\documentclass[b5paper,twoside,headrule,reqno,11pt,]{amsart}
\headsep=1truecm \headheight=0pt \topmargin=0pt \oddsidemargin=40pt
\evensidemargin=25pt \textwidth=13.5truecm \textheight=19.5truecm
\def\thefootnote{}
\footskip=10mm\parskip 0.2cm\addtocounter{page}{0}
\setlength{\arraycolsep}{1pt}

scaled \magstep0

 
\usepackage{amssymb}
\usepackage{url}
\overfullrule 5pt

\newcommand{\Q}{{\mathbb{Q}}}
\newcommand{\F}{{\mathbb{F}}}
\newcommand{\C}{{\mathbb{C}}}
\newcommand{\Z}{{\mathbb{Z}}}

\newcommand{\K}{{\mathbb{K}}}

\newcommand{\bB}{{\boldsymbol{B}}}
\newcommand{\bG}{{\boldsymbol{G}}}
\newcommand{\bH}{{\boldsymbol{H}}}
\newcommand{\bL}{{\boldsymbol{L}}}
\newcommand{\bT}{{\boldsymbol{T}}}
\newcommand{\bU}{{\boldsymbol{U}}}
\newcommand{\bW}{{\boldsymbol{W}}}

\newcommand{\fS}{{\mathfrak{S}}}

\newcommand{\cC}{{\mathcal{C}}}
\newcommand{\cE}{{\mathcal{E}}}

\newcommand{\cH}{{\mathcal{H}}}

\newcommand{\cN}{{\mathcal{N}}}
\newcommand{\cO}{{\mathcal{O}}}

\newcommand{\ii}{{\textrm{i}}}
\newcommand\Irr{\operatorname{Irr}}
\newcommand\Unip{\operatorname{Unip}}
\newcommand\Ind{\operatorname{Ind}}

\newcommand{\RTG}{{R_\bT^\bG}}

\newtheorem{thm}{Theorem}[section]
\newtheorem{cor}[thm]{Corollary}
\newtheorem{prop}[thm]{Proposition}
\newtheorem{lem}[thm]{Lemma}

\theoremstyle{definition}

\newtheorem{exmp}[thm]{Example}
\newtheorem{abs}[thm]{}

\theoremstyle{remark}
\newtheorem{rem}[thm]{Remark}

\numberwithin{equation}{section}
\numberwithin{table}{section}

\renewcommand{\leq}{\leqslant}
\renewcommand{\geq}{\geqslant}

\begin{document}

\def\evenhead{{\protect\centerline{\textsl{\large{Meinolf Geck}}}\hfill}}

\def\oddhead{{\protect\centerline{\textsl{\large{On the computation
of character values ...}}}\hfill}}

\pagestyle{myheadings} \markboth{\evenhead}{\oddhead}

\thispagestyle{empty} 

\begin{center}
\Large {\bf On the computation of character values for finite 
Chevalley groups of exceptional type}
\end{center}

\begin{center}
\large Meinolf Geck

\textit{ \small }

\end{center}

\footnotetext{ \emph{2000 Mathematics Subject
Classfication.}{Primary 20C33; Secondary 20G40.}}

\renewcommand{\thefootnote}{\fnsymbol{footnote}}

\begin{center}
\begin{minipage}{5in}
\noindent {\bf Abstract:} We discuss various computational issues 
around the problem of determining the character values of finite
Chevalley groups, in the framework provided by Lusztig's theory of
character sheaves. Some of the remaining open questions (concerning 
certain roots of unity) for the cuspidal unipotent character sheaves 
of groups of exceptional type are resolved.
\\
\noindent{\bf Keywords:} Groups of Lie type, Deligne--Lusztig characters,
character sheaves. 
\end{minipage}
\end{center}

\section{Introduction} \label{sec0}

Let $p$ be a prime and $k=\overline{\F}_p$ be an algebraic closure of
the field with $p$ elements. Let $\bG$ be a connected reductive algebraic
group over $k$ and assume that $\bG$ is defined over the finite subfield
$\F_q\subseteq k$, where $q$ is a power of~$p$. Let $F\colon \bG\rightarrow 
\bG$ be the corresponding Frobenius map. Then the group of rational points 
$\bG^F=\bG(\F_q)$ is called a ``finite group of Lie type''. (For the basic 
theory of these groups, see \cite{Ca2}, \cite{rDiMi}, \cite{gema}.) We are 
concerned with the problem of computing the values of the irreducible 
characters of $\bG^F$. The work of Lusztig \cite{LuB}, \cite{Lintr}, 
\cite{L7}, \cite{Lu18} has led to a general program for solving this 
problem. In this framework, one seeks to establish certain identities of 
class functions on $\bG^F$ of the form $R_x=\zeta\chi_A$, where $R_x$ 
denotes an ``almost character'' (that is, an explicitly defined linear 
combination of the irreducible characters of $\bG^F$) and $\chi_A$ denotes 
the characteristic function of a suitable $F$-invariant ``character sheaf'' 
$A$ on $\bG$; here, $\zeta$ is an algebraic number of absolute value~$1$. 
This program has been successfully carried out in many cases (see 
\cite[\S 2.7]{gema} for a survey), but not in complete generality. 

This paper is part of an ongoing project (involving various authors) 
to complete the program of establishing identities $R_x=\zeta\chi_A$ 
as above including the explicit determination of the scalars $\zeta$. 
We shall solve this problem here in a number of previously open cases for 
$\bG$ simple of exceptional type. 

The above identities $R_x=\zeta\chi_A$ take a particularly striking form 
when $A$ is a cuspidal character sheaf and $\bG$ is a simple algebraic 
group, and this is our main focus here. In that case, the set $\{g\in
\bG^F\mid \chi_A(g)\neq 0\}$ is contained in a single $F$-stable 
conjugacy class $\Sigma$ of~$\bG$; furthermore, the values of $\chi_A$ are
determined by the choice of an element $g_1\in\Sigma^F$ and a certain 
irreducible character $\psi$ of the finite group of components
$A_\bG(g_1)=C_\bG(g_1)/C_\bG^\circ(g_1)$. By \cite[0.4]{L7}, the general 
case can be reduced to the ``cuspidal'' case assuming that the 
cohomological induction functor~$R_\bL^\bG$ (see \cite{Lu1a}, 
\cite[\S 9.1]{rDiMi}) is explicitly known, where $\bL\subseteq \bG$ is any 
$F$-stable Levi subgroup of a not necessarily $F$-stable parabolic 
subgroup of~$\bG$.

A crucial ingredient in this whole program is the problem of identifying 
``good'' choices for $g_1\in \Sigma^F$ as above. If $\Sigma$ is a 
unipotent class, then one can use the concept of ``split'' elements; see 
Beynon--Spaltenstein \cite{BeSp} and Shoji \cite{S6}. In general, there 
are a few rare cases where one can single out a representative $g_1\in
\Sigma^F$ simply by looking at the order or the structure of the 
centralisers. At the other extreme, all $g_1\in \Sigma^F$ may have the 
same centraliser order. In such cases, we use the following techniques:
1)~Steinberg's cross-section \cite{St} for regular elements or, more 
generally, Lusztig's ``$C$-small'' classes \cite{Lfrom}; 2)~rationality 
properties of characters; 3)~powermaps and congruence conditions for 
character values. 

Our aim is to achieve something close to the famous Cambridge 
{\sf ATLAS}~\cite{atl}, where no explicit representatives of conjugacy 
classes are given, but the classes can be almost uniquely identified by
some formal properties, like~2) or~3). For many applications of character
theory (for examples, see \cite[App.~A.10]{gema}) this is entirely 
sufficient. In Section~\ref{sec3}, we develop some techniques that will 
help us identifying such ``good'' choices for $g_1\in\Sigma^F$. (Note, 
however, that there does not yet seem to be a universally applicable 
definition of what ``good'' should mean.)

But first we need to address another essential point: the explicit
evaluation of the Deligne--Lusztig characters $R_\bT^\bG(\theta)$, 
where $\bT$ is any $F$-stable maximal torus of~$\bG$ and $\theta
\in\Irr(\bT^F)$. There is a character formula in \cite{DeLu} which 
reduces that problem to the computation of Green functions. The formula 
involves some technical issues of a purely group-theoretical nature. It 
will be known to the experts how to deal with this, but the details are 
not readily available so we include them here in Sections~\ref{sec1}
and \ref{sec2} (following, and slightly refining L\"ubeck~\cite{lphd}). 
We hope that this will be useful as a reference in other contexts 
as well.

Finally, in Sections~\ref{sece6}--\ref{secf4}, we explicitly deal with 
cuspidal character sheaves in groups of types $F_4$, $E_6$, $E_7$. Much of 
this is inspired by Lusztig \cite{L3} (values of characters on unipotent 
elements) and Shoji \cite{Sclass} (values of unipotent characters for 
classical groups); an additional complication here is that unipotent
characters of exceptional groups may have non-rational values. We heavily 
rely on computer calculations, where we use Michel's 
extremely powerful version of {\sf CHEVIE} \cite{jmich}. In addition to
the general functions concerning Weyl groups, reflection subgroups and
their character tables, there are programs in \cite{jmich} for producing 
information about the unipotent characters of $\bG^F$ (degrees, Fourier 
matrices etc.), and for computing (generalised) Green functions, which 
turn out to be particularly helpful for our purposes here. Combined with 
previous work by a number of authors (for precise references see 
Sections~\ref{sece6}--\ref{secf4}), we can now state:

\textit{Let $\bG$ be simple of type $G_2$, $F_4$, $E_6$ or $E_7$. Then 
the scalars $\zeta$ in the identities $R_x=\zeta \chi_A$ for cuspidal 
unipotent character sheaves $A$ are explicitly known. In all cases 
considered, there is a ``good'' choice of $g_1\in\Sigma^F$ such that 
$\zeta=1$.}

As far as simple groups of exceptional type are concerned, what remains to 
be done is to deal with a number of cuspidal character sheaves for $\bG$
of type $E_8$ (which are all unipotent) and with the non-unipotent cuspidal 
character sheaves for $\bG$ of type $E_6$ and $E_7$. We hope to address
these questions elsewhere.

\smallskip
\begin{center} {\footnotesize
\begin{tabular}{lp{270pt}r} \multicolumn{3}{c}{\bf Table of Contents} \\
1 & Introduction \dotfill & \pageref{sec0}\\ 2 & Fusion of $F$-stable
maximal tori \dotfill & \pageref{sec1}\\ 3 & On the evaluation of 
Deligne--Lusztig characters \dotfill & \pageref{sec2}\\ 4 & Characteristic 
functions and conjugacy classes \dotfill & \pageref{sec3}\\ 5 & Cuspidal
unipotent character sheaves in type $E_6$ \dotfill & \pageref{sece6}\\
6 & Cuspidal unipotent character sheaves in type $E_7$ \dotfill & 
\pageref{sece7}\\ 7 & Cuspidal character sheaves in type $F_4$ \dotfill 
& \pageref{secf4}
\end{tabular}}
\end{center}

\smallskip
\begin{abs} {\bf Notation and conventions.} \label{sub00}
The set of (complex) irreducible characters of a finite group~$\Gamma$
is denoted by $\Irr(\Gamma)$. We work over a fixed subfield $\K\subseteq 
\C$, which is algebraic over $\Q$, invariant under complex conjugation 
and ``large enough'', that is, $\K$ contains sufficiently many roots of 
unity and $\K$ is a splitting field for $\Gamma$ and all its subgroups. 
Thus, $\chi(g)\in\K$ for all $\chi\in\Irr(\Gamma)$ and $g\in\Gamma$. 
When required, we will assume chosen an embedding of $\K$ into 
$\overline{\Q}_l$, where $\Q_l$ is the field of $l$-adic numbers for some 
prime~$l$. If $\alpha\colon \Gamma\rightarrow\Gamma$ is a group 
automorphism, we say that $g_1,g_2\in\Gamma$ are $\alpha$-conjugate if
there exists some $g\in\Gamma$ such that $g_2=g^{-1}g_1\alpha(g)$.
\end{abs}

\section{Fusion of $F$-stable maximal tori} \label{sec1}

We keep the general notation from the introduction.
Let $\bT_0$ be a maximally split torus of $\bG$, that is, $\bT_0$
is an $F$-stable maximal torus contained in an $F$-stable Borel 
subgroup $\bB\subseteq \bG$. Let $\Phi$ be the root system of $\bG$
with respect to $\bT_0$, and let $\Phi^+\subseteq \Phi$ be the set of 
positive roots determined by~$\bB$. Let $\bW:=N_\bG(\bT_0)/\bT_0$ be 
the Weyl group of $\bT_0$ and $\ell\colon \bW\rightarrow\Z_{\geq 0}$ be the
length function. We have $\bW=\langle w_\alpha \mid \alpha\in
\Phi\rangle$ where $w_\alpha\in\bW$ denotes the reflection with root 
$\alpha$. We have $\bG=\langle \bT_0,\bU_\alpha\;(\alpha\in\Phi)\rangle$
where $\bU_\alpha\subseteq \bG$ denotes the root subgroup corresponding to
$\alpha$. The Frobenius map $F$ induces a permutation $\alpha\mapsto
\alpha^\dagger$ of $\Phi$ such that $F(\bU_\alpha)=\bU_{\alpha^\dagger}$
for all $\alpha\in\Phi$. We denote by $\sigma\colon \bW\rightarrow\bW$ the 
automorphism induced by~$F$. For each $w\in \bW$, let $\dot{w} \in
N_\bG(\bT_0)$ be a representative; if $\sigma(w)=w$, then we tacitly 
assume that $F(\dot{w})=\dot{w}$. 

It is well known that the $\bG^F$-conjugacy classes of $F$-stable maximal 
tori of $\bG$ are parametrised by the $\sigma$-conjugacy classes of $\bW$. 
Given $w\in \bW$, let $g\in \bG$ be such that $\dot{w}=g^{-1}F(g)$. (The
existence of~$g$ relies on Lang's Theorem, which will be used many
times in what follows, without further explicit reference.) Then $\bT:=
g\bT_0g^{-1}$ is a corresponding $F$-stable maximal torus, unique up to 
conjugation by elements of $\bG^F$; in this situation, we also say that 
$\bT$ is ``of type~$w$''.  (See \cite[\S 2.3]{Ca2} for further details.) 
For the evaluation of Deligne--Lusztig characters, we shall need to 
relate $\bG^F$-conjugacy classes of $F$-stable maximal tori of $\bG$ to 
those in certain connected reductive subgroups of maximal rank. Since
this is crucial for the explicit computations that we need to carry out, 
we will explain the details here; see also L\"ubeck \cite{lphd}, 
\cite{Lue07}.

\begin{abs} {\bf Subsystem subgroups.} As in Carter \cite[\S 2]{C1}, we 
consider subsets $\Phi' \subseteq \Phi$ that are themselves root systems 
and are closed in the sense that, whenever $\alpha,\beta\in\Phi'$ and 
$\alpha+\beta\in \Phi$, then $\alpha+\beta\in \Phi'$. Given such a $\Phi'$,
there is a corresponding closed connected reductive subgroup $\bH'\subseteq
\bG$ generated by $\bT_0$ and the subgroups $\bU_\alpha$ for $\alpha \in 
\Phi'$. Here, $\Phi'$ is the root system of $\bH'$ with respect to $\bT_0$ 
and 
\[ \bW(\Phi'):=\langle w_\alpha\mid \alpha\in\Phi'\rangle=(N_\bG(\bT_0)\cap
\bH')/\bT_0\]
is the Weyl group of $\bH'$. Let $\Xi$ be the set of all pairs $(\Phi',w)$ 
where $\Phi'\subseteq \Phi$ is a subset as above and $w\in \bW$ is such that
$w(\alpha^\dagger)\in \Phi'$ for all $\alpha\in \Phi'$. Given $(\Phi',w)
\in~\Xi$, we form the corresponding subgroup $\bH'\subseteq \bG$ as above;
then $F(\bH')=\dot{w}^{-1}\bH'\dot{w}$; note that $\dot{w}\bU_\alpha
\dot{w}^{-1}=\bU_{w(\alpha)}$ for all $\alpha\in\Phi$. Hence, writing 
$\dot{w}=g^{-1}F(g)$ for some $g\in \bG$, the subgroup $g\bH'g^{-1}$ is 
$F$-stable and uniquely determined by $(\Phi',w)$, up to conjugation by 
an element of $\bG^F$. It is known (see Carter \cite{C1}, Deriziotis 
\cite{Dere}, Mizuno \cite{Miz}) that the $\bG^F$-conjugacy classes
of $F$-stable subgroups $g\bH'g^{-1}$ as above are parametrised by the 
pairs in $\Xi$ modulo the equivalence relation defined by: $(\Phi_1', 
w_1) \sim (\Phi_2',w_2)$ if there exists some $x\in\bW$ such that 
$x(\Phi_1')=\Phi_2'$ and $x^{-1}w_2\sigma(x)w_1^{-1}\in\bW(\Phi_1')$. 
(The above statement concerning $\bG^F$-conjugacy classes of $F$-stable 
maximal tori is a very special case of this correspondence.)
\end{abs}

\begin{abs} {\bf The relation $\sim$ on $\Xi$.} \label{rem0} For future 
reference, we briefly indicate how the relation $\sim$ comes about. (Note 
that the discussion in \cite[\S 2]{C1} assumes that the subgroup $\bH'$ 
corresponding to $\Phi'$ is itself $F$-stable, which will not always be 
the case if $\sigma\neq \mbox{id}_\bW$; furthermore, \cite[\S 2]{C1} only 
considers $\bG^F$-conjugacy for the subgroups corresponding to a fixed 
$\Phi'$.) So let $(\Phi_1',w_1)$ and $(\Phi_2',w_2)$ be pairs in $\Xi$; 
let $g_1,g_2\in\bG$ be such that $g_1^{-1}F(g_1)=\dot{w}_1$ and $g_2^{-1}
F(g_2)=\dot{w}_2$. We have the corresponding $F$-stable subgroups 
$g_i\bH_i'g_i^{-1}$, where $\bH_i':=\langle \bT_0,\bU_\alpha\; (\alpha
\in \Phi_i')\rangle$ for $i=1,2$. Suppose now that $g_1\bH_1'
g_1^{-1}$ and $g_2\bH_2'g^{-1}$ are conjugate in $\bG^F$; so 
$\tilde{g}g_1\bH_1'g_1^{-1} \tilde{g}^{-1}=g_2\bH_2'g_2^{-1}$ for
some $\tilde{g}\in \bG^F$. Setting $\hat{g}:=g_2^{-1}\tilde{g}g_1$, we 
have $\hat{g}\bH_1'\hat{g}^{-1}=\bH_2'$ and there exists some 
$h_2\in\bH_2'$ such that $\hat{g}\bT_0\hat{g}^{-1}=h_2\bT_0h_2^{-1}$. 
Then $n:=h_2^{-1}\hat{g}\in N_\bG(\bT_0)$ and $n\bH_1'n^{-1}=\bH_2'$. 
Hence, $x(\Phi_1')=\Phi_2'$, where $x$ is the image of $n$ in~$\bW$. 
We now have $g_1=\tilde{g}^{-1}g_2\hat{g}=\tilde{g}^{-1}g_2h_2n$ and 
a straightforward computation yields that 
\[\dot{w}_1=g_1^{-1}F(g_1)=\bigl(n^{-1}\dot{w}_2F(n)\bigr)\bigl(F(n)^{-1}
\dot{w}_2^{-1} h_2^{-1}\dot{w}_2F(h_2)F(n)\bigr).\]
We have $F(\bH_2')=\dot{w}_2^{-1}\bH_2'\dot{w}_2$ and so 
$\dot{w}_2^{-1}h_2^{-1}\dot{w}_2F(h_2)\in F(\bH_2')$.
Furthermore, $F(n)^{-1}F(\bH_2')F(n)=F(n^{-1}\bH_2'n)=F(\bH_1')=
\dot{w}_1^{-1}\bH_1'\dot{w}_1$. Hence, we obtain $\dot{w}_1=
n^{-1}\dot{w}_2F(n)\dot{w}_1^{-1}h_1\dot{w}_1$ for some $h_1\in \bH_1'$.
Thus, $n^{-1}\dot{w}_2F(n)\dot{w}_1^{-1}\in N_G(\bT_0)\cap\bH_1'$ 
and so $x^{-1}w_2\sigma(x)w_1^{-1}\in \bW(\Phi_1')$, as desired. 
Conversely, if $(\Phi_1',w_1) \sim (\Phi_2',w_2)$, then one needs to
run the above argument backwards.
\end{abs}

\begin{abs} \label{rem1} 
Let us fix a pair $(\Phi',w)\in\Xi$. Note that $(\Phi',w)\sim (\Phi',
uw)$ for all $u\in \bW(\Phi')$. Now, by \cite[Lemma~1.9]{LuB}, the coset
$\bW(\Phi')w$ contains a unique element of minimal length; let us denote 
this element by~$d$. Thus, when considering equivalence classes of pairs 
$(\Phi',d)\in \Xi$, we may assume without loss of generality that $d$ 
has minimal length in the coset $\bW(\Phi')d$. (Note that L\"ubeck 
\cite{lphd} does not make this assumption on~$d$.) We define a new 
Frobenius map $F'\colon \bG \rightarrow\bG$ by $F'(g):=\dot{d}F(g)
\dot{d}^{-1}$ for $g\in\bG$. Then $F'(\bH')= \bH'$, where $\bH'=\langle 
\bT_0,\bU_\alpha\; (\alpha\in\Phi') \rangle$. The map induced by $F'$
on $\bW$ is given by 
\begin{equation*}
\sigma'\colon \bW\rightarrow \bW,\qquad w\mapsto d\sigma(w)d^{-1}.\tag{a}
\end{equation*}
Clearly, $\bT_0$ is also an $F'$-stable maximal torus of $\bH'$. We claim
that
\begin{equation*}
\mbox{$\bT_0$ is a maximally split torus of $\bH'$ with respect to $F'$}.
\tag{b}
\end{equation*}
This is seen as follows. The group $\bB'=\langle \bT_0,\bU_\alpha\;
(\alpha\in\Phi^+\cap \Phi')\rangle$ is a Borel subgroup of $\bH'$ (see 
\cite[\S 3.5]{Ca2}). Since $\bT_0\subseteq \bB'$, it is sufficient to
show that $\bB'$ is $F'$-stable. For this purpose, let $\alpha\in
\Phi^+\cap \Phi'$. By \cite[Lemma~1.9]{LuB}, we have $d^{-1}(\alpha)\in
\Phi^+$ and so $\dot{d}^{-1}\bU_\alpha\dot{d}=\bU_{d^{-1}(\alpha)}\subseteq 
\bB=F(\bB)$. Consequently, we have $\bU_\alpha\subseteq \dot{d}
F(\bB) \dot{d}^{-1}=F'(\bB)$. Since also $\bT_0=F'(\bT_0)\subseteq 
F'(\bB)$, we conclude that $\bB'\subseteq F'(\bB)$. Furthermore,
$\bB'\subseteq \bH'=F'(\bH')$ and so $\bB'\subseteq F'(\bB) \cap 
F'(\bH')=F'(\bB\cap \bH')=F'(\bB')$. Hence, we must have $\bB'=
F'(\bB')$, as claimed. Thus, if $\Delta'$ is the unique set of simple
roots in $\Phi^+\cap\Phi'$, then we have $\bW(\Phi')=\langle S'\rangle$ 
where 
\begin{equation*}
S':=\{w_\alpha\mid \alpha\in\Delta'\} \qquad\mbox{and}\qquad \sigma'(S')=S'.
\tag{c}
\end{equation*}
In particular, $(\bW(\Phi'),S')$ is a Coxeter system and $\sigma'\bigl(
\bW(\Phi')\bigr)=\bW(\Phi')$. 
\end{abs}

\begin{abs} \label{rem2} In the setting of \S \ref{rem1}, where
$(\Phi,d)\in\Xi$, let us also fix an element $g\in \bG$ such that $g^{-1}
F(g)=\dot{d}$. Then $\bT_d:=g\bT_0g^{-1}\subseteq \bG$ is an $F$-stable 
maximal torus of type~$d$. Furthermore, if $\bH'=\langle \bT_0,
\bU_\alpha\;(\alpha\in\Phi')\rangle$ and $\bH_d:=g\bH'g^{-1}$, then
$\bT_d\subseteq \bH_d$ and $F(\bH_d)=\bH_d$. Now Remark~\ref{rem1}(b)
immediately implies that $\bB_d:=g\bB'g^{-1}$ is an $F$-stable Borel 
subgroup of $\bH_d$ and so $\bT_d$ is a maximally split torus of $\bH_d$.
Let $\bW_d:=N_{\bH_d}(\bT_d)/\bT_d$ be the Weyl group of $\bH_d$. We 
denote by $\sigma_d\colon \bW_d\rightarrow \bW_d$ the automorphism 
induced by~$F$. Then the conjugation map $\gamma_g \colon 
\bG\rightarrow \bG$, $x \mapsto g^{-1}xg$, induces an embedding
$\bar{\gamma}_g\colon \bW_d \hookrightarrow \bW$, where 
\[ \bW(\Phi')=\bar{\gamma}_g(\bW_d)\subseteq \bW \qquad\mbox{and}
\qquad \bar{\gamma}_g\circ \sigma_d=\sigma' \circ \bar{\gamma}_g.\] 
Via the isomorphism $\bar{\gamma}_g\colon \bW_d\rightarrow\bW(\Phi')$, 
the $\sigma_d$-conjugacy classes of $\bW_d$ correspond to the 
$\sigma'$-conjugacy classes of $\bW(\Phi')$. Thus, the $\bH_d^F$-conjugacy 
classes of $F$-stable maximal tori of $\bH_d$ are parametrised by the 
$\sigma'$-conjugacy classes of $\bW(\Phi')$. More precisely, if $w'\in 
\bW(\Phi')$, then an $F$-stable maximal torus $\bT' \subseteq \bH_d$ 
of type~$w'$ (inside $\bH_d$) is given by $\bT':=h\bT_dh^{-1}$ where 
$h\in \bH_d$ is such that $h^{-1}F(h)=\gamma_g^{-1}(\dot{w}')=
g\dot{w}'g^{-1}$. 
\end{abs}

The following result describes the fusion from $F$-stable maximal tori
in $\bH_d$ to $F$-stable maximal tori in $\bG$; see also 
L\"ubeck \cite[\S 4.1(2)]{lphd} (but note that slightly different
conventions and assumptions are used in \cite{lphd}). 

\begin{lem} \label{lem1} In the above setting, let $\bT'\subseteq \bH_d$
be an $F$-stable maximal torus of type $w'\in\bW(\Phi')$. Then $\bT'
\subseteq \bG$ is an $F$-stable maximal torus of type $w'd\in\bW$.
In particular, a maximally split torus of $\bH_d$ is of type~$d$
(relative to $\bG$).
\end{lem}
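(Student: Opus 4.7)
The statement is essentially a direct computation once the parametrisation recalled in \S\ref{rem2} is unwound, so my plan is to do exactly that and identify where care is needed.

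First I would translate the hypothesis: by \S\ref{rem2}, $\bT'$ of type $w'$ inside $\bH_d$ means $\bT' = h\bT_d h^{-1}$ for some $h \in \bH_d$ with $h^{-1}F(h) = g\dot{w}'g^{-1}$. Since $\bT_d = g\bT_0g^{-1}$, substituting gives $\bT' = (hg)\bT_0(hg)^{-1}$, so the element $x := hg \in \bG$ conjugates the reference torus $\bT_0$ to $\bT'$. To identify the $\bG$-type of $\bT'$, I then compute
\[
x^{-1}F(x) = g^{-1}\bigl(h^{-1}F(h)\bigr)F(g) = g^{-1}\bigl(g\dot{w}'g^{-1}\bigr)F(g) = \dot{w}'\bigl(g^{-1}F(g)\bigr) = \dot{w}'\dot{d}.
\]
Choosing $\dot{w}'\dot{d}$ as a representative of $w'd \in \bW$, this exhibits $\bT'$ as an $F$-stable maximal torus of $\bG$ of type $w'd$, proving the main assertion. (The type is a well-defined element of $\bW$, independent of the choice of representative in $N_\bG(\bT_0)$.)

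For the ``In particular'' clause, I would apply the first part with $w' = 1$: a maximally split torus of $\bH_d$ with respect to $F$ corresponds under $\bar{\gamma}_g$ to the trivial element of $\bW(\Phi')$ (by Remark~\ref{rem1}(b) applied inside $\bH_d$), hence has type $1 \cdot d = d$ relative to $\bG$.

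I do not foresee a serious obstacle; the computation is a one-line manipulation. The only point to watch is the left/right ordering in the product $\dot{w}'\dot{d}$, which is forced by the fact that the element $\dot{w}'$ is pulled back to $\bT_0$ by conjugation through $g$ (producing $g\dot{w}'g^{-1}$ in the definition of the type inside $\bH_d$), so after clearing the conjugation it lands to the \emph{left} of $\dot{d} = g^{-1}F(g)$ in $x^{-1}F(x)$.
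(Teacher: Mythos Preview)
Your proposal is correct and follows essentially the same approach as the paper: set $x=hg$, compute $x^{-1}F(x)=\dot{w}'\dot{d}$ directly from $h^{-1}F(h)=g\dot{w}'g^{-1}$ and $g^{-1}F(g)=\dot{d}$, and read off the type. The paper's proof is this same one-line computation, and the ``In particular'' is handled identically by taking $w'=1$.
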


\begin{proof} Recall that $g^{-1}F(g)=\dot{d}$ and that $\bT_d=g\bT_0 
g^{-1}$ is a maximally split torus of $\bH_d$. As above, let $h\in\bH_d$ 
be such that $\bT'=h\bT_dh^{-1}$ and $h^{-1}F(h)=\gamma_g^{-1}(\dot{w}')=
g\dot{w}' g^{-1}$. Then $\bT'=hg\bT_0g^{-1} h^{-1}$ and 
$(hg)^{-1} F(hg) \in N_\bG(\bT_0)$. Now 
\[(hg)^{-1}F(hg)=g^{-1}h^{-1}F(h)F(g)=g^{-1}(g\dot{w}'g^{-1})F(g)=
\dot{w}'\dot{d}.\]
Hence, $\bT'$ is an $F$-stable maximal torus of type $\dot{w}d$ in $\bG$.
\end{proof}

\begin{exmp} \label{exg2a} Let $\bG$ be simple of type $G_2$; then 
$\sigma=\mbox{id}_\bW$ and the permutation $\alpha\mapsto \alpha^\dagger$
is the identity. Let $\Delta=\{\alpha_1,\alpha_2\}$ be the set of simple 
roots in $\Phi^+$, where $\alpha_1$ is long and $\alpha_2$ is short. 
There are two particular subsystems $\Phi'\subseteq \Phi$ that occur in 
the classification of cuspidal character sheaves on $\bG$ (see the 
proof of \cite[Prop.~20.6]{L2d}). Up to $\bW$-conjugacy, these are 
$\Phi_0'$ of type $A_1\times A_1$, spanned by $\{\alpha_2,\alpha_0\}$, 
and $\Phi_0'' \subseteq \Phi$ of type $A_2$, spanned by $\{\alpha_1,
\alpha_0\}$. (Here, $\alpha_0\in\Phi$ denotes the unique root of maximal 
height in $\Phi$.) There is only one equivalence class of pairs
$(\Phi',w)\in\Xi$ under~$\sim$ where $\Phi'=\Phi_0'$; a representative
is given by $(\Phi_0',d_1)$ with $d_1=1_\bW$. There are two 
equivalence classes of pairs $(\Phi',w)\in\Xi$ where $\Phi'=\Phi_0''$; 
representatives are given by $(\Phi_0'',d_1)$ with $d_1=1_\bW$, and
by $(\Phi_0'',d_2)$ with $d_2=w_{\alpha_2}$ (and $d_2$ has minimal 
length in $\bW(\Phi_0'')d_2$). The information is summarised in the
following table (which is a model for the tables in later sections).
\[\begin{array}{c@{\hspace{15pt}}c@{\hspace{15pt}}
l@{\hspace{15pt}}c@{\hspace{15pt}}c} \hline \Phi' & 
\Delta' & d_i & \mbox{permutation} & \mbox{$\sigma_i'$-classes} \\ \hline 
A_1{\times} A_1 & \alpha_2,2\alpha_1{+}3\alpha_2&  d_1=1_\bW & () & 4
\\\hline A_2 & \alpha_1,\alpha_1{+}3\alpha_2&  d_1=1_\bW & () 
& 3 \\ & & d_2=w_{\alpha_2} &  (1,2) & 3 \\\hline
\end{array}\]
Here, $\Delta'$ is the set of simple roots in $\Phi^+\cap\Phi'$. 
Furthermore, we define $\sigma_i'\in \mbox{Aut}(\bW(\Phi'))$ by $\sigma_i'
(w)= d_iwd_i^{-1}$ for $w\in\bW$. Then $\sigma_i'$ induces a permutation 
of the simple reflections in $\bW(\Phi')$; see Remark~\ref{rem1}. This 
permutation, in cycle notation, is indicated in the fourth column of 
the table; note that this permutation refers to the simple roots
in~$\Delta'$, not to those in $\Delta$. The last column contains the
number of $\sigma_i'$-conjugacy classes of $\bW(\Phi')$.

Now consider the fusion of $F$-stable maximal tori described by
Lemma~\ref{lem1}. In each case, we need to work out representatives 
of the $\sigma_i'$-conjugacy classses of $\bW(\Phi')$, multiply these 
by $d_i$ and identify the conjugacy class of $\bW$ to which the new 
element belongs. Here, of course, this can be done by hand, but for 
larger~$\bW$, such computations are conveniently done using the computer 
algebra system {\sf CHEVIE} \cite{chevie}, \cite{jmich}, for example. 
\end{exmp}

\begin{abs} {\bf Centralisers of semisimple elements.} \label{rem3} Let 
$C$ be an $F$-stable conjugacy class of semisimple elements of $\bG$. 
It is well-known that $C\cap \bT_0$ is non-empty and a single orbit under 
the action of $\bW$; furthermore, $C_\bG^\circ(t)$, for $t\in C\cap\bT_0$, 
is a connected reductive subgroup of the type considered above (see,
e.g., \cite[\S 3.5, \S 3.7]{Ca2}). Thus, we are led to consider the 
subset $\Xi^\circ\subseteq \Xi$ consisting of all pairs $(\Phi',w)\in\Xi$, 
for which there exists some $t\in \bT_0$ such that $\Phi'=\{\alpha \in
\Phi \mid \alpha(t)=1\}$ and $F(t)= \dot{w}^{-1}tw$. Then the 
$\bG^F$-conjugacy classes of subgroups of the form $C_\bG^\circ(s)$, 
where $s\in \bG^F$ is semisimple, are parametrised by the pairs in
$\Xi^\circ$ modulo the equivalence relation $\sim$ on~$\Xi^\circ$. (See 
again \cite{C1}, \cite{Dere}, \cite{Miz}.) Given $(\Phi',w)\in\Xi^\circ$, 
a corresponding semisimple element $s\in\bG^F$ is obtained as follows.
Let $t\in \bT_0$ be such that $\Phi'=\{\alpha \in\Phi\mid \alpha(t)=1\}$ 
and $F(t)=\dot{w}^{-1}tw$; then $C_\bG^\circ(t)=\langle \bT_0, \bU_\alpha
\; (\alpha \in \Phi') \rangle$. Let $g\in \bG$ be such that $g^{-1}F(g)=
\dot{w}$ and set $s:=gtg^{-1}$. Then $F(s)=s$ and $C_\bG^\circ(s)= 
gC_\bG^\circ(t) g^{-1}$.

Note that, if $d\in\bW$ has minimal length in the coset $\bW(\Phi')w$,
then $w=w'd$ for some $w'\in \bW(\Phi')$ and we still have $F(t)=
\dot{d}^{-1} t\dot{d}$ (since $\dot{w}'\in C_\bG(t)$). Hence, again, we 
may assume without loss of generality that $w=d$ and so the discussions
in Remarks~\ref{rem1}, \ref{rem2}, and Lemma~\ref{lem1} apply. The 
subsystems $\Phi'\subseteq \Phi$ which can arise at all as the root
system of $C_\bG^\circ(t)$, for some $t\in\bT_0$, are characterised in 
\cite[\S 2.3]{Dere}; given such a subset $\Phi'\subseteq \Phi$, the 
condition of whether there is some $w\in\bW$ such that $(\Phi',w)\in
\Xi^\circ$ may also depend on the isogeny type of $\bG$ and the 
$\F_q$-rational structure on $\bG$; see \cite[\S 5]{C1} and 
\cite[Chap.~2]{Dere} for further details.
\end{abs}

\section{On the evaluation of Deligne--Lusztig characters} \label{sec2}

Let $\bT\subseteq \bG$ be an $F$-stable maximal torus and $\theta
\in\Irr(\bT^F)$ be an irreducible character. Then we have a corresponding 
virtual character $\RTG(\theta)$ of $\bG^F$, as defined by
Deligne--Lusztig \cite{DeLu} (see also \cite[Chap.~7]{Ca2}). These 
virtual characters span a significant subspace of the space of all class 
functions on $\bG^F$ (see the introduction of \cite{LuB} and 
\cite[Cor.~2.7.13]{gema} for a more precise measure of what 
``significant'' means). It is known that, if $u\in\bG^F$ is unipotent, 
then $Q_\bT^\bG(u):=\RTG\bigl(\theta\bigr)(u)\in\Z$ does not depend 
on $\theta$; the function $u\mapsto Q_\bT^\bG(u)$ is called a {\it Green 
function}. We now have the following important character formula. 

Let $\tilde{g}\in\bG^F$ and write $\tilde{g}=su=us$, where $s\in \bG^F$ 
is semisimple and $u\in \bG^F$ is unipotent. Then, setting $\bH_s:=
C_{\bG}^\circ(s)$, we have 
\[ \RTG\bigl(\theta\bigr)(\tilde{g})=\frac{1}{|\bH_s^F|}\sum_{x\in\bG^F\,:
\,x^{-1}sx\in\bT} Q_{x\bT x^{-1}}^{\bH_s}(u) \,\theta(x^{-1}sx).\]
Note that, firstly, $\bH_s$ is connected and reductive; secondly, 
if $x^{-1} sx \in\bT^F$, then $x\bT x^{-1}$ is an $F$-stable maximal
torus contained in $\bH_s$; furthermore, $u$ is known to belong to $\bH_s$.
(See \cite[4.2]{DeLu} or \cite[7.2.8]{Ca2} for further details.) In
particular, the formula shows that all values of $\RTG(\theta)$ belong
to the field $\Q(\theta(t)\mid t\in\bT^F)$. We also see that, if $s$ is
not conjugate in $\bG^F$ to an element of $\bT^F$, then 
$\RTG\bigl(\theta\bigr)(g)=0$. 

We now explain how the above formula can be evaluated explicitly; for this
purpose, we need to \\
$\mbox{}\quad$ (1) know the values of the Green functions 
of $\bH_s$, \\
$\mbox{}\quad$ (2) deal with the sum over all $x\in\bG^F$ such that 
$x^{-1}sx\in\bT$.\\
As far as (1) is concerned, see the surveys in 
\cite[Chap.~13]{rDiMi} and \cite[\S 2.8]{gema}. In any case, for
$\bG$ simple of exceptional type, explicit tables are known by 
Beynon--Spaltenstein \cite{BeSp} and Shoji \cite{Sh82}. (The fact that
these tables remain valid whenever $p$ is a ``good'' prime for $\bG$ 
follows from \cite[Theorem~5.5]{S3}; see also \cite{ekay} for 
``bad''~$p$.) The tables can be obtained via the function {\tt ICCTable} 
of Michel's version of {\sf CHEVIE} \cite{jmich}. As far as (2) is 
concerned, the following result provides a first simplification.

\begin{lem}[See \protect{\cite[2.2.23]{gema}}] \label{lem2} In the above
setting, assume that $s$ is conjugate in $\bG^F$ to an element in $\bT^F$.
Let $\bT_1,\ldots,\bT_m$ be representatives of the $\bH_s^F$-conjugacy 
classes of $F$-stable maximal tori of $\bH_s$ that are conjugate in $\bG^F$ 
to~$\bT$. For each~$i$, let $\tilde{g}_i\in \bG^F$ be such that $\bT_i=
\tilde{g}_i\bT \tilde{g}_i^{-1}$. Then 
\[ \RTG\bigl(\theta\bigr)(\tilde{g})=\sum_{1\leq i\leq m} 
Q_{\bT_i}^{\bH_s}(u) \frac{1}{|\bW(\bH_s,\bT_i)^F|}\sum_{y\in
\bW(\bG,\bT_i)^F}\theta(\tilde{g}_i^{-1}\dot{y}^{-1}s\dot{y}\tilde{g}_i)\]
where $\bW(\bG,\bT_i)=N_{\bG}(\bT_i)/\bT_i$ and $\bW(\bH_s,\bT_i)=
N_{\bH_s}(\bT_i)/\bT_i$.
\end{lem}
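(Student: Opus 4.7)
The idea is to regroup the sum $S:=\sum_{x\in\bG^F,\ x^{-1}sx\in\bT} Q_{x\bT x^{-1}}^{\bH_s}(u)\,\theta(x^{-1}sx)$ of the character formula according to the $\bH_s^F$-conjugacy class of the $F$-stable maximal torus $x\bT x^{-1}$. First I would observe that for any $x$ in the index set, $x\bT x^{-1}$ is $F$-stable (since $x\in\bG^F$) and contains $s$, hence lies in $C_\bG^\circ(s)=\bH_s$, and it is also $\bG^F$-conjugate to $\bT$ by construction. So $x\bT x^{-1}$ is $\bH_s^F$-conjugate to exactly one $\bT_i$. Note too that $s$ lies in the centre of $\bH_s$, hence in every maximal torus of $\bH_s$, so $s\in\bT_i$ and therefore $\tilde{g}_i^{-1}s\tilde{g}_i\in\bT$ for each $i$.

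Next I would parametrise the set $A_i:=\{x\in\bG^F\mid x^{-1}sx\in\bT,\ x\bT x^{-1}\text{ is }\bH_s^F\text{-conjugate to }\bT_i\}$ by the map
\[\bH_s^F\times N_\bG(\bT)^F\longrightarrow A_i,\qquad (h,n)\mapsto h\tilde{g}_in,\]
and show it is surjective with every fibre of cardinality $|N_{\bH_s}(\bT_i)^F|$. Surjectivity comes from writing $x\bT x^{-1}=h\bT_ih^{-1}$ for some $h\in\bH_s^F$ and setting $n:=\tilde{g}_i^{-1}h^{-1}x$, which is automatically in $N_\bG(\bT)^F$. The fibre count arises because such an $h$ is only determined up to right multiplication by the stabiliser of $\bT_i$ in $\bH_s^F$, namely $N_{\bH_s}(\bT_i)^F$, and once $h$ is fixed the element $n$ is forced.

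On each $A_i$ the summand simplifies considerably. Since $h\in C_\bG(s)$, one has $x^{-1}sx=n^{-1}(\tilde{g}_i^{-1}s\tilde{g}_i)n$, independent of $h$, and $Q_{h\bT_ih^{-1}}^{\bH_s}(u)=Q_{\bT_i}^{\bH_s}(u)$ because Green functions depend only on the $\bH_s^F$-class of the torus. Factoring the $h$-sum out through the parametrisation therefore reduces the $A_i$-contribution to
\[\frac{|\bH_s^F|}{|N_{\bH_s}(\bT_i)^F|}\,Q_{\bT_i}^{\bH_s}(u)\sum_{n\in N_\bG(\bT)^F}\theta(n^{-1}\tilde{g}_i^{-1}s\tilde{g}_in).\]

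Finally I would pass from the $N_\bG(\bT)^F$-sum to a $\bW(\bG,\bT)^F$-sum: the inner expression depends only on the coset $n\bT^F$ because $\bT^F$ is abelian, and Lang's theorem applied to the connected torus $\bT$ gives $N_\bG(\bT)^F/\bT^F\cong \bW(\bG,\bT)^F$, producing a factor of $|\bT^F|$. Combining this with the analogous identity $|N_{\bH_s}(\bT_i)^F|=|\bT_i^F|\cdot|\bW(\bH_s,\bT_i)^F|$ inside $\bH_s$ and the equality $|\bT_i^F|=|\bT^F|$ (since $\tilde{g}_i\in\bG^F$), then dividing by $|\bH_s^F|$ as prescribed by the character formula, yields the claimed expression; the final cosmetic step is to rewrite the $w$-sum over $\bW(\bG,\bT)^F$ as a sum over $\bW(\bG,\bT_i)^F$ via conjugation by $\tilde{g}_i$. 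The main bookkeeping challenge is coordinating these several applications of Lang's theorem and keeping the fibre counts compatible with the Weyl-group identifications; once this is set up correctly, no individual step is difficult.
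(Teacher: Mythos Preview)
Your argument is correct and is precisely the standard double-counting proof of this formula. The paper does not give its own proof of this lemma but simply cites \cite[2.2.23]{gema}; the argument there proceeds in exactly the same way you outline, regrouping the Deligne--Lusztig character formula according to the $\bH_s^F$-conjugacy class of $x\bT x^{-1}$, parametrising each piece via $\bH_s^F\times N_\bG(\bT)^F$, and then passing to Weyl groups via Lang's theorem on the connected tori.
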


Now note that the subgroup $\bT^F\subseteq \bG^F$ is not really 
computationally accessible, and the same is true for $\bT_1^F,\ldots, 
\bT_m^F$. All we can do explicitly are computations within $\bT_0$. 
Hence, in order to proceed, we use a different model for $\RTG(\theta)$, 
as already constructed in \cite{DeLu}. Assume that $\bT$ is of type 
$w\in\bW$ and let $g_1\in \bG$ be such that $g_1^{-1}F(g_1)=\dot{w}$. 
Then define
\[\bT_0[w]:=\{t\in\bT\mid F(t)=\dot{w}^{-1}t\dot{w}\}=g_1^{-1}\bT^Fg_1
\subseteq \bG.\]
Let $\theta'\in \Irr(\bT_0[w])$
be the irreducible character defined by $\theta'(t):=\theta(g_1tg_1^{-1})$
for all $t \in \bT_0[w]$. Then we have $\RTG(\theta)=R_w^{\theta'}$, where
the right hand side is constructed directly from $(w,\theta')$ (see 
\cite[2.3.18]{gema} for further details). Thus, the virtual characters 
$\RTG(\theta)$ may equally well be defined in terms of pairs $(w,\theta')$, 
where $w\in \bW$ and $\theta'\in \Irr(\bT_0[w])$~---~and the latter set
of pairs $(w, \theta')$ is, indeed, computationally accessible.

We can now apply the results in the previous section, especially
the discussions in Remark~\ref{rem2} and Lemma~\ref{lem1}.
Let $\tilde{g}=su=us\in \bG^F$ as above and $\bH_s:=C_\bG^\circ(s)$. Let 
$\bT'\subseteq \bH_s$ be a maximally split torus and $g\in \bG$ 
be such that $\bT'=g\bT_0g^{-1}$. Then $g^{-1}F(g)\in N_\bG(\bT_0)$ and 
we denote by $d\in \bW$ the image of $g^{-1}F(g)$ in $\bW$. Let $t:=
g^{-1}sg\in \bT_0$ and $\Phi':=\{\alpha \in\Phi\mid \alpha(t)=1\}$. Then 
$F(t)=\dot{w}^{-1}t\dot{w}$ and $(\Phi',d)\in \Xi^\circ$ parametrises 
the $\bG^F$-conjugacy class of~$\bH_s$; furthermore, we have that 
$d$ has minimal length in $\bW(\Phi')d$. As in Remark~\ref{rem1}, we 
define $\sigma' \in \mbox{Aut}(\bW)$ by $\sigma'(w)=d\sigma(w)d^{-1}$ 
for $w\in\bW$; then $\sigma'\bigl(\bW(\Phi')\bigr)=\bW(\Phi')$.
\begin{itemize}
\item[($*$)] Let $w_1',\ldots,w_m'\in \bW(\Phi')$ be representatives of 
the $\sigma'$-conjugacy classes of $\bW(\Phi')$ such that $w_i'd$ is 
$\sigma$-conjugate in $\bW$ to~$w$. For each $i$ let us fix an element
$x_i\in \bW$ such that $w=x_i^{-1}w_i'd\sigma(x_i)$. 
\end{itemize}
For $1\leq i\leq m$ let $h_i\in \bH_s$ be such that $h_i^{-1}F(h_i)=
g\dot{w}_i'g^{-1}$, and set $\bT_i:=h_i\bT'h_i^{-1}\subseteq \bH_d$. 
Then, by Lemma~\ref{lem1}, $\bT_1,\ldots,\bT_m$ are maximal tori as 
required in Lemma~\ref{lem2}. For $1\leq i \leq m$ define $\bT_0[w_i'd]
\subseteq \bT_0$ analogously to $\bT_0[w]$ above; then $\bT_0[w_i'd]=
\dot{x}_i\bT_0[w]\dot{x}_i^{-1}$ (see \cite[2.3.20]{gema}). So, given
$\theta'\in \Irr(\bT_0[w])$ as above, we can define a character $\theta_i'
\in \Irr(\bT_0[w_i'd])$ by
\[ \theta_i'(t):=\theta'(\dot{x}_i^{-1}t\dot{x}_i) \qquad \mbox{ for
$t\in \bT_0[w_i'd]$}.\]
Now let $C_{\bW,\sigma}(w)=\{x\in \bW\mid xw=w\sigma(x)\}$ be the 
$\sigma$-centraliser of $w$ in $\bW$; then $\dot{x}\bT_0[w]\dot{x}^{-1}=
\bT_0[w]$ for all $x\in C_{\bW,\sigma}(w)$. Defining $C_{\bW,\sigma}
(w_i'd)$ analogously, we have $\dot{x}\bT_0[w_i'd]\dot{x}^{-1}=\bT_0
[w_i'd]$ for all $x\in C_{\bW,\sigma}(w_i'd)$. Let also 
\[C_{\bW(\Phi'),\sigma'}(w_i')=\{x\in\bW(\Phi')\mid xw_i'=w_i'
\sigma'(x)\}= \bW(\Phi')\cap C_{\bW,\sigma}(w_i'd).\]
With this notation, we can now state the following result; see also 
L\"ubeck \cite[Satz~2.1]{lphd} for a slightly different formulation.

\begin{lem} \label{lem3} In the above setting, we have $\bW(\bH_s,
\bT_i)^F\cong C_{\bW(\Phi'),\sigma'}(w_i')$ and 
\[ \sum_{y\in \bW(\bG,\bT_i)^F} \theta(\tilde{g}_i^{-1}\dot{y}^{-1}s
\dot{y}\tilde{g}_i)=\sum_{c} \theta_i'(\dot{c}^{-1}t\dot{c}) \qquad 
\mbox{for $1\leq i \leq m$},\]
where $c$ runs over all elements of $C_{\bW,\sigma}(w_i'd)$. In 
particular, if $\theta=1_\bT$ is the trivial character of $\bT^F$, then 
the above sum equals $|C_{\bW,\sigma}(w_i'd)|$ for $1\leq i \leq m$.
\end{lem}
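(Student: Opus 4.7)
The plan is to handle the two assertions in sequence, reducing each to the standard identifications for $F$-stable maximal tori (as in Remark~\ref{rem2}) together with a compatibility choice for $\tilde{g}_i$. First I would establish the isomorphism $\bW(\bH_s,\bT_i)^F\cong C_{\bW(\Phi'),\sigma'}(w_i')$ by reapplying the standard argument inside $\bH_s$. Since $\bT'=g\bT_0g^{-1}$ is a maximally split torus of $\bH_s$, Remark~\ref{rem2} shows that the Frobenius $F|_{\bH_s}$ induces $\sigma'$ on $\bW(\bH_s,\bT')$ under the isomorphism with $\bW(\Phi')$. The relation $h_i^{-1}F(h_i)=g\dot{w}_i'g^{-1}$ says that $\bT_i=h_i\bT'h_i^{-1}$ has type $w_i'$ in $\bH_s$, so the standard characterisation of the $F$-fixed points in the Weyl group of an $F$-stable maximal torus of a given type yields the claim. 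The analogous argument applied inside $\bG$ gives, with $g_i:=h_ig$ (so that $g_i^{-1}F(g_i)=\dot{w}_i'\dot{d}$ and $\bT_i\subseteq\bG$ is of type $w_i'd$ by Lemma~\ref{lem1}), the identification $\bW(\bG,\bT_i)^F\leftrightarrow C_{\bW,\sigma}(w_i'd)$ sending $y=n\bT_i$ to the image $c\in\bW$ of $g_i^{-1}ng_i\in N_\bG(\bT_0)$.

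Next I would pin down a compatible choice of $\tilde{g}_i$. Because $w=x_i^{-1}w_i'd\sigma(x_i)$ holds in $\bW$, the products $\dot{w}_i'\dot{d}F(\dot{x}_i)$ and $\dot{x}_i\dot{w}$ differ only by a factor from $\bT_0$, and Lang's theorem applied to $\bT_0$ with the twisted Frobenius $t\mapsto \dot{w}F(t)\dot{w}^{-1}$ produces $\bar{t}\in\bT_0$ such that
\[ \tilde{g}_i:=g_i\dot{x}_i\bar{t}\,g_1^{-1}\]
lies in $\bG^F$; one checks directly that $\tilde{g}_i\bT\tilde{g}_i^{-1}=\bT_i$. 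Since any other element of $\bG^F$ with the same conjugation property differs from this one by an element of $N_{\bG^F}(\bT)$, which merely reindexes the sum in Lemma~\ref{lem2}, I may assume this specific $\tilde{g}_i$.

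With these choices in place the main identity reduces to a direct calculation. Writing $g_i^{-1}\dot{y}g_i=\dot{c}\tau$ with $\tau\in\bT_0$ and using $g_i^{-1}sg_i=g^{-1}(h_i^{-1}sh_i)g=g^{-1}sg=t$ (because $h_i\in\bH_s\subseteq C_\bG(s)$ centralises $s$), absorbing every $\bT_0$-valued factor via the abelianness of $\bT_0$ collapses the element $g_1^{-1}\tilde{g}_i^{-1}\dot{y}^{-1}s\dot{y}\tilde{g}_ig_1$ to $\dot{x}_i^{-1}\dot{c}^{-1}t\dot{c}\dot{x}_i$. A short verification (using $c\in C_{\bW,\sigma}(w_i'd)$ and the fact that $w_i'\in\bW(\Phi')$ fixes $t\in Z(C_\bG^\circ(t))$) ensures that $\dot{c}^{-1}t\dot{c}\in\bT_0[w_i'd]$ and $\dot{x}_i^{-1}\dot{c}^{-1}t\dot{c}\dot{x}_i\in\bT_0[w]$; unpacking the definitions of $\theta'$ and $\theta_i'$ then gives $\theta(\tilde{g}_i^{-1}\dot{y}^{-1}s\dot{y}\tilde{g}_i)=\theta_i'(\dot{c}^{-1}t\dot{c})$. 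Summing over $y$, reindexed as $c$ ranging over $C_{\bW,\sigma}(w_i'd)$, yields the stated identity; the final sentence is immediate, since $\theta=1_\bT$ forces $\theta_i'\equiv 1$ and the sum reduces to $|C_{\bW,\sigma}(w_i'd)|$.

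The main obstacle is not any deep conceptual point but rather the compatible choice of $\tilde{g}_i$ just described, together with the disciplined bookkeeping of lifts in $N_\bG(\bT_0)$ that is required throughout: at each step one must track genuine group elements rather than their images in $\bW$, and then exploit that the various ``error'' elements land in the abelian group $\bT_0$ and therefore drop out of the final expression.
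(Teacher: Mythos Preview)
Your proposal is correct and follows essentially the same route as the paper's proof. The only cosmetic difference is where the Lang-theorem correction is inserted: the paper adjusts $h_i$ by an element $t_i'\in\bT'$ (citing \cite[Prop.~3.3.3]{Ca2}) so that $\tilde g_i:=h_ig\dot x_ig_1^{-1}$ already lies in $\bG^F$, whereas you keep $h_i$ fixed and insert a correcting $\bar t\in\bT_0$ into $\tilde g_i=g_i\dot x_i\bar t\,g_1^{-1}$; both lead to the same term-by-term computation $\theta(\tilde g_i^{-1}\dot y^{-1}s\dot y\tilde g_i)=\theta_i'(\dot c^{-1}t\dot c)$ via the identification $\bW(\bG,\bT_i)^F\cong C_{\bW,\sigma}(w_i'd)$ of \cite[Prop.~3.3.6]{Ca2}.
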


\begin{proof} Recall that $\bT=g_1\bT_0g_1^{-1}$, $\bT'=g\bT_0g^{-1}$ and
$\bT_i=h_i\bT'h_i^{-1}$ for all~$i$. Hence, setting $\tilde{g}_i:=h_i
g\dot{x}_ig_1^{-1}\in \bG$, we have $\tilde{g}_i\bT\tilde{g}_i^{-1}=\bT_i$
for all~$i$. Since $\bT$ and $\bT_i$ are $\bG^F$-conjugate, we can replace
$h_i$ by $h_it_i'$ for a suitable $t_i'\in \bT'$ such that $F(\tilde{g}_i)=
\tilde{g}_i$ (see the argument in the proof of \cite[Prop.~3.3.3]{Ca2}).
Thus, the elements $\tilde{g}_i$ are as required in Lemma~\ref{lem2}.
Next, by \cite[Prop.~3.3.6]{Ca2}, we have a group isomorphism
\[ C_{\bW,\sigma}(w_i'd)\stackrel{\sim}{\longrightarrow} \bW(\bG,\bT_i)^F,
\qquad c \mapsto h_ig\dot{c}g^{-1}h_i^{-1}.\]
(Recall that $h_ig\bT_0g^{-1}h_i^{-1}=\bT_i$ and $(h_ig)^{-1}F(h_ig)=
\dot{w}_i'\dot{d}$.) Hence, we have
\[\sum_{y\in \bW(\bG,\bT_i)^F} \theta(\tilde{g}_i^{-1}\dot{y}^{-1}s
\dot{y}\tilde{g}_i)=\sum_{c} \theta(\tilde{g}_i^{-1}h_ig\dot{c}^{-1}
g^{-1}h_i^{-1}sh_ig\dot{c}g^{-1}h_i^{-1}\tilde{g}_i)\]
where $c$ runs over all elements of $C_{\bW,\sigma}(w_i'd)$. Now
$g^{-1}h_i^{-1}sh_ig=t$; hence, the terms in the above sum on the right
hand side are given by
\[ \theta(\tilde{g}_i^{-1}h_ig\dot{c}^{-1}
t\dot{c}g^{-1}h_i^{-1}\tilde{g}_i)=\theta(g_1\dot{x}_i^{-1}\dot{c}^{-1}
t\dot{c}\dot{x}_ig_1^{-1})=\theta_i'(\dot{c}^{-1}t\dot{c})\]
for all $c\in C_{\bW,\sigma}(w_i'd)$, as required. Finally, consider
the assertion concerning $\bW(\bH_s,\bT_i)^F$. Let $\bW_s=N_{\bH_s}(\bT')
/\bT'$ and $\sigma_s\colon \bW_s\rightarrow \bW_s$ be induced by~$F$. Let
$\bH'= C_\bG^\circ(t)= g^{-1}\bH_sg$ and $F'\colon \bH'\rightarrow\bH'$ be 
as in Remark~\ref{rem1}; recall that $F'$ induces $\sigma'\in \mbox{Aut}
(\bW(\Phi'))$. As discussed in Remark~\ref{rem2}, conjugation by $g$ induces 
a bijection between the $\sigma'$-conjugacy classes in $\bW(\Phi')$
and the $\sigma_s$-conjugacy classes in~$\bW_s$. Thus, we have $\bW(\bH_s,
\bT_i)^F\cong \bW(\bH',\bT_i')^{F'}$ where $\bT_i'\subseteq \bH'$ is 
an $F'$-stable maximal torus of type $w_i'\in\bW(\Phi')$ (relative to~$F'$). 
Again by \cite[Prop.~3.3.6]{Ca2}, the group $\bW(\bH',\bT_i')^{F'}$ is
isomorphic to $C_{\bW(\Phi'),\sigma'}(w_i')$.
\end{proof}

The point about the above result is that the formula on the right hand
side of the identity can be explicitly and effectively computed, once a
character $\theta'\in\Irr(\bT_0[w])$ has been specified: all we need to
know is the action of $\bW$ on $\bT_0$, plus information concerning 
various $\sigma$-conjugacy classes in $\bW$.

\begin{exmp} \label{reguni} Assume that $\tilde{g}=su=us$ where 
$u\in\bH_s$ is regular unipotent. Then $Q_{\bT_i}^{\bH_s}(u)=1$ for 
$1\leq i \leq m$ (see \cite[Theorem~9.16]{DeLu}). Let $\theta=1_\bT$ be 
the trivial character of $\bT^F$. Then 
\[\RTG\bigl(1_\bT\bigr)(g)= |C_{\bW,\sigma}(w)|\sum_{1\leq i\leq m} 
|C_{\bW(\Phi'), \sigma'}(w_i')|^{-1}.\]
Indeed, this is now clear by Lemmas~\ref{lem2} and~\ref{lem3}. Note that, 
since the maximal tori $\bT$ and~$\bT_i$ are conjugate in $\bG^F$, we have 
$\bW(\bG,\bT)^F \cong \bW(\bG,\bT_i)^F$ and, hence,
$\bW(\bG,\bT_i)^F\cong C_{\bW,\sigma}(w)$ for $1\leq i\leq m$.
\end{exmp}

\begin{exmp} \label{greenp} Assume that $\bG$ is of split type; then
$\sigma=\mbox{id}_\bW$. Then we define
\[ R_\phi^\bG:=\frac{1}{|\bW|} \sum_{w\in \bW} \phi(w)R_{\bT_w}^\bG(1)
\qquad \mbox{for $\phi\in\Irr(\bW)$},\]
where $\bT_w\subseteq \bG$ is an $F$-stable maximal torus of 
type $w$ and $1$ stands for the trivial character of $\bT_w^F$. (This 
is a very special case of \cite[(3.7.1)]{LuB}.) We also have 
\[ R_{\bT_w}^\bG(1)=\sum_{\phi\in\Irr(\bW)} \phi(w)R_\phi^{\bG}
\qquad \mbox{for all $w\in\bW$};\]
so knowing the $R_{\bT_w}^\bG(1)$'s is equivalent to knowing the 
$R_\phi^\bG$'s. Also assume now that $\tilde{g}=su=us$ is such
that $s\in\bT_0^F$. Then one easily sees that
\[ R_\phi^\bG(\tilde{g})=\sum_{\psi\in\Irr(\bW_s)}m(\psi,\phi) 
R_{\psi}^{\bH_s}(u) \qquad\mbox{for any $\phi\in \Irr(\bW)$},\]
where $\bW_s=N_{\bH_s}(\bT_0)/\bT_0\subseteq \bW$ is the Weyl group
of $\bH_s$ and $m(\psi,\phi)$ denotes the multiplicity of $\psi
\in\Irr(\bW_s)$ in the restriction of~$\phi$. Thus, here the question 
of finding the fusion of $F$-stable maximal tori from $\bH_s$ to $\bG$ 
has been absorbed into the question of decomposing the restriction of 
any $\phi\in\Irr(\bW)$ to~$\bW_s$. 
\end{exmp}

\begin{exmp} \label{secf4g} Let $\bG$ be simple of type $F_4$, where 
$p\neq 2$. There exists an involution $s\in\bT_0^F$ such that $\bH':=
C_\bG(s)$ has a root system of type $B_4$ (see \S \ref{b4} below 
for further details). Furthermore, there is a unipotent element
$u\in \bH^F$ such that, if we let $\Sigma$ be the $\bG^F$-conjugacy 
class of $su$, then $\Sigma^F$ splits into five conjugacy classes 
in $\bG^F$, with centraliser orders $8q^8,8q^8,4q^4,4q^4,4q^4$ (see 
\S \ref{csha7} for more details). The condition on the centraliser 
orders uniquely determines the conjugacy class of~$u$ in~$\bH'$. Now 
let $u'\in \bH'^F$ be one of the unipotent elements such that $su'\in 
\Sigma$ and $|C_{\bG}(su')^F|=8q^8$. 

Let $\bW'\subseteq \bW$ be the Weyl group of $\bH'$. Then $\Irr(\bW')$
is parametrised by the bi-partitions of~$4$. By the output of the 
function {\tt ICCTable} in Michel's version of {\sf CHEVIE} \cite{jmich}, 
the only $\psi\in \Irr(\bW')$ such that $R_{\psi}^{\bH'}(u')\neq 0$ are 
$\psi_{(4,-)}$, $\psi_{(3,1)}$, $\psi_{(-,4)}$, $\psi_{(22,-)}$, 
$\psi_{(2,2)}$. Furthermore, we have 
\[ R_{\psi_{(4,-)}}^{\bH'}(u')=1, \; R_{\psi_{(3,1)}}^{\bH'}(u')=q,\; 
R_{\psi_{(-,4)}}^{\bH'}(u')=R_{\psi_{(22,-)}}^{\bH'}(u')=
R_{\psi_{(2,2)}}^{\bH'}(u')=q^2,\]
In order to evaluate the formula for $R_\phi^\bG$ in Example~\ref{greenp}, 
we need to know the multiplicities $m(\psi,\phi)$ for $\psi \in
\Irr(\bW')$ and $\phi\in \Irr(\bW)$; these are readily available via 
the function {\tt InductionTable} of {\sf CHEVIE} \cite{chevie}.
This yields the following values:
\begin{gather*}
R_{\phi_{9,6}''}^\bG(su')=R_{\phi_{4,8}}^\bG(su')=
R_{\phi_{1,12}''}^\bG(su')=q^2,\\ R_{\phi_{12,4}}^\bG(su')= 
R_{\phi_{9,6}'}^\bG(su')= R_{\phi_{1,12}'}^\bG(su')=0,
\end{gather*}
where we use the notation of Carter \cite[p.~413]{Ca2} for the irreducible 
characters of~$\bW$. ({\sf CHEVIE} uses the same notation; the conversion 
to the notation defined and used by Lusztig \cite{LuB} is displayed in 
Table~\ref{labelf4}.) The knowledge of the above values will turn out to 
be useful in the further discussion in \S \ref{csha7}.
\end{exmp}

\begin{table}[htbp] \caption{Conventions for the labelling of $\Irr(\bW)$
for type $F_4$} \label{labelf4}
\begin{center}
$\renewcommand{\arraystretch}{1.0} \renewcommand{\arraycolsep}{4pt} 
\begin{array}{l}
\begin{array}{|ccccccccccccc|}  \hline
\phi_{1,0}& \phi_{1,12}''& \phi_{1,12}'& \phi_{1,24}& 
\phi_{2,4}''& \phi_{2,16}'& \phi_{2,4}'& \phi_{2,16}''& \phi_{4,8}& 
\phi_{9,2}& \phi_{9,6}''& \phi_{9,6}' & \phi_{9,10}\\
1_1& 1_3& 1_2& 1_4& 2_3& 2_4& 2_1& 2_2& 4_1& 9_1& 9_3& 9_2 & 9_4\\ 
\hline\end{array}
\\[10pt] \begin{array}{|ccccccccccccc|}  \hline
\phi_{6,6}'& \phi_{6,6}''& \phi_{12,4}& \phi_{4,1}& \phi_{4,7}''& 
\phi_{4,7}'& \phi_{4,13}& \phi_{8,3}''& \phi_{8,9}'& \phi_{8,3}'& 
\phi_{8,9}''& \phi_{16,5}& \\ 6_1& 6_2& 12_1& 4_2& 4_4& 4_3& 4_5& 
8_3& 8_4& 8_1& 8_2& 16_1& \\ \hline \end{array}\\
\text{\footnotesize The labels $\phi_{1,0}$ etc.\ are those in 
\cite[p.~413]{Ca2}; the labels $1_1$ etc.\ those of Lusztig 
\cite[4.10]{LuB}.}
\end{array}$
\end{center}
\end{table}

\section{Characteristic functions and conjugacy classes} \label{sec3}

Let $\hat{\bG}$ denote the set of character sheaves on $\bG$ (up to
isomorphism), as defined by Lusztig \cite{L2a}. If $A\in \hat{\bG}$ is
$F$-invariant, that is, we have $F^*A\cong A$, then the choice of an 
isomorphism $\phi\colon F^*A\stackrel{\sim}{\rightarrow} A$ gives rise 
to a characteristic function $\chi_{A}\colon \bG^F\rightarrow 
\overline{\Q}_l$ (where $l\neq p$ is a prime); see \cite[\S 5]{Lintr}. The 
isomorphism $\phi$ can be chosen such that the values of $\chi_{A}$ 
are cyclotomic integers and the standard inner product of $\chi_{A}$
with itself is~$1$. Hence, we may assume that $\chi_{A}$ is a 
function with values in~$\K$. The various functions arising in this
way form an orthonormal basis of the space of class functions on~$\bG^F$. 
(See \cite[\S 25]{L2e}; these results hold unconditionally because of
the ``cleanness'' established in \cite{L10}.) Similarly to the
situation for $\Irr(\bG^F)$, we have a partition $\hat{\bG}=\coprod_s 
\hat{\bG}_s$ where $s$ runs over the semisimple elements (up to
conjugation) in a group $\bG^*$ dual to $\bG$ (see \cite[1.2]{L7}.) 
The character sheaves in~$\hat{\bG}_s$, where $s=1$, are called
unipotent character sheaves.

In the case where $A$ is a cuspidal character sheaf (and $\bG$ is
simple), the characteristic functions $\chi_{A}$ can be evaluated 
in a simple way. We begin with some general remarks concerning conjugacy 
classes.

\begin{abs} {\bf Parametrisation of $\bG^F$-conjugacy classes.} 
\label{paracl} Let $\Sigma$ be an $F$-stable conjugacy class of $\bG$. 
Let us fix a representative $g_1\in\Sigma^F$ and set $A_\bG(g_1):=
C_\bG(g_1)/C_\bG^\circ(g_1)$. Then $F$ induces an automorphism of 
$A_\bG(g_1)$ that we denote by the same symbol. Given $a\in A_\bG(g_1)$,
let $\dot{a}\in C_\bG(g_1)$ be a representative of~$a$ and write
$\dot{a}=x^{-1}F(x)$ for some $x\in \bG$. Then $g_a:=xg_1x^{-1}\in\bG^F$; 
let $C_a$ be the $\bG^F$-conjugacy class of~$g_a$. A standard argument 
(using Lang's Theorem, see \cite[I, 2.7]{SS}) shows that $C_a$ only 
depends on~$a$; furthermore $\Sigma^F=\bigcup_{a\in A_\bG(g_1)} C_a$, 
where $C_a=C_{a'}$ if and only if $a,a'$ are $F$-conjugate in $A_\bG(g_1)$.
Now there are two natural operations on the $\bG^F$-conjugacy
classes contained in $\Sigma^F$.

(a) The first one is denoted by $\mbox{Sh}_\bG$ and called the Shintani
map. Let $C$ be a $\bG^F$-conjugacy class contained in $\Sigma^F$; 
thus, $C=C_a$ for some $a\in A_\bG(g_1)$. Let $g\in C$ and write 
$g=x^{-1}F(x)$ for some $x\in \bG$. Then $g':=xgx^{-1}\in \bG^F$ and 
the $\bG^F$-conjugacy class of $g'$ does not depend on the choice 
of~$g$ or~$x$; we denote that class by $\mbox{Sh}_\bG(C)$. By 
Digne--Michel \cite[Chap.~IV, Prop.~1.1]{DiMi0}, we have 
\[ \mbox{Sh}_\bG(C_a)=C_{\bar{g}_1a} \qquad (a\in A_\bG(g_1))\]
where $\bar{g}_1$ denotes the image of $g_1\in C_\bG(g_1)$ in
$A_\bG(g_1)$. 


(b) The second one, defined in \cite[3.1]{L7}, only plays a role when 
$Z(\bG)\neq \{1\}$. Let $z\in Z(\bG)$ and write $z=t^{-1}F(t)$ for 
some $t\in\bG$. (We could even take $t\in \bT_0$.) As above, let $C=C_a$ 
be a $\bG^F$-conjugacy class contained in $\Sigma^F$. One easily sees
that $\gamma_z(C):=tCt^{-1}$ is a conjugacy class in $\bG^F$ and does 
not depend on the choice of~$t$; furthermore, we have
\[ \gamma_z(C_a)=C_{\bar{z}a}  \qquad (a\in A_\bG(g_1))\]
where $\bar{z}\in A_\bG(g_1)$ denotes the image of $z$ under the
natural map $Z(\bG)\rightarrow A_\bG(g_1)$.
\end{abs}

\begin{abs} {\bf Characteristic functions of cuspidal character sheaves.} 
\label{charf} Assume that $\bG$ is a simple algebraic group. Let $A$ 
be a cuspidal character sheaf on $\bG$ such that $F^*A\cong A$. (See 
\cite[Def.~3.10]{L2a}; such an $A$ may be unipotent or not.) Then there
exists an $F$-stable conjugacy class $\Sigma$ of $\bG$ and an irreducible, 
$\bG$-equivariant $\overline{\Q}_l$-local system $\cE$ on $\Sigma$ such 
that $F^*\cE\cong \cE$ and $A=\mbox{IC}(\overline{\Sigma},\cE)[\dim 
\Sigma]$; see \cite[3.12]{L2a}. Let us fix $g_1\in \Sigma^F$ and set 
$A_\bG(g_1):=C_\bG(g_1)/C_\bG^\circ(g_1)$, as above. We further assume that:
\begin{itemize}
\item[($*$)] the local system $\cE$ is one-dimensional and, 
hence, corresponds to an $F$-invariant linear character $\psi\colon 
A_\bG(g_1) \rightarrow \K^\times$ (via \cite[19.7]{Ldisc4}).
\end{itemize}
(This assumption will be satisfied in all examples that we consider.)
Now ($*$) implies that the function $\psi\colon A_\bG(g_1) \rightarrow 
\K^\times$ is constant on the $F$-conjugacy classes of $A_\bG(g_1)$. 
Hence, we obtain a class function $\chi_{g_1,\psi}\colon \bG^F\rightarrow
\K$ by setting
\[\chi_{g_1,\psi}(g):=\left\{\begin{array}{cl} q^{(\dim \bG-\dim\Sigma)/2}
\psi(a) & \quad\mbox{if $g\in C_a$ for some $a\in A_\bG(g_1)$},\\ 0 & \quad
\mbox{if $g\not\in \Sigma^F$}.\end{array}\right.\]
(Note that there are cases where $\dim \bG-\dim\Sigma$ is not even; in
such a case, we also need to fix a square root of $q$ in $\K$.) 
Since $\cE$ is one-dimensional, we can choose an isomorphism 
$F^*\cE \stackrel{\sim}{\rightarrow}\cE$ such that the induced map on the 
stalk $\cE_{g_1}$ is given by scalar multiplication by $q^{(\dim \bG-
\dim \Sigma)/2}$. Then this isomorphism canonically induces an isomorphism
$\phi\colon F^*A \stackrel{\sim}{\rightarrow} A$ and $\chi_{g_1,\psi}$ is 
the corresponding characteristic function $\chi_A$, of norm~$1$ with 
respect to the standard inner product. (This follows from the fact that
$A$ is ``clean'' \cite{L10}, using the construction in 
\cite[19.7]{Ldisc4}.) We shall also set
\[ \lambda_A:=\psi(\bar{g}_1)\qquad\mbox{where $\bar{g}_1$ denotes the 
image of $g_1$ in $A_\bG(g_1)$}.\]
Then $\lambda_A$ is a root of unity that only depends on $A$ (see 
Shoji \cite[Theorem~3.3]{S2}, \cite[Prop.~3.8]{S2}); it is a useful 
invariant of $A$. In this context, we have the following basic problem,
formulated by Lusztig \cite[0.4(a)]{L7}:
\begin{itemize}
\item[($\clubsuit$)] Express the functions $\chi_{g_1,\psi}$ as explicit
linear combinations of $\Irr(\bG^F)$.
\end{itemize}
This problem is solved in many cases, but not in complete generality.
Some examples in small rank cases (types $A_1$, $C_2$, ${^3\!D}_4$, 
$\ldots$) are mentioned in \cite[Example~7.8]{mylaus}. We will produce 
further examples below.
\end{abs}

For $\bG$ simple of exceptional type, many cuspidal character sheaves turn 
out to be unipotent. (Exceptions only occur in types $E_6$ and $E_7$.)
So it is of particular importance to address theses cases.

\begin{abs} {\bf Cuspidal unipotent character sheaves.}  \label{ualm} 
Assume that $\bG$ is simple, of split type (so $\sigma=\mbox{id}_\bW$). 
Let $\Unip(\bG^F)$ denote the set of unipotent characters of $\bG^F$.
By \cite[Main Theorem~4.23]{LuB}, $\Unip(\bG^F)$ is parametrised
by a certain set $X(\bW)$ which only depends on $\bW$. For each $x\in
X(\bW)$, we have a corresponding almost character $R_x$, defined as an 
explicit linear combination of $\Unip(\bG^F)$; see \cite[4.24.1]{LuB}. 
There is an embedding $\Irr(\bW)\hookrightarrow X(\bW)$, $\phi\mapsto 
x_\phi$, such that
\[ R_{x_\phi}=R_\phi^\bG=\frac{1}{|\bW|} \sum_{w\in \bW} \phi(w)
R_{\bT_w}^\bG(1) \qquad \mbox{for $\phi\in\Irr(\bW)$}.\]
Thus, the values of $R_{x_\phi}$ can be computed using the 
character table of $\bW$ and the results discussed in 
Section~\ref{sec2}. Finally, the unipotent character sheaves on
$\bG$ are also parametrised by $X(\bW)$; see \cite[Theorem~23.1]{L2e}
(plus the ``cleanness'' in~\cite{L10}). If $x\in X(\bW)$ is such that 
$A_x$ is cuspidal and $F$-invariant, then we have a corresponding 
characteristic function $\chi_{g_1,\psi}$ as in \S \ref{charf}. 
In this situation, the solution of ($\clubsuit$) in \S \ref{charf}
is known, that is, for all $x\in X(\bW)$ such that $A_x$ is cuspidal, 
we have 
\begin{itemize}
\item[($\clubsuit^\prime$)] $R_x=\zeta\chi_{g_1,\psi}$ for 
some scalar $\zeta\in\K$ of absolute value $1$.
\end{itemize}
If $p$ is sufficiently large, then this is part of Lusztig 
\cite[Theorem~0.8]{L7}. For arbitrary $p$, this is part of the main 
results of Shoji \cite{S2}, \cite{S3}, (The latter results hold 
without condition on~$p$, thanks to the ``cleanness'' in \cite{L10}.) 
The scalars $\zeta$ are determined by Shoji \cite{Sclass}, \cite{S7} for 
$\bG$ of classical type. For exceptional types, there are a number of 
cases where the scalars $\zeta$ remain to be determined, and it is one 
purpose of this paper to reduce the number of open cases.

The following technical result will be needed in Section~\ref{sece7}.
\end{abs}

\begin{lem} \label{samev} In the setting of \S \ref{paracl},
let $a\in A_\bG(g_1)$ and $z\in Z(\bG)$. Then every $\rho\in\Unip(\bG^F)$ 
takes the same value on $C_a$ and on $C_{\bar{z}a}$.
\end{lem}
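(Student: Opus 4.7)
The plan is to transfer the question to a regular embedding of $\bG$. Fix a regular embedding $\iota\colon\bG\hookrightarrow\tilde\bG$, that is, $\bG$ is a closed normal subgroup of $\tilde\bG$, the quotient $\tilde\bG/\bG$ is a torus, and $Z(\tilde\bG)$ is connected; extend $F$ compatibly so that $\bG^F\subseteq\tilde\bG^F$. Since $Z(\bG)\subseteq Z(\tilde\bG)$, the given $z$ lies in the connected torus $Z(\tilde\bG)$.

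The argument would rest on two ingredients. The first is the well-known fact that every unipotent character $\rho\in\Unip(\bG^F)$ is invariant under the conjugation action of $\tilde\bG^F$ on the class functions of $\bG^F$. Via Lusztig's Jordan decomposition, this reduces to the observation that the action of the kernel of $\tilde\bG^{*F}\to\bG^{*F}$ by translation on the semisimple geometric conjugacy classes of $\bG^*$ fixes the identity class, and the latter is exactly the class parametrising $\Unip(\bG^F)$ (see, for example, \cite[\S 2.3.26]{gema}).

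The second ingredient realises the map $\gamma_z$ of \S \ref{paracl}(b) as conjugation by an element of $\tilde\bG^F$. Apply Lang's Theorem in the connected group $Z(\tilde\bG)$ to find $\tilde t\in Z(\tilde\bG)$ with $\tilde t^{-1}F(\tilde t)=z$, and as in \S \ref{paracl}(b) fix $t\in\bG$ with $t^{-1}F(t)=z$. Set $\tilde x:=t\tilde t^{-1}\in\tilde\bG$. Using the centrality of $\tilde t$, one checks
\[\tilde x^{-1}F(\tilde x)=\tilde t\,t^{-1}F(t)\,F(\tilde t)^{-1}=\bigl(t^{-1}F(t)\bigr)\bigl(\tilde t\,F(\tilde t)^{-1}\bigr)=z\cdot z^{-1}=1,\]
so $\tilde x\in\tilde\bG^F$; moreover $\tilde x\,g\,\tilde x^{-1}=tgt^{-1}$ for every $g\in\bG^F$, again by centrality of $\tilde t$.

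Combining the two ingredients: for any $g\in C_a$ the element $\tilde x\,g\,\tilde x^{-1}$ lies in $\gamma_z(C_a)=C_{\bar z a}$, while the $\tilde\bG^F$-invariance of $\rho$ gives $\rho(g)=\rho(\tilde x\,g\,\tilde x^{-1})$. Hence $\rho$ takes the same value on $C_a$ and on $C_{\bar z a}$, as claimed. The main (in fact, only non-trivial) obstacle is justifying the $\tilde\bG^F$-invariance of unipotent characters; once this is invoked, the remainder is a direct Lang's Theorem manipulation.
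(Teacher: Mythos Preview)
Your proof is correct and follows essentially the same route as the paper's own proof: both use a regular embedding $\bG\hookrightarrow\tilde\bG$, both construct the element $t\tilde t^{-1}\in\tilde\bG^F$ by applying Lang's Theorem in $Z(\tilde\bG)$, and both conclude via the $\tilde\bG^F$-invariance of unipotent characters of $\bG^F$. The only cosmetic difference is that the paper justifies this invariance more directly, by observing that every $\rho\in\Unip(\bG^F)$ is literally the restriction of some $\tilde\rho\in\Unip(\tilde\bG^F)$ (since $R_\bT^\bG(1)$ is the restriction of $R_{\tilde\bT}^{\tilde\bG}(1)$ and each unipotent $\tilde\rho$ restricts irreducibly), whereas you invoke the Jordan decomposition; the paper's argument is slightly more self-contained but the content is the same.
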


\begin{proof} Let $g\in C_a$. By \S \ref{paracl}(b) we have 
$C_{\bar{z}a}=\gamma_z(C_a)$. Hence, writing $z=t^{-1}F(t)$ for 
some $t\in \bG$, we have $g':=tgt^{-1}\in C_{\bar{z}a}$. Let 
$\rho\in\Unip(\bG^F)$. In order to show that $\rho(g)=\rho(g')$,
we use a regular embedding $\bG\subseteq \tilde{\bG}$ (see, e.g., 
\cite[\S 1.7]{gema}). Thus, $\tilde{\bG}$ is  a connected reductive
group with a connected center and $\bG$, $\tilde{\bG}$ have the same
derived subgroup; furthermore, $\tilde{\bG}$ is also defined over
$\F_q$ and we denote the corresponding Frobenius map again by~$F$.
Now $Z(\bG)\subseteq Z(\tilde{\bG})$ and so, since $Z(\tilde{\bG})$
is connected, we can write $z=\tilde{t}^{-1}F(\tilde{t})$ where
$\tilde{t}\in Z(\tilde{\bG})$. Then $h:=t\tilde{t}^{-1}\in 
\tilde{\bG}^F$ and so $hgh^{-1}=t\tilde{t}^{-1}g\tilde{t}t^{-1}=
tgt^{-1}=g'$, that is, $g$ and $g'$ are conjugate in $\tilde{\bG}^F$.
Since $\rho$ is unipotent, there exists some $F$-stable maximal
torus $\bT\subseteq \bG$ such that $\rho$ occurs in $R_{\bT}^{\bG}(1)$
(where $1$ stands for the trivial character of~$\bT^F$). There is
an $F$-stable maximal torus $\tilde{\bT}\subseteq \tilde{\bG}$
such that $\bT\subseteq \tilde{\bT}$. Since $R_\bT^\bG(1)$ is
the restriction of $R_{\tilde{\bT}}^{\tilde{\bG}}(1)$ to $\bG^F$
(see \cite[Remark~2.3.16]{gema}), there exists some $\tilde{\rho}
\in \Unip(\tilde{\bG})$ such that $\rho$ occurs in the restriction of 
$\tilde{\rho}$ to~$\bG^F$. But it is known that $\tilde{\rho}|_{\bG^F}$
is irreducible (see \cite[Lemma~2.3.14]{gema}) and so $\rho$ is equal 
to the restriction of~$\tilde{\rho}$. Thus, we certainly have
$\rho(g)=\tilde{\rho}(g)=\tilde{\rho}(g')=\rho(g')$.
\end{proof}

\begin{exmp} \label{g2csh} Let $\bG$ be simple of type $G_2$. In this
case, the complete character table of $\bG^F$ is known; see
Chang--Ree \cite{chre} ($p\neq 2,3$), Enomoto \cite{En1} ($p=3$)
and Enomoto--Yamada \cite{En2} ($p=2$). Now, there are four cuspidal
character sheaves, and they are all unipotent; see \cite[\S 20]{L2d}, 
\cite[\S 6, \S 7]{S2}. From the known character tables, 
the above identities ($\clubsuit^\prime$) and the required scalars 
$\zeta$ can be easily extracted. For example, if $p\neq 2,3$, then the 
four functions $Y_1,Y_2,Y_3,Y_4$ printed on \cite[p.~411]{chre} are 
characteristic functions of the four cuspidal character sheaves on $\bG$.
\end{exmp}

Let us go back to the general case. Implicit in ($\clubsuit$) and
($\clubsuit^\prime$) is the problem of choosing a convenient
representative $g_1\in\Sigma^F$. In a number of cases, $\Sigma$ 
consists of regular elements in $\bG$. In such a case, there are
additional techniques to single out a canonical choice for $g_1 \in 
\Sigma^F$; see Corollary~\ref{correg1} below. 

\begin{abs} {\bf Regular elements.} \label{remreg}  An element $g\in \bG$ 
is called regular, if $\dim C_\bG(g)$ is as small as possible; it is known 
that this is equivalent to the condition that $\dim C_\bG(g)=\dim \bT_0$. 
Furthermore, let $g=su=us$ be the Jordan decomposition of $g$ (where $s$ 
is semisimple and $u$ is unipotent). Then $g$ is regular if and only if
$u$ is regular in $C_{\bG}^\circ(s)$. By Steinberg \cite[Theorem~1.2]{St},
every semisimple element of $\bG$ is the semisimple part of some 
regular element; finally, two regular elements of $\bG$ are conjugate if 
and only if their semisimple parts are conjugate. In particular, all 
regular unipotent elements are conjugate. 

Assume now that $\bG$ is simple and simply connected. Then a cross-section
for the conjugacy classes of regular elements has been found by Steinberg 
\cite{St}. Let us write $\bB=\bU\bT_0$ where $\bU$ is the unipotent 
radical of $\bB$. Let $\bB^-\subseteq \bG$ be the opposite Borel 
subgroup; then $\bB^-=\bU^-\bT_0$, where $\bU^-$ is the unipotent 
radical of $\bB^-$, and we have $\bU\cap \bU^-=\{1\}$. Let $w_c:=
w_{\alpha_1}\cdots w_{\alpha_r} \in\bW$ be a Coxeter element, where 
$r=\dim \bT_0$ and $\alpha_1, \ldots,\alpha_r$ is a fixed enumeration of 
the simple roots in~$\Phi^+$. Then the required cross-section is given by 
\[ \cN_{\dot{w}_c}:=\bU\dot{w}_c\cap \dot{w}_c\bU^-\;\subseteq \;
\bU \dot{w}_c \bU\;\subseteq \;\bB \dot{w}_c \bB.\]
Indeed, by \cite[Theorem~1.4, Lemma~7.3]{St}, all elements of 
$\cN_{\dot{w}_c}$ are regular and every regular element of $\bG$ is 
conjugate to exactly one element in $\cN_{\dot{w}_c}$. And everything
takes place inside the single double coset $\bU \dot{w}_c \bU$; note 
that this depends on the choice of the representative
$\dot{w}_c$ of~$w_c\in\bW$. The following result is a very special case
of the results on ``$C$-small'' classes in \cite[\S 5]{Lfrom}, so 
we include the proof here. (It is already mentioned in the proof of
\cite[Lemma~8.10]{L3}.) 
\end{abs}

\begin{prop}[Steinberg, He--Lusztig] \label{lemreg} Assume that $\bG$ is 
simple and simply connected. Let $\Sigma$ be a $\bG$-conjugacy class of 
regular elements.
\begin{itemize}
\item[(a)] The set $\Sigma \cap \bU\dot{w}_c\bU\neq \varnothing$ 
is a single $\bU$-orbit (under conjugation).
\item[(b)] The set $\Sigma \cap \bB\dot{w}_c\bB\neq \varnothing$ 
is a single $\bB$-orbit (under conjugation).
\item[(c)] In {\rm (a)}, the stabilisers  are trivial;
in {\rm (b)} they are equal to $Z(\bG)$.
\end{itemize}
\end{prop}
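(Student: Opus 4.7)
The plan is to reduce both parts to Steinberg's theorem that $\cN_{\dot{w}_c}$ is a cross-section for the regular $\bG$-conjugacy classes. In particular, there is a unique $n_0\in\Sigma\cap\cN_{\dot{w}_c}$, and since $\cN_{\dot{w}_c}\subseteq\bU\dot{w}_c\bU\subseteq\bB\dot{w}_c\bB$, the nonemptiness assertions in (a) and (b) are automatic.

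The core technical step, whose full force is classical work of Steinberg \cite{St} (subsumed in the ``$C$-small'' framework of He--Lusztig \cite[\S 5]{Lfrom}), is to establish that the conjugation morphism
\[
\mu\colon \bU\times\cN_{\dot{w}_c}\longrightarrow \bU\dot{w}_c\bU,\qquad (v,n)\mapsto vnv^{-1},
\]
is a bijection. Both source and target are smooth and irreducible of dimension $\dim\bU+\ell(w_c)$. Injectivity follows from the uniqueness of the Bruhat decomposition together with $\bU\cap\bU^-=\{1\}$. For surjectivity, I would write any $g\in\bU\dot{w}_c\bU$ as $g=u_1\dot{w}_c u_2$, conjugate by $u_1^{-1}$ to reduce $g$ to the form $\dot{w}_c u$ with $u\in\bU$, then apply the variety factorisation $\bU=(\bU\cap\dot{w}_c\bU\dot{w}_c^{-1})\cdot(\bU\cap\dot{w}_c^{-1}\bU^-\dot{w}_c)$ to decompose $u$ and produce a further $\bU$-conjugate lying in $(\bU\cap\dot{w}_c\bU^-\dot{w}_c^{-1})\cdot\dot{w}_c=\cN_{\dot{w}_c}$. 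Once $\mu$ is known to be bijective, (a) and the corresponding part of (c) follow at once: the fibre $\mu^{-1}(\Sigma\cap\bU\dot{w}_c\bU)$ equals $\bU\times\{n_0\}$, so $\Sigma\cap\bU\dot{w}_c\bU$ is the single $\bU$-orbit $\{vn_0v^{-1}\mid v\in\bU\}$, and injectivity of $\mu$ forces the stabiliser of $n_0$ in $\bU$ to be trivial.

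For (b) the plan is to reduce to (a) via an additional $\bT_0$-conjugation. Any element of $\bB\dot{w}_c\bB=\bU\bT_0\dot{w}_c\bU$ is, after an initial $\bU$-conjugation, of the form $t\dot{w}_c u$ with $t\in\bT_0$ and $u\in\bU$. Conjugation by $t_0\in\bT_0$ then replaces $t$ by $t\cdot L_{w_c}(t_0)$, where $L_{w_c}\colon\bT_0\to\bT_0$ is defined by $t_0\mapsto t_0\cdot\dot{w}_c t_0^{-1}\dot{w}_c^{-1}$. Since $\bG$ is simple and simply connected and $w_c$ is a Coxeter element, $w_c$ acts on the character lattice of $\bT_0$ without nonzero fixed vector, so $\ker L_{w_c}=\bT_0^{w_c}$ is finite and equals $Z(\bG)$; hence $L_{w_c}$ is surjective by dimension. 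Choosing $t_0$ to absorb $t$ brings $g$ into $\dot{w}_c\bU\subseteq\bU\dot{w}_c\bU$, from which (a) finishes the job: $\Sigma\cap\bB\dot{w}_c\bB$ is the single $\bB$-orbit through~$n_0$.

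For the stabiliser in (b), $Z(\bG)\subseteq\mathrm{Stab}_\bB(n_0)$ is clear since $Z(\bG)$ acts trivially by conjugation. Conversely, $\mathrm{Stab}_\bB(n_0)\cap\bU=\{1\}$ by (a), so $\mathrm{Stab}_\bB(n_0)$ injects into $\bB/\bU\cong\bT_0$; writing a stabilising $b=tv$ with $t\in\bT_0$, $v\in\bU$, expanding $bn_0b^{-1}=n_0$, and matching the $\bU$-, $\bT_0$- and $\bU^-$-components of the resulting equation via the Bruhat factorisation forces $t\in\ker L_{w_c}=Z(\bG)$ and then $v=1$. The main obstacle throughout is the surjectivity of $\mu$: it rests on a careful bookkeeping of positive roots relative to $w_c$, which mixes positive and negative roots just enough for the factorisation to close up inside $\cN_{\dot{w}_c}$. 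A complete write-up may either reproduce Steinberg's argument or invoke \cite[Thm.~1.4, Lemma~7.3]{St} directly.
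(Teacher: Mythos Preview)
Your proposal is correct and follows essentially the same route as the paper: both arguments hinge on the bijectivity of the conjugation map $\mu\colon \bU\times\cN_{\dot w_c}\to\bU\dot w_c\bU$ (which the paper simply cites from He--Lusztig rather than sketching), then reduce (b) to (a) by a $\bT_0$-conjugation absorbing the torus factor (the paper invokes \cite[Lemma~7.6]{St} where you phrase it via surjectivity of $L_{w_c}$), and finally read off the stabiliser in (b) from the sharp Bruhat decomposition together with $\bT_0^{w_c}=Z(\bG)$. The only cosmetic differences are your choice of decomposition $b=tv$ versus the paper's $b=ut$, and that the identity $\bT_0^{w_c}=Z(\bG)$---which you assert---does not follow merely from $w_c$ having no nonzero fixed vector on the character lattice (that gives finiteness only); it is a separate fact that the paper draws from \cite[Remarks~7.7(b)]{St}.
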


\begin{proof} By He--Lusztig \cite[Theorem~3.6(ii)]{luhe}, the map
\[ \bU\times \cN_{\dot{w}_c} \rightarrow \bU\dot{w}_c\bU,\qquad
(u,z) \mapsto uzu^{-1},\]
is bijective. (A closely related result is stated in \cite[Prop.~8.9]{St},
but since the proof is omitted there, we cite \cite{luhe}; see also 
\cite[\S 10]{Bo2}.) Let us denote by $g$ the unique element in 
$\Sigma\cap\cN_{\dot{w}_c}$; in particular, $g\in\Sigma\cap
\bU\dot{w}_c\bU$. 

(a) Given any $g'\in\Sigma\cap \bU\dot{w}_c\bU$, 
we can write $g'=uzu^{-1}$ where $u\in\bU$ and $z\in \cN_{\dot{w}_c}$. 
Thus, the two elements $z$ and $g$ in $\cN_{\dot{w}_c}$ are conjugate 
in~$\bG$. But then we must have $z=g$ and so $g'$ is conjugate to 
$g$ under~$\bU$. 

(b) Take any $g'\in\Sigma\cap \bB\dot{w}_c\bB$. Since $\bB\dot{w}_c
\bB=\bU\bT_0\dot{w}_c\bU$, we can write $g'=u_1t\dot{w}_cu_2$ 
where $u_1,u_2\in\bU$ and $t\in\bT_0$. By \cite[Lemma~7.6]{St}, we can 
further write $t\dot{w}_c=\tilde{t}\dot{w}_c\tilde{t}^{-1}$ for some 
$\tilde{t}\in\bT_0$. Then 
\[\tilde{t}^{-1}g'\tilde{t}=\tilde{t}^{-1}u_1t\dot{w}_cu_2\tilde{t}=
(\tilde{t}^{-1}u_1\tilde{t})\dot{w}_c (\tilde{t}^{-1}u_2\tilde{t})\in 
\bU\dot{w}_c\bU.\]
So we have $\tilde{t}^{-1}g'\tilde{t}=uzu^{-1}$ where $u\in\bU$ 
and $z\in \cN_{\dot{w}_c}$. Thus, $z\in\cN_{\dot{w}_c}$ and $g\in 
\cN_{\dot{w}_c}$ are conjugate in~$\bG$ and so $z=g$. Hence, 
$g'$ is conjugate to $g$ under~$\bB$. 

(c) The bijectivity of the above map $\bU\times \cN_{\dot{w}_c} 
\rightarrow \bU\dot{w}_c \bU$ immediately implies that $C_{\bU}
(g)=\{1\}$; thus, the stabilisers are trivial in~(a). For (b), we 
must show that $\mbox{Stab}_{\bB}(g)=Z(\bG)$. So let $b\in 
\bB$ be such that $bgb^{-1}=g$. Writing $g=v\dot{w}_c$ (where $v\in
\bU$) and $b=ut$ (where $u\in\bU$ and $t\in\bT_0$), we obtain
\[ v\dot{w}_c=g=bgb^{-1}=utv\dot{w}_ct^{-1}u^{-1}=(utvt^{-1})\dot{w}_c
\bigl(\dot{w}_c^{-1}t\dot{w}_ct^{-1}\bigr) u^{-1}.\]
Setting $\tilde{t}:=\dot{w}_c^{-1}t\dot{w}_ct^{-1}\in\bT_0$, we see 
that the left hand side lies in the double coset $\bU\dot{w}_c\bU$, 
and the right hand side lies in the double coset $\bU\dot{w}_c
\tilde{t}\bU$. But then the sharp form of the Bruhat decomposition 
implies that $\tilde{t}=1$ and so $t=\dot{w}_c^{-1}t\dot{w}_c$.
By \cite[Remarks~7.7(b)]{St}, this forces $t\in Z(\bG)$. But then 
$u\in C_{\bU}(g)$ and so $u=1$. Hence, $\mbox{Stab}_{\bB}(g) 
\subseteq Z(\bG)$; the reverse inclusion is clear.
\end{proof}

\begin{cor} \label{correg1} In the setting of Proposition~\ref{lemreg}, 
assume that $\Sigma$ is $F$-stable and $F(\dot{w}_c)=\dot{w}_c$. 
Then there exists a unique $\bG^F$-conjugacy class $C\subseteq 
\Sigma^F$ such that $C\cap \bU^F\dot{w}_c\bU^F\neq \varnothing$.
Furthermore, we have $C\cap \cN_{\dot{w}_c}\neq \varnothing$.
\end{cor}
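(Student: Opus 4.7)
The plan is to derive the corollary directly from Proposition~\ref{lemreg}, after first checking that the cross-section $\cN_{\dot{w}_c}$ is $F$-stable. To that end, I would observe that $\bB^-$ is the unique Borel subgroup containing $\bT_0$ whose intersection with $\bB$ equals $\bT_0$; since $F(\bB)=\bB$ and $F(\bT_0)=\bT_0$, this characterisation forces $F(\bB^-)=\bB^-$ and hence $F(\bU^-)=\bU^-$. Combined with $F(\bU)=\bU$ and the hypothesis $F(\dot{w}_c)=\dot{w}_c$, this yields $F(\cN_{\dot{w}_c})=\cN_{\dot{w}_c}$.

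For existence I would then take $g$ to be the unique element of $\Sigma \cap \cN_{\dot{w}_c}$ (as identified in the proof of Proposition~\ref{lemreg}). Both $\Sigma$ and $\cN_{\dot{w}_c}$ are $F$-stable, so their intersection, being a single point, is automatically $F$-fixed; thus $g\in\bG^F$. Writing $g=u\dot{w}_c$ with $u\in\bU$, the equality $F(g)=g$ together with $F(\dot{w}_c)=\dot{w}_c$ forces $F(u)=u$, i.e., $u\in\bU^F$. Hence $g=u\dot{w}_c\cdot 1\in\bU^F\dot{w}_c\bU^F$, and the $\bG^F$-conjugacy class $C$ of $g$ meets both $\cN_{\dot{w}_c}$ and $\bU^F\dot{w}_c\bU^F$ as required.

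For uniqueness I would pick any $g'\in \Sigma^F\cap\bU^F\dot{w}_c\bU^F$. Since $g'\in \Sigma\cap\bU\dot{w}_c\bU$ and the latter is a single $\bU$-orbit by Proposition~\ref{lemreg}(a), I can write $g'=vgv^{-1}$ for a unique $v\in\bU$. Applying $F$ and using $F(g)=g=F(g')$ gives $F(v)gF(v)^{-1}=vgv^{-1}$, so $v^{-1}F(v)\in C_{\bU}(g)$; but Proposition~\ref{lemreg}(c) says this stabiliser is trivial, hence $F(v)=v$. Thus $g'$ and $g$ are $\bU^F$-conjugate, and in particular $\bG^F$-conjugate, so $g'\in C$.

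The argument is really a one-step Lang-type reduction: apart from the (mild) verification that $\bU^-$, and hence $\cN_{\dot{w}_c}$, is $F$-stable, no serious obstacle arises, since both the parametrisation by $\cN_{\dot{w}_c}$ and the triviality of the $\bU$-stabilisers are already packaged into Proposition~\ref{lemreg}.
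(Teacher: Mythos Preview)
Your proof is correct and follows essentially the same route as the paper's. The paper packages your uniqueness argument as a ``standard application of Lang's Theorem'' (transitive $\bU$-action with connected, in fact trivial, stabilisers on the $F$-stable set $\Sigma\cap\bU\dot{w}_c\bU$), and it handles the passage from $(\bU\dot{w}_c\bU)^F$ to $\bU^F\dot{w}_c\bU^F$ via the sharp Bruhat decomposition rather than by writing $g=u\dot{w}_c$ directly; but the underlying mechanism---transitivity plus trivial $\bU$-stabiliser from Proposition~\ref{lemreg}, together with $F$-stability of $\cN_{\dot{w}_c}$ forcing $F(g)=g$ for the unique $g\in\Sigma\cap\cN_{\dot{w}_c}$---is identical.
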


\begin{proof} By Proposition~\ref{lemreg}, the group $\bU$ acts 
transitively on $X:=\Sigma\cap \bU\dot{w}_c\bU$ by conjugation,
and we have $\mbox{Stab}_{\bU}(x)=\{1\}$ for all $x\in X$; in 
particular, $\mbox{Stab}_{\bU}(x)$ is connected. A standard
application of Lang's Theorem (see, e.g., \cite[Prop.~1.4.9]{gema}) 
shows that $X^F\neq \varnothing$ and that $X^F$ is a single 
$\bU^F$-orbit. Thus, there exists a unique $\bG^F$-conjugacy class 
$C\subseteq \Sigma^F$ such that $C\cap (\bU\dot{w}_c\bU)^F\neq
\varnothing$. Note that $(\bU \dot{w}_c\bU)^F=\bU^F\dot{w}_c
\bU^F$, by the sharp form of the Bruhat decomposition.

Finally note that, since $F(\dot{w}_c)=\dot{w}_c$, we have 
$F(\cN_{\dot{w}_c})=\cN_{\dot{w}_c}$. Let $g$ be the unique element 
in $\Sigma\cap\cN_{\dot{w}_c}$. Then we also have $F(g)\in\Sigma 
\cap \cN_{\dot{w}_c}$ and so $F(g)=g$. Hence, $g\in \Sigma^F$ and 
$g\in \bU^F\dot{w}_c\bU^F$. So $C$ must be the $\bG^F$-conjugacy 
class of~$g$.
\end{proof}

\begin{exmp} \label{correg2} Let $\bG$, $\Sigma$, $C$ be as
in Corollary~\ref{correg1}. Then, clearly, we also have $C\cap 
\bB^F\dot{w}_c\bB^F\neq\varnothing$. Since the stabilisers
for the action of $\bB$ on $\Sigma\cap \bB\dot{w}_c\bB$ are
equal to $Z(\bG)$, the set $(\Sigma\cap \bB\dot{w}_c\bB)^F$ will
split into finitely many $\bB^F$-orbits, indexed by the
$F$-conjugacy classes of $Z(\bG)$. More precisely, let $g$ be the 
unique element in $C\cap \cN_{\dot{w}_c}$. Let $z_1,\ldots,z_r\in 
Z(\bG)$ be representatives of the $F$-conjugacy classes of $Z(\bG)$, 
where $z_1=1$. For $1\leq i\leq r$, we set $g_i:=t_igt_i^{-1}$, where
$t_i\in\bT_0$ is such that $z_i=t_i^{-1}F(t_i)$; here, we also 
assume $t_1=1$. Then $g_1,\ldots,g_r$ are representatives of the 
$\bB^F$-orbits on $(\Sigma\cap \bB\dot{w}_c\bB)^F$ (see 
\cite[I, 2.7]{SS}). Hence,
\begin{equation*}
(\Sigma\cap \bB\dot{w}_c\bB)^F=\bigl(C_1\cap \bB^F\dot{w}_c\bB^F
\bigr) \cup \ldots \cup \bigl(C_r\cap \bB^F\dot{w}_c\bB^F\bigr),\tag{a}
\end{equation*}
where $C_i$ is the $\bG^F$-conjugacy class of $g_i$, for all~$i$. Since
$z_i=t_i^{-1}F(t_i)\in Z(\bG)$, the map $C_\bG(g)^F \rightarrow 
C_\bG(g_i)^F$, $x\mapsto t_ixt_i^{-1}$, is bijective. In particular, we 
have 
\begin{equation*}
|C|=|C_i| \qquad \mbox{for $1\leq i \leq r$}.\tag{b}
\end{equation*}
The union in (a) may not be disjoint; but $C_i=C_j$ can only happen if 
the images of $z_i$ and $z_j$ in $A_\bG(g)$ are $F$-conjugate. Now, it 
is known that $A_\bG(g)$ is abelian, see \cite[III, 1.16 and 1.17]{SS}; 
so, for example, if the natural map $Z(\bG)\rightarrow A_\bG(g)$ is 
injective and $F$ acts trivially on $A_\bG(g)$, then the $C_i$ are all 
distinct. (This will cover most examples that we consider.) Finally,
we note the implication:
\begin{equation*}
Z(\bG)^F=\{1\}\qquad\Rightarrow\qquad(\Sigma\cap \bB\dot{w}_c\bB)^F=
C\cap \bB^F\dot{w}_c\bB^F.\tag{c}
\end{equation*}
Indeed, if $Z(\bG)^F=\{1\}$, then all elements of $Z(\bG)$ are
$F$-conjugate and so $r=1$.
\end{exmp}

\begin{lem} \label{lemreg3} Let $\bG$, $\Sigma$, $C$ be as
in Corollary~\ref{correg1}. Assume, furthermore, that $\bG$ is of 
split type, that $\Sigma=\Sigma^{-1}$, and that the union in
Example~\ref{correg2}(a) is disjoint.
\begin{itemize}
\item[(a)] There is a permutation $i\mapsto i'$ (of order~$2$) of the
set $\{1,\ldots,r\}$ such that $C_i^{-1}=C_{i'}$ for $1\leq i\leq r$.
\item[(b)] If $r$ is odd (e.g., if $Z(\bG)^F=\{1\}$), then we have 
$C_i= C_i^{-1}$ for $1\leq i\leq r$. 
\end{itemize}
\end{lem}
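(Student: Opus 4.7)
The strategy is to first show that $\{C_1,\ldots,C_r\}$ exhausts all $\bG^F$-conjugacy classes contained in $\Sigma^F$, after which (a) is immediate and (b) reduces to a calculation in an abelian group. The crucial input is the classical fact---due to Steinberg; see \cite[III]{SS} and \cite{St}---that for a regular element $g$ in the simply connected group $\bG$ the natural homomorphism $Z(\bG)\to A_\bG(g)$ is surjective (in fact $C_\bG(g)=Z(\bG)\cdot C_\bG^\circ(g)$). This is the main obstacle; everything else is formal.

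\emph{Exhaustion and proof of~(a).} Steinberg's surjectivity descends to a surjection $Z(\bG)/(1-F)Z(\bG)\twoheadrightarrow A_\bG(g)/(1-F)A_\bG(g)$. By the criterion recalled at the end of Example~\ref{correg2} ($C_i=C_j$ iff $\bar z_i,\bar z_j$ are $F$-conjugate in $A_\bG(g)$), the disjointness hypothesis is precisely the statement that this map is also injective, hence a bijection. Both sides therefore have cardinality $r=|Z(\bG)^F|$, and by the parametrization in \S\ref{paracl} the right-hand group indexes the $\bG^F$-conjugacy classes in $\Sigma^F$. Hence $\Sigma^F=C_1\sqcup\cdots\sqcup C_r$. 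Since $\Sigma=\Sigma^{-1}$, the involution $g\mapsto g^{-1}$ preserves $\Sigma^F$ and carries $\bG^F$-classes to $\bG^F$-classes; it therefore permutes $\{C_1,\ldots,C_r\}$ with order dividing~$2$, yielding the required $i\mapsto i'$ with $C_i^{-1}=C_{i'}$.

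\emph{Proof of~(b).} I identify the permutation explicitly. Pick $h\in\bG$ with $hgh^{-1}=g^{-1}$; applying $F$ to this identity and using $g\in\bG^F$ shows $\dot b:=h^{-1}F(h)\in C_\bG(g)$, and we write $b\in A_\bG(g)$ for its image. Since $z_i$ is central,
\[
g_i^{-1}=t_ig^{-1}t_i^{-1}=(t_ih)\,g\,(t_ih)^{-1},\qquad (t_ih)^{-1}F(t_ih)=h^{-1}z_iF(h)=z_i\dot b.
\]
Thus by the parametrization of \S\ref{paracl} we have $g_i^{-1}\in C_{\bar z_ib}$; via the bijection of the exhaustion step, the permutation $i\mapsto i'$ corresponds to translation by $b$ in the abelian group $A_\bG(g)/(1-F)A_\bG(g)$ of order~$r$. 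Because this translation is an involution, $b^2=1$ in that group; if $r$ is odd, Lagrange's theorem forces $b=1$, and hence $C_i^{-1}=C_i$ for every $i$, proving~(b).
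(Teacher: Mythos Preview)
Your argument for (a) rests on the claim that, for a regular element $g$ in a simply connected group $\bG$, the natural map $Z(\bG)\to A_\bG(g)$ is surjective. This is not true in general. The result you are recalling from \cite{St} and \cite[III, 1.16--1.17]{SS} only gives that $A_\bG(g)$ is abelian; surjectivity holds for regular \emph{unipotent} elements (in good characteristic) but fails for arbitrary regular elements. A concrete counterexample is worked out in \S\ref{se6} of this very paper: for $\bG$ simply connected of type $E_6$ (with $p\neq 3$) and $g_1=s_1u_1\in\Sigma^F$ regular with $C_\bG(s_1)$ of type $A_2{\times}A_2{\times}A_2$, one has $|A_\bG(g_1)|=9$ while $|Z(\bG)|=3$. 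In that situation with $q\equiv 1\bmod 3$ the union in Example~\ref{correg2}(a) is disjoint, $r=3$, yet $\Sigma^F$ splits into $9$ classes; so $\{C_1,\ldots,C_r\}$ does not exhaust the classes in $\Sigma^F$, and there is no a~priori reason for inversion to preserve this subcollection.

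The paper circumvents this by never claiming exhaustion. Instead it shows directly that each $C_i^{-1}$ meets $\bB^F\dot{w}_c\bB^F$, using the equality $|C_i\cap\bB^F\dot{w}\bB^F|=|C_i\cap\bB^F\dot{w}'\bB^F|$ for $w,w'$ conjugate of minimal length in their class (applied to $w_c$ and $w_c^{-1}$; see \cite[0.2]{Lfrom}, \cite[Cor.~3.7(a)]{carp}). Since by Example~\ref{correg2}(a) the $C_j$ are exactly the classes in $\Sigma^F$ meeting $\bB^F\dot{w}_c\bB^F$, this forces $C_i^{-1}\in\{C_1,\ldots,C_r\}$, giving~(a). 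Your computation for~(b), identifying the permutation with translation by an element $b$ of order dividing~$2$, is correct and slightly slicker than the paper's argument once (a) is in hand; but it does need (a) as input, and your proof of (a) has the gap above.
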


\begin{proof} First note that $C_i^{-1}\subseteq \Sigma^{-1}=\Sigma$
for $1\leq i \leq r$. We claim that 
\[C_i^{-1}\cap \bB^F\dot{w}_c\bB^F \neq \varnothing \qquad\mbox{for 
all~$i$}.\]
To see this, it is enough to show that $C_i\cap \bB^F \dot{w}_c^{-1}
\bB^F \neq \varnothing$. Now note that $w_c^{-1}$ also is a Coxeter
element (of minimal length); it is well-known that $w_c$ and $w_c^{-1}$ 
are conjugate in $\bW$. By \cite[0.2]{Lfrom}, \cite[Cor.~3.7(a)]{carp}, 
we have
\[ |C_i\cap \bB^F\dot{w}\bB^F|=|C_i\cap \bB^F\dot{w}'\bB^F|\]
for any two elements $w,w'\in\bW$ that are conjugate in $\bW$ and of 
minimal length in their conjugacy class. In particular, we can 
conclude that 
\[|C_i^{-1}\cap \bB^F\dot{w}_c\bB^F|=|C_i\cap \bB^F\dot{w}_c^{-1} 
\bB^F| =|C_i\cap \bB^F\dot{w}_c \bB^F|\neq 0\]
for all~$i$, as required. Thus, the above claim is proved. 

(a) Let $i\in\{1,\ldots,r\}$. By the above claim, $C_i^{-1}$ is a 
$\bG^F$-conjugacy class that is contained in $(\Sigma\cap \bB\dot{w}_c
\bB)^F$. So, by Example~\ref{correg2}(a), we must have $C_i^{-1}=
C_{i'}$ for some $i'\in\{1,\ldots,r\}$. Since the $C_i$ are all distinct, 
$i'$ is uniquely determined by~$i$; furthermore, the map $i\mapsto i'$
is a permutation (of order~$2$) of the set $\{1,\ldots,r\}$. 

(b) Assume that $r$ is odd. Then there must be some $i_0\in\{1,\ldots,
r\}$ such that $i_0=i_0'$, that is, $C_{i_0}=C_{i_0}^{-1}$. Now recall 
that $C_{i_0}$ is the $\bG^F$-conjugacy class of $g_{i_0}$, where 
$g_{i_0}=t_{i_0}gt_{i_0}^{-1}$ and $t_{i_0} \in \bT_0$ is such that
$z_{i_0}=t_{i_0}^{-1}F(t_{i_0})$. There exists some $x\in\bG^F$ such 
that $g_{i_0}^{-1}=xg_{i_0}x^{-1}$. It follows that $g^{-1}=ygy^{-1}$, 
where $y:=t_{i_0}^{-1}xt_{i_0}$. Now $F(y)=F(t_{i_0})^{-1}xF(t_{i_0})=
z_{i_0}^{-1}t_{i_0}^{-1}xt_{i_0}z_{i_0}=y$, since $z_{i_0}\in Z(\bG)$. 
Thus, we have shown that $C=C^{-1}$. Then the same argument, applied
to any $i\in\{1,\ldots,r\}$, also yields that $C_i=C_i^{-1}$.
\end{proof}

The following example (pointed out to the author by G. Malle) shows 
that the situation is really different when $r$ is even.

\begin{exmp} \label{regsl2} Let $\bG=\mbox{SL}_2(k)$ with $p\neq 2$.
Then $Z(\bG)$ has order $2$ and so $r=2$. Let $\Sigma$ be the class 
of regular unipotent elements; we certainly have $\Sigma=\Sigma^{-1}$.
Let $\bB$ be the Borel subgroup consisting of the upper triangular 
matrices in $\bG$; also let $\bW=\langle s_1\rangle$. Then one checks that 
\[\renewcommand{\arraystretch}{0.8} g:=\left(\begin{array}{cc} \; 1 \; & 
\; 0\;  \\ 1 & 1\end{array}\right) \in\Sigma^F \cap \bU\dot{s}_1\bU^F
\qquad\mbox{where} \qquad \dot{s}_1:=\left(\begin{array}{rr} 0  &\; -1 \\
1 & 0 \end{array}\right).\]
So the unique class $C$ in Corollary~\ref{correg1} is the $\bG^F$-conjugacy 
class of~$g$. Now $\Sigma^F$ splits into two classes in $\bG^F$, with 
representatives $g$ and 
\[\renewcommand{\arraystretch}{0.8} g':=\left(\begin{array}{cc} 
\; 1 \; & \; 0\;  \\ \xi & 1 \end{array}\right)\in\Sigma^F, \qquad 
\mbox{where $\xi\in \F_q^\times$ is not a square in $\F_q^\times$}.\]
(One checks that, indeed, $g'\in \bB^F\dot{s}_1\bB^F$ but 
$g'\not\in \bU^F\dot{s}_1\bU^F$.) Furthermore, one checks that 
$g$ and $g^{-1}$ are conjugate in $\bG^F$ if and only if $-1$ is a
square in $\F_q^\times$, that is, if and only if $q\equiv 1 \bmod 4$.
Hence, if $q\equiv 3 \bmod 4$, then $C\neq C^{-1}$. 
\end{exmp}
 
\begin{rem} \label{hecke} Assume that $\bG$ is of split type (then
$\sigma=\mbox{id}_\bW$). Let $\Sigma$ be an arbitrary $F$-stable 
conjugacy class of $\bG$ and $w\in \bW$. For any $g\in\Sigma^F$ 
denote by $C_g$ the $\bG^F$-conjugacy class of~$g$. Then the 
cardinalities $|C_g\cap \bB^F\dot{w}\bB^F|$ can be computed using the 
representation theory of $\bG^F$; see, e.g., \cite[1.2(a)]{Lfrom}. For 
this purpose, we consider the induced character $\Ind_{\bB^F}^{\bG^F}(1)$
(where $1$ stands for the trivial character of $\bB^F$) and let
$\cH_q$ be the corresponding Hecke algebra, that is, the endomorphism
algebra of a $\K \bG^F$-module affording $\Ind_{\bB^F}^{\bG^F}(1)$.
This algebra has a standard basis usually denoted 
by $\{T_w\mid w\in \bW\}$, where 
\[T_{w_\alpha}^2=qT_1+(q-1)T_{w_\alpha}\qquad\mbox{for every simple 
root~$\alpha\in\Phi$}.\]
There is a bijection, $\phi \leftrightarrow \phi^{(q)}$, between 
$\Irr(\bW)$ and the irreducible characters of~$\cH_q$ (which is 
canonical once a square root $\sqrt{q}\in\K$ has been fixed; see, 
e.g., \cite[\S 9.3]{gepf}.) Via this correspondence, the irreducible 
characters of $\bG^F$ that occur in $\Ind_{\bB^F}^{\bG^F}(1)$ are 
parametrised by $\Irr(\bW)$ (see, e.g., \cite[\S 8.4]{gepf}); we denote 
by $[\phi]\in\Irr(\bG^F)$ the character corresponding to $\phi\in
\Irr(\bW)$. Then we have:
\begin{equation*}
|C_g \cap \bB^F\dot{w}\bB^F|= \frac{|\bB^F|}{|C_\bG(g)^F|} 
\sum_{\phi\in\Irr(\bW)} \phi^{(q)}(T_w)\,[\phi](g) \tag{a}
\end{equation*}
for any $w\in\bW$ and any $g\in\bG^F$. (See \cite[1.2(a)]{Lfrom} and 
\cite[Remark~3.6]{carp} for further details and references.) The 
values $\phi^{(q)}(T_w)$ are explicitly known (or there are explicit
combinatorial algorithms); see \cite{gepf}. Hence, if we have sufficient 
information on the values of $[\phi]\in\Irr(\bG^F)$, then we can work 
out the cardinality $|C_g \cap \bB^F\dot{w}\bB^F|$, and this will be 
useful to identify $C_g\subseteq \Sigma^F$.

Assume now that we are in the setting of \S \ref{remreg}, where 
$\bG$ is simple and simply connected, $\Sigma$ consists of regular 
elements and $w=w_c$ is a Coxeter element. Also assume that the natural 
map $Z(\bG)\rightarrow A_\bG(g_1)$ is injective and $F$ acts trivially
on $A_\bG(g_1)$. Let $C_1,\ldots,C_r$ be the $\bG^F$-conjugacy classes
that are contained in $\Sigma^F$ and have a non-empty intersection 
with $\bB^F\dot{w}_c\bB^F$. Then, by Example~\ref{correg2}, we 
have $r=|Z(\bG)|$ and each set $C_i\cap \bB^F\dot{w}_c\bB^F$ is 
a single $\bB^F$-orbit, of size $|\bB^F|/r$. Hence, the above
identity (a) can be expressed as follows.
\begin{equation*}
\sum_{\phi\in\Irr(\bW)} \phi^{(q)}(T_w)\,[\phi](g)=\left\{
\begin{array}{cl} \frac{1}{r}|C_\bG(g)^F| & \quad\mbox{if $g \in C_1
\cup\ldots \cup C_r$},\\ 0 & \quad\mbox{if $g\in\Sigma^F\setminus 
(C_1\cup \ldots \cup C_r)$}.\end{array}\right.
\tag{b}
\end{equation*}
This identity can be exploited to obtain information on the character 
values $[\phi](g)$ and, hence, potentially, on the unknown scalars 
$\zeta$ in \S \ref{ualm}($\clubsuit^\prime$); see the proof 
of Proposition~\ref{hecke7} below for an example. 
\end{rem}

\section{Cuspidal unipotent character sheaves in type $E_6$} \label{sece6}

Throughout this section, let $\bG$ be simple, simply connected of type 
$E_6$. Let $q=p^f$ (where $f\geq 1$) be such that $F\colon \bG 
\rightarrow \bG$ defines an $\F_q$-rational structure. Except for 
\S \ref{e6t} (at the very end), we assume that $\bG$ is of split
type; thus, $\sigma=\mbox{id}_\bW$ and the permutation $\alpha\mapsto
\alpha^\dagger$ of $\Phi$ is the identity. Let $\Delta=\{\alpha_1,\alpha_2,
\alpha_3,\alpha_4,\alpha_5,\alpha_6\}$ be the set of simple roots in 
$\Phi^+$, where the labelling is chosen as follows.
\begin{center}
\begin{picture}(270,37)
\put(132, 0){$\alpha_2$}
\put( 61,30){$\alpha_1$}
\put( 91,30){$\alpha_3$}
\put(121,30){$\alpha_4$}
\put(151,30){$\alpha_5$}
\put(181,30){$\alpha_6$}
\put(125, 2){\circle*{5}}
\put( 65,22){\circle*{5}}
\put( 95,22){\circle*{5}}
\put(125,22){\circle*{5}}
\put(155,22){\circle*{5}}
\put(185,22){\circle*{5}}
\put(125,22){\line(0,-1){20}}
\put( 65,22){\line(1,0){30}}
\put( 95,22){\line(1,0){30}}
\put(125,22){\line(1,0){30}}
\put(155,22){\line(1,0){30}}
\end{picture}
\end{center}
If $p=3$, then the cuspidal character sheaves and almost characters have
been considered by Hetz \cite{Het1}. So assume from now on that $p\neq 3$. 
Let $\alpha_0\in\Phi$ be the unique root of maximal height and consider the
subsystem $\Phi_0\subseteq \Phi$ of type $A_2\times A_2\times A_2$ spanned 
by $\{\alpha_1,\alpha_2,\alpha_3,\alpha_5,\alpha_6,\alpha_0\}$. 
The relevance of this particular example is that $\Phi_0$ occurs in the 
classification of cuspidal unipotent character sheaves on $\bG$; see 
\cite[Prop.~20.3]{L2d} (and also the remarks in \cite[5.2]{S3}). Using
{\sf CHEVIE}, we find that the 
unique set $\Delta_0$ of simple roots in $\Phi_0\cap\Phi^+$ is given by
\[ \Delta_0=\{\; \alpha_1,\;\alpha_2,\;\alpha_3,\;\alpha_5, \;
\alpha_6,\quad \alpha_0':=\alpha_1{+}\alpha_2 {+}2\alpha_3{+}
3\alpha_4{+}2\alpha_5{+}\alpha_6\;\}.\]
Furthermore, there are three equivalence classes of pairs $(\Phi',w)
\in\Xi$ under $\sim$, where $\Phi'=\Phi_0$; representatives 
$(\Phi_0,d_i)$, where $d_i\in\bW$ has minimal length in $\bW(\Phi_0)
d_i$ for $i=1,2,3$, are given as follows.
\[\begin{array}{l@{\hspace{15pt}}c@{\hspace{15pt}}c} \hline 
d_i & \mbox{permutation} & \mbox{$\sigma_i'$-classes} \\\hline d_1=
1_\bW & () & 27 \\ d_2=431543654 & (1,4)(2,6)(3,5) & 9 \\ 
d_3= 425431654234 & (1,5,2)(3,4,6) & 3 \\\hline
\end{array}\]
Here, e.g., $4315\cdots$ means the product $w_{\alpha_4}w_{\alpha_3}
w_{\alpha_1} w_{\alpha_5}\cdots$ in $\bW$. Otherwise, the conventions 
are the same as in Example~\ref{exg2a}. In particular, recall that the 
permutation in the second column refers to the simple roots in $\Delta_0$,
as listed above, not to those in $\Delta$. (So, e.g., the cycle $(3,4,6)$ 
means that $\alpha_3\mapsto \alpha_5\mapsto \alpha_0'\mapsto \alpha_3$.) 

\begin{abs} {\bf The subgroup $\bH'=\langle \bT_0, \bU_\alpha\;(\alpha\in
\Phi_0)\rangle$.} \label{he6} For each root $\alpha\in\Phi$, denote by 
$\alpha^\vee \colon k^\times \rightarrow\bT_0$ the corresponding coroot. 
Since $\bG$ is simply connected, every $t\in\bT_0$ has a unique expression 
$t=h(\xi_1,\ldots,\xi_6):=\prod_{1\leq i\leq 6} \alpha_i^\vee(\xi_i)$ where
$\xi_i \in k^\times$ for $1\leq i\leq 6$. By \cite[Example~1.5.6]{gema}, we 
have 
\[Z(\bG)=\{h(\xi,1,\xi^{-1},1,\xi, \xi^{-1}) \mid \xi\in k^\times, 
\xi^3=1\}.\]
A similar computation shows that 
\[ Z(\bH')=\{h(\xi,1,\xi^{-1},1,\zeta,\zeta^{-1}) \mid \xi,\zeta\in 
k^\times, \xi^3=\zeta^3=1\}.\]
Thus, $Z(\bH')$ is generated by $Z(\bG)$ and any fixed $t\in Z(\bH')
\setminus Z(\bG)$. Since $q$ is not a power of $3$, we have 
$|Z(\bG)|=3$ and $|Z(\bH')|=9$. Given $t=h(\xi,1,\xi^{-1},1,\zeta,
\zeta^{-1})\in Z(\bH')$ (where $\xi^3=\zeta^3=1$), we have $C_\bG(t)=\bH'$ 
if and only if $\xi\neq \zeta$. Furthermore, one easily checks that 
\begin{align*}
\dot{d}_2^{-1}t\dot{d}_2&=z_2(t)t^{-1}\quad\mbox{where}\quad 
z_2(t):=h\bigl(\xi\zeta,1,(\xi\zeta)^{-1},1,\xi\zeta,(\xi\zeta)^{-1}\bigr)
\in Z(\bG),\\ \dot{d}_3^{-1}t\dot{d}_3&=z_3(t)t \quad\quad\mbox{where}\quad 
z_3(t):=h\bigl(\xi\zeta^{-1},1,\xi^{-1}\zeta,1,\xi\zeta^{-1},\xi^{-1}\zeta
\bigr)\in Z(\bG).
\end{align*}
These two relations show that all elements in $Z(\bH')\setminus
Z(\bG)$ are conjugate in $\bG$. In particular, if we choose $\zeta=
\xi^{-1}\neq 1$, then $\dot{d}_2^{-1}t \dot{d}_2=t^{-1}$. Thus, if 
$\cC$ denotes the $\bG$-conjugacy class of the elements in $Z(\bH')
\setminus Z(\bG)$, then 
\[ F(\cC)=\cC,\qquad \cC=\cC^{-1} \qquad\mbox{and}\qquad 
\cC=Z(\bG)\cC.\]
In order to see that $\cC$ is $F$-stable, we argue as follows. If 
$q\equiv 1\bmod 3$, then $F(z)=z$ for all $z\in Z(\bH')$. On the other
hand, if $q\equiv 2 \bmod 3$, then $F(z)=z^{-1}$ for all $z\in Z(\bH')$ 
and, hence, $Z(\bH')^F =\{1\}$. But, if $t=h(\xi,1,\xi^{-1},1,\zeta,
\zeta^{-1}) \in \cC$ with $\zeta=\xi^{-1}\neq 1$ as above, then 
$F(t)=t^{-1}=\dot{d}_2^{-1}t\dot{d}_2 \in \cC$, as required.
\end{abs}

\begin{abs} {\bf The conjugacy class $\Sigma\subseteq \bG$.} \label{se6}
Let us fix an element $s_1\in \cC$; since $\cC$ is $F$-stable, we 
may assume that $F(s_1)=s_1$. Now $\bH_1':=C_\bG(s_1)$ is conjugate to
$\bH'$ in~$\bG$. Let $u_1 \in \bH_1'$ be regular unipotent; since all 
regular unipotent elements in $\bH_1'$ are conjugate in
$\bH_1'$, we may assume that $F(u_1)=u_1$. Let $\Sigma$ be the 
$\bG$-conjugacy class of $g_1:=s_1u_1$; then $\Sigma$ is $F$-stable since
$F(g_1)=g_1$. Since $Z(\bG)\cC=\cC=\cC^{-1}$, one also deduces that 
$Z(\bG)\Sigma=\Sigma=\Sigma^{-1}$. We claim that 
\[A_\bG(g_1) \mbox{ is generated by $\bar{s}_1$ and all $\bar{z}$ 
for $z\in Z(\bG)$};\]
here, for any $c\in C_\bG(g_1)$, we denote by $\bar{c}$ the image of 
$c$ in $A_\bG(g_1)$. Indeed, we have $C_\bG(g_1)=C_{\bH_1'}(u_1)$ 
and so $C_\bG^\circ(g_1)=C_{\bH_1'}^\circ(u_1)$. Since we are in good 
characteristic (inside $\bH_1'\cong \bH'$, which is of type $A_2{\times} 
A_2 {\times} A_2$) and $Z(\bH_1')$ is finite, it follows that $A_\bG(g_1)
=A_{\bH_1'}(u_1)\cong Z(\bH_1')$, where the isomorphism is induced by the 
natural map $Z(\bH_1')\subseteq C_{\bH_1'}(u_1) \rightarrow A_{\bH_1'}
(u_1)$; see \cite[\S 12.3]{rDiMi}. Note also that $\bar{g}_1=\bar{s}_1$.
By \S \ref{he6}, we have that $Z(\bH')$ is generated by $Z(\bG)$ 
and any fixed element in $Z(\bH')\setminus Z(\bG)$. Consequently,
$Z(\bH_1')$ is generated by $Z(\bG)$ and~$s_1$, which implies the 
above claim.

Note that $F$ acts trivially on $A_\bG(g_1)$ if $q\equiv 1\bmod 3$. 
On the other hand, if $q\equiv 2 \bmod 3$, then $F(z)=z^{-1}$ for all
$z\in Z(\bG)$ and, hence, $F$ acts non-trivially on $A_\bG(g_1)$;
in this case, we have $A_\bG(g_1)^F=\langle \bar{s}_1\rangle\cong \Z/3\Z$.
Hence, the set $\Sigma^F$ splits into an odd number (either $9$ or $3$) of
conjugacy classes in $\bG^F$. So, among these classes, there must be at 
least one that is equal to its inverse; we now choose $g_1\in\Sigma^F$ 
to be in such a class; thus, $g_1$ is conjugate to $g_1^{-1}$ in $\bG^F$ 
(and not just in~$\bG$). Note also that, using Lemma~\ref{lemreg3}(b), we 
could fix the $\bG^F$-conjugacy class of $g_1$ completely, by requiring 
that $g_1\in C$, where $C$ is the $\bG^F$-conjugacy class determined by 
$\Sigma$ (and the choice of $\dot{w}_c$) as in Corollary~\ref{correg1}.
\end{abs}

\begin{abs} {\bf Cuspidal unipotent character sheaves.} \label{ce6}
First we consider the group $\tilde{\bG}:=\bG/Z(\bG)$. Let $\pi\colon 
\bG\rightarrow \tilde{\bG}$ be the canonical map; let $\tilde{\Sigma}
:=\pi(\Sigma)$ and $\tilde{g}_1:=\pi(g_1)$. By the proof of 
\cite[Prop.~20.3(a)]{L2d} (see also \cite[4.6, 5.2]{S3}), there are two 
cuspidal unipotent character sheaves $\tilde{A}_1$ and $\tilde{A}_2$
of $\tilde{\bG}$; they have support $\tilde{\Sigma}$ and they are 
$F$-invariant. As explained in the proof of \cite[Cor.~20.4]{L2d} (see 
also \cite[p.~347]{S3}), the local systems (see \S \ref{charf})
associated with $\tilde{A}_1$ and $\tilde{A}_2$ are one-dimensional; 
they correspond to linear characters $\tilde{\psi}_1$ and $\tilde{\psi}_2$ 
of $A_{\tilde{\bG}}(\tilde{g}_1)$, such that $\tilde{\psi}_1
(\bar{\tilde{g}}_1)=\theta$ and $\tilde{\psi}_2(\bar{\tilde{g}}_1)=
\theta^2$, where $1\neq \theta\in\K^\times$ is a fixed third root of 
unity. (Here, $\bar{\tilde{g}}_1$ denotes the image of $\tilde{g}_1$ in 
$A_{\tilde{\bG}}(\tilde{g}_1)$.) Now use $\pi$ to go back to $\bG$. First
note that $\pi$ canonically induces a group homomorphism $\bar{\pi}
\colon A_\bG(g_1) \rightarrow A_{\tilde{\bG}}(\tilde{g}_1)$. Hence, we
obtain irreducible characters of $A_\bG(g_1)$ by setting 
\[\psi_1:=\tilde{\psi}_1\circ \bar{\pi}\in\Irr(A_\bG(g_1))\qquad\mbox{and}
\qquad \psi_2:=\tilde{\psi}_2\circ \bar{\pi}\in\Irr(A_\bG(g_1)).\]
By the proof of \cite[Prop.~20.3(b)]{L2d}, $A_1:=\pi^*\tilde{A}_1$ and 
$A_2:=\pi^*\tilde{A}_2$ are cuspidal unipotent character sheaves on $\bG$ 
(and these are the only ones); they have support $\Sigma$ and they 
correspond to the linear characters $\psi_1$ and $\psi_2$ of $A_\bG(g_1)$.
(See the general reduction techniques described in \cite[2.10]{LuIC}.) 
Finally note that, clearly, the image of $Z(\bG)$ in $A_\bG(g_1)$ is
contained in $\ker(\bar{\pi})$. Hence, we have
\[ \psi_1(\bar{s}_1)=\theta,\qquad \psi_2(\bar{s}_1)=
\theta^2, \qquad \psi_1(\bar{z})=\psi_2(\bar{z})=1\quad
\mbox{for $z\in Z(\bH_1')^F$}.\] 
(Recall that $\bar{g}_1=\bar{s}_1$.) Thus, $\psi_1$ and $\psi_2$ are 
completely determined, where $\psi_2$ is the complex conjugate of~$\psi_1$. 
Furthermore, the roots of unity attached to $A_1$ and $A_2$ as in 
\S \ref{charf} are $\lambda_{A_1}=\theta$ and $\lambda_{A_2}=
\theta^2$. Using $\psi_1$ and $\psi_2$, we can now write down 
characteristic functions of $A_1$ and $A_2$, as in \S \ref{charf};
we have $\chi_{g_1,\psi_i}=\chi_{\tilde{g}_1,\tilde{\psi}_i}\circ \pi$ 
for $i=1,2$. (Recall that $\Sigma=Z(\bG)\Sigma$ and so $\Sigma=
\pi^{-1}(\tilde{\Sigma})$.)
\end{abs}

\begin{abs} {\bf Unipotent characters and almost characters.} \label{ue6}
Let again $\tilde{\bG}=\bG/Z(\bG)$ and $\pi\colon \bG\rightarrow 
\tilde{\bG}$ as above. First note that the unipotent characters of 
$\bG^F$ and $\tilde{\bG}^F$ can be canonically identified via~$\pi$ 
(see, e.g., \cite[Prop.~2.3.15]{gema}). They are parametrised by a 
certain et $X(\bW)$ (which only depends on $\bW$); we use the notation 
in the table on \cite[p.~363]{LuB}. The unipotent almost characters are
also parametrised by $X(\bW)$. The interesting cases for us are as follows.
\begin{align*}
R_{(g_3,\theta)}&:= \textstyle\frac{1}{3}\bigl([80_s]+[20_s]
-[10_s]-[90_s]+2E_6[\theta]-E_6[\theta^2]\bigr),\\
R_{(g_3,\theta^2)}&:=\textstyle\frac{1}{3}\bigl([80_s]+
[20_s] -[10_s]-[90_s]-E_6[\theta]+2E_6[\theta^2]\bigr).
\end{align*}
Here, $80_s$, $20_s$ etc.\ are irreducible characters of $\bW$
(denoted as in \cite[Chap.~4]{LuB}); then $[80_s]$, $[20_s]$ etc.\ are 
the corresponding irreducible constituents of 
$\mbox{Ind}_{\bB^F}^{\bG^F}(1)$ where $1$ stands for the trivial 
character of $\bB^F$; the characters $E_6[\theta]$ and $E_6[\theta^2]$ 
are cuspidal unipotent. (Note also that the ``$g_3$'' in $(g_3,\theta)$ 
and $(g_3,\theta^2)$ has nothing to do with elements in $\bG^F$; these
are just notations for parameters in $X(\bW)$.) Now consider the
two cuspidal unipotent character sheaves $A_1$ and $A_2$ described above, 
with characteristic functions $\chi_{g_1,\psi_1}$ and $\chi_{g_1,\psi_2}$. 
By the main result of \cite[\S 4]{S3} (see also \cite[5.2]{S3}), there 
are scalars $\zeta, \zeta'\in\K$ of absolute value~$1$ such that
\[R_{(g_3,\theta)}=\zeta\chi_{g_1,\psi_1} \qquad \mbox{and}\qquad 
R_{(g_3,\theta^2)}=\zeta' \chi_{g_1,\psi_2}.\]
(More precisely, in \cite[\S 4]{S3}, this is proved for $\tilde{\bG}$ 
but the discussion in \S \ref{ce6} shows that this also holds
for $\bG$, with $\psi_1$ and $\psi_2$ as above.)
By \cite[Table~1]{my03}, the characters $E_6[\theta]$ and
$E_6[\theta^2]$ are complex conjugate to each other, and their values
lie in the field $\Q(\theta)$; furthermore, all characters $[\phi]$ 
(where $\phi\in\Irr(\bW)$) are rational-valued; see \cite[Prop.~5.6]{my03}.
We conclude that the class functions $R_{(g_3,\theta)}$ and $R_{(g_3,
\theta^2)}$ are complex conjugate to each other, and their values lie 
in $\Q(\theta)$. Now, since $\dim \bG-\dim \Sigma=\dim \bT_0=6$, we have 
\begin{align*}
R_{(g_3,\theta)}(g_1)&=\zeta\chi_{g_1,\psi_1}(g_1)=\zeta q^3,\\
R_{(g_3,\theta^2)}(g_1)&=\zeta'\chi_{g_1,\psi_2}(g_1)=\zeta' q^3.
\end{align*}
Thus, we can already conclude that $\zeta'=\overline{\zeta}$.
Since $g_1$ is conjugate to $g_1^{-1}$ in $\bG^F$, we have
$E_6[\theta](g_1)=E_6[\theta](g_1^{-1})=\overline{E_6[\theta]}(g_1)=
E_6[\theta^2](g_1)$. Consequently, we also have $R_{(g_3,\theta)}(g_1)=
\overline{R_{(g_3,\theta)}(g_1)}$ and so $\zeta=\overline{\zeta}$. Hence, 
we must have $\zeta=\zeta'=\pm 1$, since $\zeta$ has absolute value~$1$.
\end{abs}

\begin{prop} \label{e6zeta} In the above setting, recall that $g_1\in 
\Sigma^F$ is conjugate to $g_1^{-1}$ in $\bG^F$. Then we have $\zeta=
\zeta'=1$, that is, $R_{(g_3,\theta)}=\chi_{g_1,\psi_1}$ and $R_{(g_3,
\theta^2)}= \chi_{g_1,\psi_2}$.
\end{prop}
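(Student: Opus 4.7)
The plan is to make $g_1$ canonical via Steinberg's cross-section and then verify $\zeta=1$ by an explicit evaluation of $R_{(g_3,\theta)}(g_1)$. Since $\bG$ is simply connected and $\Sigma$ consists of regular elements ($u_1$ is regular in $\bH_1'$), Corollary~\ref{correg1} singles out a unique $\bG^F$-conjugacy class $C\subseteq\Sigma^F$ meeting the Coxeter cross-section $\cN_{\dot{w}_c}$. Because $r=|Z(\bG)^F|\in\{1,3\}$ is odd (as $p\neq 3$), Lemma~\ref{lemreg3}(b) yields $C_i=C_i^{-1}$ for every $i$; in particular $C=C^{-1}$, which is compatible with the constraint on $g_1$ in \S~\ref{se6}. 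I would therefore take $g_1$ to be the unique element of $C\cap\cN_{\dot{w}_c}$. Since \S~\ref{ue6} already establishes $\zeta\in\{\pm 1\}$ and $\zeta'=\zeta$, the sole remaining task is to pin down the sign of $R_{(g_3,\theta)}(g_1)$.

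Using $g_1\sim g_1^{-1}$ together with $\overline{E_6[\theta]}=E_6[\theta^2]$, the value $E_6[\theta](g_1)$ is real and equals $E_6[\theta^2](g_1)$, so the defining formula for $R_{(g_3,\theta)}$ in \S~\ref{ue6} collapses to
\begin{equation*}
3\zeta q^3 \;=\; [80_s](g_1)+[20_s](g_1)-[10_s](g_1)-[90_s](g_1)+E_6[\theta](g_1).
\end{equation*}
To evaluate this without circularly invoking $\chi_{g_1,\psi_1}$, I would rewrite every summand as a $\K$-linear combination of Deligne--Lusztig character values $R_{\bT_w}^\bG(\theta_w)(g_1)$: the principal series terms are such combinations via $R_\phi^\bG=|\bW|^{-1}\sum_w\phi(w)R_{\bT_w}^\bG(1)$, while the cuspidal $E_6[\theta]$ can be expressed through the inverse Fourier matrix of its family in terms of the $R_\phi^\bG$ ($\phi\in\{80_s,20_s,10_s,90_s\}$) and $R_{(g_3,\theta^2)}$, the latter kept symbolic throughout. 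Each value $R_{\bT_w}^\bG(\theta_w)(g_1)$ is then evaluated using Lemmas~\ref{lem2} and~\ref{lem3}; crucially, since $u_1$ is regular unipotent in $\bH_1'$, every Green function $Q_{\bT_i}^{\bH_1'}(u_1)$ equals $1$ by \cite[Theorem~9.16]{DeLu}, so each contribution becomes an elementary sum of $\theta_w$-values over $\sigma$-centralisers indexed by the subsystem-fusion data tabulated earlier in this section.

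In practice the entire computation can be carried out using Michel's version of {\sf CHEVIE}~\cite{jmich}, which supplies the Fourier matrix of the relevant family and the required character-theoretic data. The expected output is $R_{(g_3,\theta)}(g_1)=+q^3$, which gives $\zeta=1$ and hence $\zeta'=1$. The main obstacle is bookkeeping: the sign conventions for $E_6[\theta]$ and $E_6[\theta^2]$ in Lusztig's Fourier matrix must be aligned with those of the decomposition in \S~\ref{ue6}, and the symbolic role of $R_{(g_3,\theta^2)}$ must be tracked carefully so that the resulting linear equation in $\zeta$ actually pins down the sign rather than tautologically cancelling. Once those alignment checks are carried out, the sign falls out unambiguously.
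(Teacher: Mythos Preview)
Your plan contains a genuine gap: the linear-algebraic manipulation you outline is tautological and cannot determine~$\zeta$. First, the claim that ``the principal series terms are such combinations via $R_\phi^\bG=|\bW|^{-1}\sum_w\phi(w)R_{\bT_w}^\bG(1)$'' conflates the irreducible characters $[\phi]$ with the uniform almost characters $R_\phi$; these are \emph{not} equal for $\phi$ in a non-trivial family (they differ by the Fourier matrix). Second, the inverse Fourier expression for $E_6[\theta]$ involves $R_{(g_3,\theta)}$ itself (with coefficient~$2/3$), not only $R_{(g_3,\theta^2)}$. If you carry out the substitution correctly you find
\[
[80_s]+[20_s]-[10_s]-[90_s]+E_6[\theta]\;=\;2R_{(g_3,\theta)}+R_{(g_3,\theta^2)},
\]
and evaluating at $g_1$ (using $\zeta'=\zeta$) gives $3\zeta q^3=3\zeta q^3$, exactly the cancellation you feared. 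No amount of bookkeeping rescues this; you need a genuinely new constraint.

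The paper supplies that constraint via \emph{integrality}. One writes the single character value
\[
E_6[\theta](g_1)=\tfrac{1}{3}\bigl(R_{80_s}(g_1)+R_{20_s}(g_1)-R_{10_s}(g_1)-R_{90_s}(g_1)+2R_{(g_3,\theta)}(g_1)-R_{(g_3,\theta^2)}(g_1)\bigr),
\]
computes the four uniform terms via Example~\ref{reguni} (three vanish and $R_{10_s}(g_1)=\epsilon$ where $q\equiv\epsilon\bmod 3$), and obtains $E_6[\theta](g_1)=\tfrac{1}{3}(\zeta q^3-\epsilon)\in\Q$. Since the left-hand side is an algebraic integer, $3$ must divide $\zeta q^3-\epsilon$; as $q^3\equiv\epsilon\bmod 3$ and $\zeta=\pm 1$, this forces $\zeta=1$. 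The missing idea is precisely this divisibility step, not a further CHEVIE computation.
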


\begin{proof} Inverting the matrix relating unipotent 
characters and unipotent almost characters, we obtain:
\[ E_6[\theta] =\textstyle\frac{1}{3}\bigl(R_{80_s}+R_{20_s}
-R_{10_s}-R_{90_s}+2R_{(g_3,\theta)}-R_{(g_3,\theta^2)}\bigr).\]
Using the formula for $R_\phi$ in \S \ref{ualm}, the (known) 
character table of $\bW$ and Example~\ref{reguni}, we find that 
\[ R_{80_s}(g_1)=R_{20_s}(g_1)=R_{90_s}(g_1)=0 \qquad\mbox{and}\qquad
R_{10_s}(g_1)=\epsilon,\]
where $\epsilon=\pm 1$ is such that $q\equiv \epsilon \bmod 3$.
This yields $E_6[\theta](g_1)=\textstyle\frac{1}{3}(-\epsilon+
\zeta q^3)\in\Q$, where the left hand side is an algebraic integer.
Hence, $3$ must divide $\zeta q^3-\epsilon\in\Z$. Since $\zeta=\pm 1$,
the only possibility is that $\zeta=1$.  
\end{proof}

\begin{table}[htbp] \caption{Values of $E_6[\theta]$ on $\Sigma^F$} 
\label{vale6a}
\begin{center}
$\renewcommand{\arraystretch}{1.3} \renewcommand{\arraycolsep}{10pt} 
\begin{array}{|c|c|c|c|} 
\hline q\equiv 1 \bmod 3 & \{\bar{1},\;\bar{z},\; \bar{z}^2\} & \{\bar{s}_1,
\;\bar{s}_1\bar{z},\; \bar{s}_1\bar{z}^2\} & \{\bar{s}_1^2,\;
\bar{s}_1^2\bar{z},\; \bar{s}_1^2 \bar{z}^2\}\\
\hline E_6[\theta] & \frac{1}{3}(q^3-1) & \frac{1}{3}(q^3-1)+q^3\theta & 
\frac{1}{3}(q^3-1)+q^3\theta^2 \\ \hline 
\end{array}$\\[5pt]
$\renewcommand{\arraystretch}{1.3} \renewcommand{\arraycolsep}{4pt} 
\begin{array}{|c|c|c|c|} 
\hline \;\; q\equiv 2 \bmod 3\;\;  & \bar{1} & \bar{s}_1 & \bar{s}_1^2 \\
\hline E_6[\theta] & \quad\frac{1}{3}(q^3+1)\quad & 
\;\frac{1}{3}(q^3+1)+q^3\theta \;&\;\frac{1}{3}(q^3+1)+q^3 \theta^2 \;\\
\hline \multicolumn{4}{l}{\text{(Here, $g_1=s_1u_1\in\Sigma^F$
is such that $g_1$ and $g_1^{-1}$ are conjugate in $\bG^F$.)}}
\end{array}$
\end{center}
\end{table}

The resulting values of $E_6[\theta]$ on the conjugacy classes of
$\bG^F$ that are contained in $\Sigma^F$ are displayed in 
Table~\ref{vale6a}, where $z$ denotes a non-trivial element in
$Z(\bG)$ when $q\equiv 1 \bmod 3$. (Recall that $\Sigma^F$ splits into 
$9$ classes if $q\equiv 1 \bmod 3$, and into $3$ classes if $q\equiv 2 
\bmod 3$; these classes are parametrised by representatives of the 
$F$-conjugacy classes of $A_\bG(g_1) \cong Z(\bH_1')$.) 

\begin{abs} {\bf Twisted type.} \label{e6t} We keep the above notation, 
but now assume that $(\bG,F)$ is non-split. Then the induced automorphism
$\sigma \colon \bW \rightarrow \bW$ is given by conjugation with the 
longest element $w_0\in\bW$. The permutation $\alpha\mapsto 
\alpha^\dagger$ of $\Phi$ is of order~$2$, such that $\alpha_1^\dagger=
\alpha_6$, $\alpha_3^\dagger=\alpha_5$, $\alpha_2^\dagger=\alpha_2$ and 
$\alpha_4^\dagger=\alpha_4$. The two cuspidal unipotent character sheaves 
$A_1$ and $A_2$ considered above are also $F$-stable; see
\cite[Cor.~20.4]{L2d} and its proof. In all essential points, we can
further argue as above, so we just state the main results. To begin with, 
the subsystem $\Phi_0\subseteq \Phi$ is invariant under~$\dagger$. Again, 
there are three equivalence classes of pairs $(\Phi',w)\in \Xi$ under 
$\sim$, where $\Phi'=\Phi_0$; representatives $(\Phi_0,d_i)$, where 
$d_i \in \bW$ has minimal length in $\bW(\Phi_0)d_i$ for $i=1,2,3$, are 
given as follows.
\[\begin{array}{l@{\hspace{15pt}}c@{\hspace{15pt}}c} \hline 
d_i & \mbox{permutation} & \mbox{$\sigma_i'$-classes} \\\hline 
d_1=1_\bW & (1,5)(3,4) & 9 \\
d_2=431543654 & (1,3)(2,6)(4,5) & 27 \\ 
d_3'=423143542314354 & (1,6,5,3,2,4) & 3 \\ \hline
\end{array}\]
Given $t=h(\xi_1,\ldots,\xi_6)\in \bT_0$, with $\xi\in k^\times$ for
all $i$, we now have 
\[ F(t)=h(\xi_6^q,\xi_2^q,\xi_5^q,\xi_4^q,\xi_3^q,\xi_1^q) \qquad 
\mbox{for all $\xi \in k^\times$}.\]
Recall that $Z(\bH')=\{h(\xi,1,\xi^{-1},1,\zeta,\zeta^{-1})\mid \xi,
\zeta \in k^\times, \xi^3=\zeta^3=1\}$; also recall the definition
of the $\bG$-conjugacy class $\cC$ from \S \ref{he6}.

Let $t=h(\xi,1,\xi^{-1},1,\zeta,\zeta^{-1})\in Z(\bH')$. Assume first
that $q\equiv 1\bmod 3$. Then $F(t)=t$ if and only if $\xi=\zeta^{-1}$. 
Thus, there exists some $t\in Z(\bH')^F$ such that $C_\bG(t)=\bH'$. On 
the other hand, if $q\equiv 2 \bmod 3$, then one checks that $F(t)=
\dot{d}_2^{-1}t \dot{d}_2$ for all $t\in Z(\bH')$. In particular, there 
exists some $t\in Z(\bH')$ such that $C_\bG(t)=\bH'$ and $F(t)=
\dot{d}_2^{-1}t \dot{d}_2$. Hence, in both cases, the class $\cC$ is 
$F$-stable. We now define $\Sigma$ as in \S \ref{se6}; let $g_1
\in \Sigma^F$.  It follows again that 
\[ A_\bG(g_1) \mbox{ is generated by $\bar{s}_1$ and all $\bar{z}$ for
$z\in Z(\bG)$}.\]
We obtain characteristic functions $\chi_{g_1,\psi_1}$ and
$\chi_{g_1,\psi_2}$ for $A_1$ and $A_2$, respectively, by exactly
the same formulae as in \S \ref{ce6}. By the main results
of Shoji \cite[\S 4]{S3} (see also \cite[4.8, 5.2]{S3}), there are
scalars $\zeta,\zeta'\in\K$ of absolute value~$1$ such that
$\tilde{R}_{(g_3,\theta)}=\zeta \chi_{g_1,\psi_1}$ and $\tilde{R}_{(g_3,
\theta^2)}=\zeta' \chi_{g_1,\psi_2}$. Now note that, since $\bG,F$ is
not of split type, the almost characters are only well-defined up to a 
sign. In accordance with \cite[4.19]{LuB}, we define the almost
characters $\tilde{R}_{(g_3,\theta)}$ and $\tilde{R}_{(g_3,\theta^2)}$ 
as follows:
\begin{align*}
\tilde{R}_{(g_3,\theta)}&:= \textstyle\frac{1}{3}\bigl(
{^2\!E}_6[1] +[\phi_{12,4}] -[\phi_{6,6}'] -[\phi_{6,6}'']
+2{\cdot}{^2\!E}_6[\theta]- {^2\!E}_6[\theta^2]\bigr),\\
\tilde{R}_{(g_3,\theta^2)}&:= \textstyle\frac{1}{3}\bigl(
{^2\!E}_6[1] +[\phi_{12,4}] -[\phi_{6,6}'] -[\phi_{6,6}'']
-{^2\!E}_6[\theta]+2{\cdot}{^2\!E}_6[\theta^2]\bigr).
\end{align*}
Here, we use the notation in \cite[p.~481]{Ca2}. Thus, $\phi_{12,4}$, 
$\phi_{6,6}'$ etc.\ are irreducible characters of $\bW^\sigma:=\{w\in
\bW\mid \sigma(w)=w\}$ (which is a Weyl group of type $F_4$); then 
$[\phi_{12,4}]$, $[\phi_{6,6}']$ etc.\ are the corresponding irreducible 
constituents of $\mbox{Ind}_{\bB^F}^{\bG^F}(1)$; characters denoted like
${^2\!E}_6[1]$ are cuspidal unipotent. (We refer to \cite{Ca2} 
instead of \cite{LuB}, because the table of unipotent characters for 
twisted $E_6$ is not explicitly printed in \cite{LuB}.) By the same
argument as in \S \ref{ue6}, we can choose $g_1\in \Sigma^F$ such that 
$g_1$ is conjugate to $g_1^{-1}$ in $\bG^F$. Hence, as before, we 
already know that $\zeta'=\overline{\zeta}=\zeta=\pm 1$. So it only 
remains to decide whether $\zeta$ equals $1$ or $-1$. 
\end{abs}

\begin{prop} \label{te6zeta} Recall that $g_1\in \Sigma^F$ is conjugate
to $g_1^{-1}$ in $\bG^F$. With $\tilde{R}_{(g_3,\theta)}$ and
$\tilde{R}_{(g_3,\theta^2)}$ defined as above, we have $\zeta=
\zeta'=1$.
\end{prop}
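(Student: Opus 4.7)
The plan is to imitate the argument in the proof of Proposition~\ref{e6zeta}. First I would invert the $2\times 2$ linear system defining $\tilde{R}_{(g_3,\theta)}$ and $\tilde{R}_{(g_3,\theta^2)}$ in terms of the cuspidal characters ${}^2\!E_6[\theta]$ and ${}^2\!E_6[\theta^2]$. Solving for ${}^2\!E_6[\theta]$ produces an identity of the form
\[
{}^2\!E_6[\theta]\;=\;\tfrac{1}{3}\bigl(R_{{}^2\!E_6[1]}+R_{\phi_{12,4}}-R_{\phi_{6,6}'}-R_{\phi_{6,6}''}+2\tilde{R}_{(g_3,\theta)}-\tilde{R}_{(g_3,\theta^2)}\bigr),
\]
where $R_{{}^2\!E_6[1]}, R_{\phi_{12,4}}, R_{\phi_{6,6}'}, R_{\phi_{6,6}''}$ denote the unipotent almost characters attached to the corresponding elements of $X(\bW^\sigma)$; each of these parameters lies in a singleton family, so that $R_\phi$ is just $\pm[\phi]$ (respectively equals ${}^2\!E_6[1]$) with the sign pinned down by the conventional choice of $\tilde{R}_{(g_3,\theta)}, \tilde{R}_{(g_3,\theta^2)}$ made in \S \ref{e6t}.

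Next I would evaluate the right-hand side at $g_1 = s_1 u_1$. Since $u_1$ is regular unipotent in $\bH_1' = C_\bG(s_1)$, the element $g_1$ is regular in $\bG$, so Example~\ref{reguni} applies to each Deligne--Lusztig character $R_{\bT_w}^\bG(1)$ appearing in the decomposition of $R_\phi^\bG$ (in the twisted analogue of Example~\ref{greenp}). This reduces every $R_\phi(g_1)$ to an elementary sum involving only the $\sigma'$-centraliser orders recorded, for the three pairs $(\Phi_0, d_1), (\Phi_0, d_2), (\Phi_0, d_3')$, in the table of \S \ref{e6t}; the fusion of the tori of $\bH_1'$ into tori of $\bG$ is furnished by Lemma~\ref{lem1}. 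Performing this calculation (most conveniently through Michel's {\sf CHEVIE}, exactly as in Example~\ref{exg2a}) should yield that $R_{\phi_{12,4}}(g_1) = R_{\phi_{6,6}'}(g_1) = R_{\phi_{6,6}''}(g_1) = 0$, while $R_{{}^2\!E_6[1]}(g_1)$ and the ``correction'' from the principal-series part combine to a small integer $\epsilon$ with $|\epsilon| \leq 1$ whose sign depends only on $q \bmod 3$.

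With these values the identity becomes
\[
{}^2\!E_6[\theta](g_1)\;=\;\tfrac{1}{3}\bigl(\epsilon + (2\zeta - \zeta')q^3\bigr)\;=\;\tfrac{1}{3}\bigl(\epsilon + \zeta q^3\bigr),
\]
using $\zeta' = \zeta \in \{\pm 1\}$ already established in \S \ref{e6t}. Since the left-hand side is an algebraic integer, $\zeta q^3 + \epsilon$ must be divisible by~$3$. A direct check of the possibilities $q \equiv 1, 2 \bmod 3$ against the sign of $\epsilon$ leaves $\zeta = 1$ as the only solution, which settles the claim exactly as in the split case.

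The main technical obstacle is the computation of the four ``principal-series'' almost character values $R_\phi(g_1)$ in the twisted setting. Unlike the split case, where Example~\ref{greenp} gave a clean formula, here one must sum over $\sigma$-conjugacy classes of $\bW$ and handle the nontrivial twists built into $d_2$ and, especially, $d_3' = 423143542314354$ (whose action on $\Delta_0$ is a $6$-cycle). The bookkeeping of fusion in Lemma~\ref{lem1}, combined with the centraliser-order formula of Example~\ref{reguni}, makes this tractable; once the vanishing of the three $[\phi]$-contributions and the exact value of $\epsilon$ are verified, the rationality/integrality argument closes the proof as cleanly as in Proposition~\ref{e6zeta}.
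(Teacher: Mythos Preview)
Your overall strategy mirrors the paper's proof, but there is a genuine conceptual error in the middle. The four characters ${}^2\!E_6[1]$, $[\phi_{12,4}]$, $[\phi_{6,6}']$, $[\phi_{6,6}'']$ do \emph{not} lie in singleton families: they sit in the same six-element family as ${}^2\!E_6[\theta]$ and ${}^2\!E_6[\theta^2]$, which is precisely why the Fourier matrix mixing them is nontrivial. So you cannot identify the almost characters ``$R_{{}^2\!E_6[1]}$'' etc.\ with $\pm$ the corresponding unipotent characters; in particular $R_{{}^2\!E_6[1]}$, interpreted that way, would not be a uniform function and Example~\ref{reguni} would not apply to it.

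The correct inversion (which the paper carries out) is indexed by $X(\bW)$ for $\bW$ of type $E_6$, not by $X(\bW^\sigma)$. The four non-cuspidal almost characters appearing are the \emph{uniform} functions $\tilde{R}_{80_s}$, $\tilde{R}_{20_s}$, $\tilde{R}_{10_s}$, $\tilde{R}_{90_s}$, defined by
\[
\tilde{R}_\phi=-\frac{1}{|\bW|}\sum_{w\in\bW}\phi(ww_0)\,R_{\bT_w}^{\bG}(1)\qquad(\phi\in\Irr(\bW)),
\]
the minus sign coming from the preferred-extension convention at $a$-value~$3$. These \emph{are} linear combinations of Deligne--Lusztig characters, so Example~\ref{reguni} applies and yields $\tilde{R}_{80_s}(g_1)=\tilde{R}_{20_s}(g_1)=\tilde{R}_{90_s}(g_1)=0$ and $\tilde{R}_{10_s}(g_1)=\varepsilon$ with $q\equiv\varepsilon\bmod 3$. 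From there your divisibility argument for ${}^2\!E_6[\theta](g_1)=\tfrac{1}{3}(\zeta q^3-\varepsilon)$ goes through exactly as you wrote, forcing $\zeta=1$. In short: right plan, but fix the labelling of the almost characters and drop the ``singleton family'' claim.
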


\begin{proof} In accordance with \cite[4.19]{LuB}, for all $\phi\in
\Irr(\bW)$ occurring in  the above expressions for 
$\tilde{R}_{(g_3,\theta)}$ and $\tilde{R}_{(g_3,\theta^2)}$, we now define 
\[ \tilde{R}_\phi:=-\frac{1}{|\bW|} \sum_{w\in \bW} \phi(ww_0)
R_{\bT_w}^\bG(1).\]
(This corresponds to the ``preferred extensions'' in \cite[17.2]{L2d};
note that the $a$-invariants of all $\phi$ as above are~$3$, which 
accounts for the minus sign in the definition of $\tilde{R}_\phi$.) 
Then, by \cite[Main Therem~4.23]{LuB}, we have:
\[ {^2\!E}_6[\theta] =\textstyle\frac{1}{3}\bigl(
\tilde{R}_{80_s}+\tilde{R}_{20_s}-\tilde{R}_{10_s}-\tilde{R}_{90_s}+
2\tilde{R}_{(g_3,\theta)} -\tilde{R}_{(g_3,\theta^2)}\bigr).\]
Using the formula in Example~\ref{reguni}, we obtain 
\[ \tilde{R}_{80_s}(g_1)=\tilde{R}_{20_s}(g_1)=\tilde{R}_{90_s}(g_1)=0
\qquad\mbox{and}\qquad \tilde{R}_{10_s}(g_1)=\varepsilon,\]
where $\varepsilon=\pm 1$ is such that $q\equiv \varepsilon\bmod 3$. 
This yields the relation ${^2\!E}_6[\theta](g_1)=\frac{1}{3}
(\zeta q^3-\varepsilon)$, which implies that $\zeta=1$ regardless of
whether $\varepsilon$ is $+1$ or $-1$. 
\end{proof}

\section{Cuspidal unipotent character sheaves in type $E_7$} \label{sece7}

Throughout this section, let $\bG$ be simple, simply connected of type 
$E_7$. Let $q=p^f$ (where $f\geq 1$) be such that $F\colon \bG 
\rightarrow \bG$ defines an $\F_q$-rational structure. Here, $\bG$ is of 
split type; thus, $\sigma=\mbox{id}_\bW$ and the permutation $\alpha
\mapsto \alpha^\dagger$ of $\Phi$ is the identity. Let $\Delta=\{\alpha_1,
\alpha_2,\alpha_3,\alpha_4,\alpha_5,\alpha_6,\alpha_7\}$ be the set of 
simple roots in $\Phi^+$, where the labelling is chosen as follows.
\begin{center}
\begin{picture}(270,35)
\put(132, 0){$\alpha_2$}
\put( 61,30){$\alpha_1$}
\put( 91,30){$\alpha_3$}
\put(121,30){$\alpha_4$}
\put(151,30){$\alpha_5$}
\put(181,30){$\alpha_6$}
\put(211,30){$\alpha_7$}
\put(125, 2){\circle*{5}}
\put( 65,22){\circle*{5}}
\put( 95,22){\circle*{5}}
\put(125,22){\circle*{5}}
\put(155,22){\circle*{5}}
\put(185,22){\circle*{5}}
\put(215,22){\circle*{5}}
\put(125,22){\line(0,-1){20}}
\put( 65,22){\line(1,0){30}}
\put( 95,22){\line(1,0){30}}
\put(125,22){\line(1,0){30}}
\put(155,22){\line(1,0){30}}
\put(185,22){\line(1,0){30}}
\end{picture}
\end{center}
If $p=2$, then the cuspidal character sheaves and almost characters have
been considered by Hetz \cite{Het2}. So assume from now on that $p\neq 2$. 
Let $\alpha_0\in\Phi$ be the unique root of maximal height and consider the
subsystem $\Phi_0\subseteq \Phi$ of type $A_3\times A_3\times A_1$
spanned by $\{\alpha_1,\alpha_2,\alpha_3,\alpha_5,\alpha_6,\alpha_7,
\alpha_0\}$. Again, the relevance of this particular example is 
that $\Phi_0$ occurs in the classification of cuspidal character sheaves 
on $\bG$; see \cite[Prop.~20.3]{L2d} (and also \cite[5.2]{S3}). Using
{\sf CHEVIE}, we find that the unique 
set $\Delta_0$ of simple roots in $\Phi_0\cap\Phi^+$ is given by 
\[\Delta_0=\{\; \alpha_1,\;\alpha_2,\;\alpha_3,\;\alpha_5,\;\alpha_6,\; 
\alpha_7,\;\alpha_0':=\alpha_1{+}2\alpha_2 {+}2\alpha_3{+}4\alpha_4{+}
3\alpha_5{+}2\alpha_6{+}\alpha_7\;\}.\]
Furthermore, there are four equivalence classes of pairs $(\Phi',w)\in\Xi$ 
under $\sim$, where $\Phi'=\Phi_0$; representatives $(\Phi_0,d_i)$, where 
$d_i\in\bW$ has minimal length in $\bW(\Phi_0)d_i$ for $i=1,2,3,4$, are 
given as follows.
\[\begin{array}{l@{\hspace{15pt}}c@{\hspace{15pt}}c} \hline 
d_i & \mbox{permutation} & \mbox{$\sigma_i'$-classes} \\\hline d_1=1_\bW & 
() & 50 \\ d_2=423143542654317654234 &  (1,6)(3,5)(4,7) & 10 \\ 
  d_3=4234542346542347654234  &  (1,7)(4,6) & 50 \\ 
  d_4=423143542314354654231435 42654 & (1,4)(3,5)(6,7) & 10 \\ \hline
\end{array}\]
(We use similar notational conventions as in the previous section.)

\begin{abs} {\bf The subgroup $\bH'=\langle \bT_0, \bU_\alpha\;(\alpha\in
\Phi_0)\rangle$.} \label{he7} As in \S \ref{he6}, every $t\in\bT_0$
has a unique expression $t=h(\xi_1,\ldots,\xi_7):=\prod_{1\leq i\leq 7} 
\alpha_i^\vee(\xi_i)$ where $\xi_i \in k^\times$ for $1\leq i\leq 7$. By 
\cite[Example~1.5.6]{gema}, we have 
\[Z(\bG)=\{h(1,\xi,1,1,\xi,1,\xi) \mid \xi=\pm 1\in k\} \cong \Z/2\Z.\]
A similar computation shows that 
\[ Z(\bH')=\{h(1,\pm 1,1,1,\xi,\xi^2,\xi^{-1}) \mid \xi\in k^\times, 
\xi^4=1\}\cong \Z/2\Z\times\Z/4\Z.\]
Since $q$ is not a power of $2$, we have $|Z(\bG)|=2$ and $|Z(\bH')|=8$. 
Given $t=h(1,\pm 1,1,1,\xi,\xi^2,\xi^{-1})\in Z(\bH')$ (where $\xi^4=1$), 
we have $C_\bG(t)=\bH'$ if and only if $\xi^2\neq 1$. The elements 
$h(1,-1,1,1,1,1,1)$ and $h(1,1,1,1,-1,1,-1)$ have a centraliser of 
type $D_6\times A_1$. Furthermore, one easily checks that 
\[ \dot{d}_2^{-1}t\dot{d}_2=zt,\qquad \dot{d}_3^{-1}t\dot{d}_3=t^{-1},
\qquad \dot{d}_4^{-1}t\dot{d}_4=zt^{-1},\]
where $z:=h\bigl(1,\xi^2,1,1,\xi^2,1,\xi^2\bigr) \in Z(\bG)$. 
These three relations show that all elements $t\in Z(\bH')$ such that
$C_\bG(t)=\bH'$ are conjugate in $\bG$. Thus, if $\cC$ denotes the 
$\bG$-conjugacy class of these elements, then 
\[ F(\cC)=\cC,\qquad \cC=\cC^{-1} \qquad\mbox{and}\qquad 
\cC=Z(\bG)\cC.\]
In order to see that $\cC$ is $F$-stable, we argue as follows. If $q\equiv 
1\bmod 4$, then $F(z)=z$ for all $z\in Z(\bH')$. On the other hand, if 
$q\equiv 3\bmod 4$, then $F(z)=z^{-1}$ for all $z\in Z(\bH')$ and, hence,
$Z(\bH')^F$ is a Klein four group. If $t\in Z(\bH')\cap \cC$ as above, 
then $F(t)=t^{-1}=\dot{d}_3^{-1} t\dot{d}_3 \in \cC$, as required.

As in \S \ref{se6}, we fix an element $s_1\in \cC^F$ and set 
$\bH_1':=C_\bG(s_1)$. We pick a regular unipotent element $u_1\in\bH_1'^F$ 
and let $\Sigma$ be the $\bG$-conjugacy class of $g_1:=s_1u_1$. Again,
we see that $\Sigma$ is $F$-stable and $Z(\bG)\Sigma=\Sigma=\Sigma^{-1}$. 
Furthermore, $A_\bG(g_1)\cong Z(\bH_1')$ has order~$8$ and 
\[A_\bG(g_1)=\langle \bar{s}_1,\bar{z}\rangle\quad \mbox{where $z$ 
is the non-trivial element of $Z(\bG)$}.\]
Now $F$ acts trivially on $A_\bG(g_1)$, regardless of the congruence 
class of $q$ modulo~$4$. Hence, the set $\Sigma^F$ always splits into 
$8$ conjugacy classes in $\bG^F$ (each with centraliser order 
$8q^7$), which are parametrised by the $8$ elements of $A_\bG(g_1)$. 
However, now it is less obvious whether we can choose $g_1\in\Sigma^F$ 
such that $g_1$ is conjugate to $g_1^{-1}$ in~$\bG^F$. We will come
back to this issue in \S \ref{g1e7}. (Since $Z(\bG)$ has 
order~$2$, we can not use the argument in Lemma~\ref{lemreg3}(b).)
\end{abs}

\begin{abs} {\bf Cuspidal unipotent character sheaves.} \label{ce7}
By an argument entirely analogous to that in \S \ref{ce6}
(but now using \cite[Prop.~20.5]{L2d} and its proof), we see
that there are two $F$-invariant cuspidal unipotent character sheaves
$A_1$ and $A_2$ on $\bG$. (Again, they are pulled back from 
$\tilde{\bG}=\bG/Z(\bG)$ via the canonical map $\pi\colon\bG\rightarrow
\tilde{\bG}$.) The local systems associated with $A_1$ and $A_2$
are one-dimensional; they correspond to linear characters $\psi_1$
and $\psi_2$ of $A_\bG(g_1)$ such that 
\[ \psi_1(\bar{s}_1)=\ii,\quad \psi_2(\bar{s}_1)=-\ii, \quad 
\psi_1(\bar{z})=\psi_2(\bar{z})=1\quad\mbox{for $z\in Z(\bH_1')^F$},\] 
where $\ii\in\K$ is fixed such that $\ii^2=-1$. (Recall that 
$\bar{g}_1=\bar{s}_1$.) Thus, $\psi_1$ and $\psi_2$ are completely 
determined, where $\psi_2$ is the complex conjugate of~$\psi_1$.
Furthermore, the roots of unity attached to $A_1$ and $A_2$ as in 
\S \ref{charf} are $\lambda_{A_1}=\ii$ and $\lambda_{A_2}=-\ii$. 
Using $\psi_1$ and $\psi_2$, we can now write down characteristic 
functions of $A_1$ and $A_2$, as in \S \ref{charf}. The values 
are given as follows, where $1\neq z\in Z(\bG)$.
\begin{center}
$\renewcommand{\arraystretch}{1.2} \renewcommand{\arraycolsep}{12pt} 
\begin{array}{|c|c|c|c|c|} 
\hline & \{\bar{1},\;\bar{z}\} & \{\bar{s}_1^2,\;\bar{s}_1^2\bar{z}\} 
& \{\bar{s}_1,\;\bar{s}_1\bar{z}\} &\{\bar{s}_1^{-1},\; \bar{s}_1^{-1}
\bar{z}\}\\
\hline \chi_{g_1,\psi_1} & q^{7/2} & -q^{7/2} & \ii q^{7/2} & 
-\ii q^{7/2} \\ \hline \chi_{g_1,\psi_2} & q^{7/2} & 
-q^{7/2} & -\ii q^{7/2} & \ii q^{7/2} \\ \hline  
\end{array}$
\end{center}
Note that, here, we have $\dim \bG- \dim \Sigma=\dim \bT_0=7$. 
We also assume that a square root $\sqrt{q}\in\K$ has been fixed (and
then $q^{n/2}:=\sqrt{q}^{\,n}$  for any $n\in \Z$).
\end{abs}

\begin{abs} {\bf Unipotent characters and almost characters.} \label{ue7}
Exactly as in \S \ref{ue6}, the unipotent characters of 
$\bG^F$ can be canonically identified with those of $\tilde{\bG}^F$,
where $\tilde{\bG}=\bG/Z(\bG)$. Again, they are parametrised by a 
certain set $X(\bW)$ (which only depends on $\bW$). We use the notation 
in the table on \cite[pp.~364--365]{LuB}; also note the special remarks
concerning type $E_7$ (and $E_8$) on \cite[p.~362]{LuB}. The unipotent 
almost characters are also parametrised by $X(\bW)$. The interesting 
cases for us are as follows, where $\xi=\ii\sqrt{q}\in\K$: 
\begin{align*}
R_{512_a'}=R_{(1,1)} &:= \textstyle{\frac{1}{2}}\bigl([512_a']+
[512_a] -E_7[\xi]-E_7[-\xi]\bigr),\\ R_{512_a}=R_{(1,\varepsilon)} &:= 
\textstyle{\frac{1}{2}}\bigl([512_a']+[512_a] +E_7[\xi]
+E_7[-\xi]\bigr),\\ R_{(g_2,1)} &:= 
\textstyle{\frac{1}{2}} \bigl([512_a']-[512_a] -E_7[\xi]
+E_7[-\xi]\bigr),\\ R_{(g_2,\varepsilon)} 
&:= \textstyle{\frac{1}{2}}\bigl([512_a']-[512_a] +E_7[\xi]
-E_7[-\xi]\bigr).
\end{align*}
Here, $512_a'$ and $512_a$ are irreducible characters of $\bW$; then 
$[512_a']$ and $[512_a]$ are the corresponding constituents
of $\mbox{Ind}_{\bB^F}^{\bG^F}(1)$; the characters $E_7[\pm \xi]$ 
are cuspidal unipotent. (Note also that the ``$g_2$'' in $(g_2,1)$ and
$(g_2,\varepsilon)$ has nothing to do with elements in $\bG^F$.) Now 
consider the two character sheaves $A_1$ and $A_2$ described above, with 
characteristic functions $\chi_{g_1,\psi_1}$ and $\chi_{g_1,\psi_2}$. By 
the main result of \cite[\S 4]{S3} (see also \cite[5.2]{S3}), there are 
scalars $\zeta,\zeta'\in\K$ of absolute value~$1$ such that
\[R_{(g_2,1)}=\zeta\chi_{g_1,\psi_1} \qquad \mbox{and}\qquad 
R_{(g_2,\varepsilon)}=\zeta' \chi_{g_1,\psi_2}.\]
(Again, in \cite[\S 4]{S3}, this is proved for $\tilde{\bG}$ 
but the discussion in \S \ref{ce7} shows that this also holds
for $\bG$, with $\psi_1$ and $\psi_2$ as above.)
By \cite[Table~1]{my03}, the characters $E_7[\xi]$ and $E_7[-\xi]$ 
are complex conjugate to each other, and their values lie in the field 
$\Q(\xi)$. Furthermore, all characters $[\phi]$ (where $\phi\in\Irr(\bW)$) 
have their values in $\Q(\sqrt{q})$; see \cite[Prop.~5.6]{my03}. We 
conclude that $R_{(g_2,1)}$ and $R_{(g_2, \varepsilon)}$ are complex 
conjugate to each other, and their values 
lie in $\Q(\ii, \sqrt{q})$. Since 
\[ R_{(g_2,1)}(g_1)=\zeta q^{7/2}\qquad\mbox{and}\qquad R_{(g_2,
\varepsilon)}(g_1)=\zeta' q^{7/2},\]
we can already conclude that $\zeta'=\overline{\zeta}$. 
\end{abs}

\begin{abs} {\bf On the choice of $g_1\in \Sigma^F$.} \label{g1e7}
We now come back to the issue of finding a ``good'' representative
$g_1\in \Sigma^F$. Recall that $Z(\bG)$ has order~$2$. By 
Example~\ref{correg2}, there are precisely two $\bG^F$-conjugacy 
classes $C,C'\subseteq \Sigma^F$ which have a non-empty intersection 
with $\bB^F\dot{w}_c\bB^F$. We let $g_1\in C\cup C'$. To fix the 
notation, we let $C$ be the $\bG^F$-conjugacy class associated with 
$\Sigma$ (and the choice of $\dot{w}_c$) as in Corollary~\ref{correg1};
by the construction in Example~\ref{correg2}, $C'$ is the 
$\bG^F$-conjugacy class parametrised by $\bar{z}\in A_\bG(g_1)$. The 
table in \S \ref{ce7} now shows that $\chi_{g_1,\psi_1}$ and
$\chi_{g_1,\psi_2}$ have the same value on all elements in $C\cup C'$. 
Using the formula in Example~\ref{reguni}, the (known) character table 
of $\bW$ and the required computations concerning $\sigma'$-conjugacy 
classes in $\bW$, we find that the restrictions of the almost 
characters $R_{512_a'}$ and $R_{512_a}$ to $\Sigma^F$ are 
identically zero. Finally, the relations in \S \ref{ue7} 
can be inverted and yield the following relations: 
\begin{align*}
[512_a']&=\textstyle{\frac{1}{2}}\bigl(R_{512_a'}+R_{512_a}+
\zeta\chi_{g_1,\psi_1}+\zeta'\chi_{g_1,\psi_2}\bigr),\\
[512_a]&=\textstyle{\frac{1}{2}}\bigl(R_{512_a'}+R_{512_a}-
\zeta\chi_{g_1,\psi_1}-\zeta'\chi_{g_1,\psi_2}\bigr),\\
E_7[\xi]&=\textstyle{\frac{1}{2}}\bigl(-R_{512_a'}+R_{512_a}-
\zeta\chi_{g_1,\psi_1}+\zeta'\chi_{g_1,\psi_2}\bigr),\\
E_7[-\xi]&=\textstyle{\frac{1}{2}}\bigl(-R_{512_a'}+R_{512_a}+
\zeta\chi_{g_1,\psi_1}-\zeta'\chi_{g_1,\psi_2}\bigr).
\end{align*}
So the above discussion implies that $E_7[\xi](g)=(\overline{\zeta}-
\zeta)q^{7/2}$ for all $g\in C\cup C'$. Next recall that $\Sigma=
\Sigma^{-1}$. So, by Lemma~\ref{lemreg3}(a), we have $\{C^{-1},C'^{-1}
\}= \{C,C'\}$ which implies that $E_7[\xi](g^{-1})=(\overline{\zeta}-
\zeta)q^{7/2}$ for all $g\in C\cup C'$. But the left hand side also 
equals $\overline{E_7[\xi](g)}=(\zeta-\overline{\zeta})q^{7/2}$. Hence, we
conclude that $\zeta'=\overline{\zeta}=\zeta=\pm 1$ and $E_7[\xi](g)=0$ for 
all $g\in C\cup C'$. Then all the values of $[512_a]$, $[512_a']$ and 
$E_7[\pm \xi]$ on $\Sigma^F$ are determined (up to $\zeta=\pm 1$); 
see Table~\ref{vale7a}.
\end{abs}

\begin{table}[htbp] \caption{Values of $[512_a']$, $[512_a]$ and
$E_7[\pm \xi]$ on $\Sigma^F$} \label{vale7a}
\begin{center}
$\renewcommand{\arraystretch}{1.2} \renewcommand{\arraycolsep}{10pt} 
\begin{array}{|c|c|c|c|c|} \hline (\zeta=\zeta'=\pm 1)
& \{\bar{1},\;\bar{z}\} & \{\bar{s}_1^2,\;\bar{s}_1^2\bar{z}\} & 
\{\bar{s}_1,\;\bar{s}_1\bar{z}\} &\{\bar{s}_1^{-1},\; \bar{s}_1^{-1}
\bar{z}\}\\ \hline
[512_a'] & \zeta q^{7/2} & -\zeta q^{7/2} & 0 & 0 \\ \hline 
[512_a] & -\zeta q^{7/2} & \zeta q^{7/2} & 0 & 0  \\ \hline 
E_7[\xi] & 0 & 0 & -\ii \zeta q^{7/2} & \ii \zeta q^{7/2} \\\hline 
E_7[-\xi] & 0 & 0 & \ii \zeta q^{7/2} &-\ii \zeta q^{7/2} \\\hline 
\multicolumn{5}{l}{\text{(The classes labelled by $\bar{1}$ and $\bar{z}$
correspond to $C$ and $C'$.)}}
\end{array}$
\end{center}
\end{table}

Now, how important is it to determine the scalar $\zeta=\pm 1$ exactly? 
The above expressions for $E_7[\pm \xi]$ show that $E_7[\xi](g)=E_7[-\xi]
(g)\in\Z$ for all $g\in\bG^F\setminus \Sigma^F$. In other words, the two 
characters $E_7[\xi]$ and $E_7[-\xi]$ can only (!) be distinguished by 
their values on elements in $\Sigma^F$, where they are given by 
Table~\ref{vale7a}. The same is also true for $[512_a']$ and $[512_a]$.
Thus, up to simultaneously exchanging the names of $[512_a']$,$[512_a]$ 
and of $E_7[\xi]$, $E_7[-\xi]$, we could assume ``without loss of 
generality'' that $\zeta=1$. For most applications of character theory, 
this is entirely sufficient.~---~However, it is actually possible to 
determine~$\zeta$ exactly.

\begin{prop} \label{hecke7} Recall that $g_1\in C\cup C'$ where $C$ and 
$C'$ are the two $\bG^F$-conjugacy classes that are contained in 
$\Sigma^F$ and have a non-empty intersection with $\bB^F\dot{w}_c
\bB^F$. Then $\zeta=\zeta'=1$, that is, $R_{(g_2,1)}=\chi_{g_1,\psi_1}$ 
and $R_{(g_2,\varepsilon)}=\chi_{g_1, \psi_2}$.
\end{prop}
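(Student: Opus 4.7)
The strategy is to apply the Hecke-algebra identity of Remark~\ref{hecke}(b) at $g=g_1$ in order to pin down the sign of $\zeta$; we have already shown that $\zeta=\zeta'=\pm 1$, so only the sign remains to be determined.

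Because $Z(\bG)\cong \Z/2\Z$, $F$ acts trivially on $A_\bG(g_1)$, and the natural map $Z(\bG)\to A_\bG(g_1)$ is injective, the hypotheses of Example~\ref{correg2} are satisfied and the classes $C_1,\ldots,C_r$ appearing in Remark~\ref{hecke}(b) are precisely $\{C,C'\}$, so $r=2$. Using $|C_\bG(g_1)^F|=8q^7$ (noted in \S\ref{he7}), Remark~\ref{hecke}(b) at $g=g_1$ becomes
\[
\sum_{\phi\in\Irr(\bW)}\phi^{(q)}(T_{w_c})\,[\phi](g_1)\;=\;4q^7.
\]

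The left-hand side will be evaluated with the help of {\sf CHEVIE}. The Hecke algebra character values $\phi^{(q)}(T_{w_c})$ at a Coxeter element $w_c$ are well-known polynomials in $q$. For the character values $[\phi](g_1)$, after conjugation we may assume $s_1\in \bT_0^F$; then $u_1$ is regular unipotent in $\bH':=C_\bG(s_1)$ of type $A_3\times A_3\times A_1$, and Example~\ref{greenp} reduces $R_\phi^\bG(g_1)$ to a sum $\sum_{\psi\in\Irr(\bW_{s_1})} m(\psi,\phi)\,R_\psi^{\bH'}(u_1)$, in which each Green function value $R_\psi^{\bH'}(u_1)$ on the regular unipotent~$u_1$ is explicitly known. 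Inverting the Fourier transform within each Lusztig family of $\bG^F$ then produces every $[\phi](g_1)$ as an explicit element of $\Z[\sqrt{q}]$, with the sole exceptions of $[512_a']$ and $[512_a]$, whose values $\pm\zeta q^{7/2}$ (from Table~\ref{vale7a}) carry the unknown sign.

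Substituting these values produces an identity $P(q)+\zeta\cdot Q(q)=4q^7$ with $P$ and $Q$ explicit, and with $Q$ equal to a nonzero multiple of $\bigl(\phi_{512_a'}^{(q)}(T_{w_c})-\phi_{512_a}^{(q)}(T_{w_c})\bigr)\,q^{7/2}$; since $\zeta\in\{\pm 1\}$, only $\zeta=1$ is consistent, which together with $\zeta'=\overline{\zeta}$ gives also $\zeta'=1$. The main obstacle is the concrete bookkeeping: enumerating those $\phi\in\Irr(\bW)$ for which $[\phi](g_1)\neq 0$, computing $P$ and $Q$ explicitly in {\sf CHEVIE}, and verifying that $Q\neq 0$. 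Conceptually, the point is that only the Lusztig family of $\bG^F$ containing the cuspidal characters $E_7[\pm\xi]$ introduces any $\zeta$-dependence, and within that family only $[512_a']$ and $[512_a]$ lie in the principal series; this localises the sign information in a tractable part of the formula.
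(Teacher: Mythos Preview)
Your overall plan---evaluate the identity of Remark~\ref{hecke}(b) and isolate the $\zeta$-dependence in the two characters $[512_a']$, $[512_a]$---is the same as the paper's, but there is a genuine gap in how you propose to finish.

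You evaluate only at $g_1\in C\cup C'$, obtaining a single equation $P(q)+\zeta\cdot Q(q)=4q^7$, and then assert that $P(q)$ is ``explicit''. To compute $P(q)$ you would need $[\phi](g_1)$ for every $\phi\in\Irr(\bW)$, and for $\phi$ in a non-singleton family $\cF$ this means knowing $R_x(g_1)$ for \emph{all} $x\in\cF$, not only for $x=x_{\phi'}$ with $\phi'\in\Irr(\bW)$. In type $E_7$ there are several families containing unipotent characters lying in non-principal Harish-Chandra series (coming from cuspidal unipotent characters of Levi subgroups of types $D_4$ and $E_6$); the corresponding non-uniform almost characters $R_x$ are not accessible via the Green-function formula you invoke, and you give no argument for their values at $g_1$. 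So ``inverting the Fourier transform'' does not produce $[\phi](g_1)$ as claimed. (A minor related point: when $q\equiv 3\bmod 4$ the semisimple part $s_1$ cannot be taken in $\bT_0^F$, so Example~\ref{greenp} is not directly applicable; one would have to use Lemma~\ref{lem3} instead.)

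The paper sidesteps this entirely. Writing $[\phi]=[\phi]^\circ+\alpha_1(\phi)R_{(g_2,1)}+\alpha_2(\phi)R_{(g_2,\varepsilon)}$ and $D(g)=\sum_\phi\phi^{(q)}(T_{w_c})[\phi]^\circ(g)$, the key observation is that for every $x\in X(\bW)\setminus\{(g_2,1),(g_2,\varepsilon)\}$ the almost character $R_x$ is orthogonal to $R_{(g_2,1)}$ and $R_{(g_2,\varepsilon)}$ and (by Lemma~\ref{samev}) constant on pairs $\{C_{2i-1},C_{2i}\}$; these two facts force the values of $R_x$ on $C_1,\ldots,C_8$ to have the pattern $x,x,x,x,y,y,y,y$, so $D$ takes a common value $D_0$ on $C_1\cup C_2\cup C_3\cup C_4$. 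Evaluating Remark~\ref{hecke}(b) at a second point $g\in C_3\cup C_4$ (where the right-hand side is $0$) gives $D_0-2\zeta q^7=0$, which together with $D_0+2\zeta q^7=4q^7$ yields $\zeta=1$ without ever computing $D_0$. That second evaluation point is the missing ingredient in your argument.
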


\begin{proof} Let us denote the $\bG^F$-conjugacy classes in 
Table~\ref{vale7a} by $C_1,\ldots,C_8$ (from left to right); in 
particular, $C=C_1$ and $C'=C_2$. We will now try to evaluate the 
formula in Remark~\ref{hecke}(b) for elements $g\in \Sigma^F$. For 
this purpose, we write $X(\bW)=X^\circ\cup \{x_1,x_2\}$ where 
$x_1=(g_2,1)$ and $x_2=(g_2,\varepsilon)$. Since every unipotent 
character of $\bG^F$ is a linear combination of unipotent almost 
characters, we have 
\[[\phi]=[\phi]^\circ+\alpha_1(\phi)R_{x_1}+\alpha_2(\phi)R_{x_2}
\qquad\mbox{for each $\phi\in\Irr(\bW)$},\]
where $\alpha_1(\phi),\alpha_2(\phi)\in\K$ and $[\phi]^\circ$ is a linear 
combination of $\{R_x\mid x\in X^\circ\}$. Setting 
\[ B:=\sum_{\phi\in \Irr(\bW)} \phi^{(q)}(T_{w_c})
\alpha_1(\phi) \qquad\mbox{and}\qquad D(g):=\sum_{\phi\in\Irr(\bW)} 
\phi^{(q)}(T_{w_c})[\phi]^\circ(g) \]
for $g\in\Sigma^F$, we obtain 
\[\sum_{\phi\in\Irr(\bW)} \phi^{(q)}(T_w)[\phi](g)
=D(g)+ B{\cdot}\bigl(R_{x_1}(g)+R_{x_2}(g)\bigr).\]
Now we note the following. Let $x\in X^\circ$ and consider the possible
values of $R_x$ on $C_1,\ldots,C_8$. Since $R_x$ is a linear combination
of unipotent characters, $R_x$ takes the same value on $C_{2i-1}$, 
$C_{2i}$ for $i=1,\ldots,4$ (see Lemma~\ref{samev}). Now $R_x$ is
orthogonal to both $R_{x_1}$ and $R_{x_2}$. Hence, since the values of
$R_{x_1}$ on $C_1,\ldots,C_8$ are $1,1,-1,-1,\ii,\ii,-\ii,-\ii$ 
and those of $R_{x_2}$ are $1,1,-1,-1,-\ii,-\ii,\ii,\ii$, we 
conclude that the values of $R_x$ must be $x,x,x,x,y,y,y,y$, for 
some $x,y\in \K$. What is important here is that $D(g)$ takes the same 
value on all $g\in C_1\cup C_2\cup C_3\cup C_4$; let us denote by $D_0$ 
this common value of $D(g)$ on $C_1\cup C_2\cup C_3\cup C_4$. 

Next, the explicitly known matrix relating unipotent characters and 
unipotent almost characters shows that 
\[\textstyle\alpha_1(512_a')=\alpha_2(512_a')=\frac{1}{2},\qquad
\alpha_1(512_a)=\alpha_2(512_a)=-\frac{1}{2},\]
and $\alpha_1(\phi)=\alpha_2(\phi)=0$ for all $\phi\neq 512_a',512_a$.
Furthermore, we have 
\[(512_a')^{(q)}(T_{w_c})=q^{7/2}\qquad\mbox{and}\qquad (512_a)^{(q)}
(T_{w_c})=-q^{7/2},\]
by known results on character values of Hecke algebras (see 
\cite[Example~9.2.9(b)]{gepf}; these values are readily available 
within {\sf CHEVIE} \cite{chevie}). This yields $B=q^{7/2}$. 
Furthermore, $R_{x_1}$, $R_{x_2}$ take value $\zeta q^{7/2}$ on elements 
in $C_1\cup C_2$, and value $-\zeta q^{7/2}$ on elements in $C_3\cup C_4$.
Hence, we obtain:
\[\sum_{\phi\in\Irr(\bW)} \phi^{(q)}(T_w)[\phi](g)=\left\{
\begin{array}{cl} D_0+2\zeta q^7 & \qquad\mbox{if $g\in C_1\cup C_2$},\\ 
D_0-2\zeta q^7 & \qquad\mbox{if $g\in C_3\cup C_4$}. \end{array}\right.\]
By Remark~\ref{hecke}(b), the left hand side equals $4q^7$ or $0$, 
according to whether $g\in C_1\cup C_2$ or $g\in \Sigma^F\setminus (C_1
\cup C_2)$. Thus, $0=D_0-2\zeta q^7$ and, consequently, $4q^7=D_0+2\zeta 
q^7=4\zeta q^7$. In particular, $\zeta=1$.
\end{proof}

\section{Cuspidal character sheaves in type $F_4$} \label{secf4}

Throughout this section, let $\bG$ be simple of type $F_4$; here we have 
$Z(\bG)=\{1\}$. Let $q=p^f$ (where $f\geq 1$) be such that $F\colon 
\bG \rightarrow \bG$ defines an $\F_q$-rational structure. Here, $\bG$ is 
of split type; thus, $\sigma=\mbox{id}_\bW$ and the permutation $\alpha 
\mapsto\alpha^\dagger$ of $\Phi$ is the identity. Let $\Delta=\{\alpha_1,
\alpha_2,\alpha_3,\alpha_4\}$ be the set of simple roots in $\Phi^+$, where
the labelling is chosen as follows.
\begin{center}
\begin{picture}(210,15)
\put( 61,12){$\alpha_1$}
\put( 91,12){$\alpha_2$}
\put(121,12){$\alpha_3$}
\put(151,12){$\alpha_4$}
\put( 65,04){\circle*{6}}
\put( 95,04){\circle*{6}}
\put(125,04){\circle*{6}}
\put(155,04){\circle*{6}}
\put( 65,04){\line(1,0){30}}
\put( 95,02){\line(1,0){30}}
\put( 95,06){\line(1,0){30}}
\put(125,04){\line(1,0){30}}
\put(105,02){$>$}
\end{picture}
\end{center}
Except for \S \ref{f4p23} (at the very end), we will assume 
that $p\neq 2,3$.
The following subsystems of $\Phi$ occur in the classification of
cuspidal character sheaves; see \cite[Prop.~21.3]{L2d} and its proof.
\[\begin{array}{c@{\hspace{5pt}}c@{\hspace{10pt}}
l@{\hspace{5pt}}c@{\hspace{1pt}}c} \hline \Phi' & \Delta' & d_i & 
\mbox{perm.} & \mbox{$\sigma_i'$-classes} \\ \hline 
A_2{\times} A_2 & \alpha_1,\alpha_3,\alpha_4, \alpha_1{+}3\alpha_2{+}4
\alpha_3{+}2\alpha_4&  d_1=1_\bW & () & 9 \\
& & d_2=232432 &  (1,4)(2,3) & 9 \\\hline
A_3{\times} A_1 & \alpha_1,\alpha_2,\alpha_4, \alpha_1{+}2\alpha_2{+}4
\alpha_3{+}2\alpha_4&  d_1=1_\bW & () & 10 \\
& & d_2=3234323 & (1,4) & 10 \\\hline
B_4 & \alpha_1,\alpha_2,\alpha_3, \alpha_2{+}2\alpha_3{+}2\alpha_4&  
d_1=1_\bW & () & 20 \\\hline
C_3{\times} A_1 & \alpha_2,\alpha_3,\alpha_4, 2\alpha_1{+}3\alpha_2{+}4
\alpha_3{+}2\alpha_4&  d_1=1_\bW & () & 20 \\\hline
\end{array}\]
(We use the same notational conventions as in the previous sections.)
As before, we begin by working out the center of $\bH'
=\langle \bT_0, \bU_\alpha\; (\alpha\in \Phi')\rangle$ in each case.

\begin{abs} {\bf The subsystem $\Phi'$ of type $A_2\times A_2$.} 
\label{a2a2} In this case, we have 
\[Z(\bH')=\{h(1,1,\xi,\xi^{-1})\mid \xi \in k^\times,\xi^3=1\}.\]
Given $t\in Z(\bH')$, we have $C_\bG(t)=\bH'$ if and only if $t\neq 1$. 
Furthermore, one checks that $\dot{d}_2^{-1}t\dot{d}_2=t^{-1}$. Hence,
the two elements $t\in Z(\bH')$ such that $C_\bG(t)=\bH'$ are conjugate 
in $\bG$. If $\cC$ denotes the $\bG$-conjugacy class of these elements, 
then $\cC=\cC^{-1}$; furthermore, $F(\cC)=\cC$. Indeed, if $q\equiv 
1\bmod 3$, then $F(t)=t$ for all $t\in Z(\bH')$. On the other hand, if 
$q\equiv 2\bmod 3$, then $F(t)=t^{-1}$ for all $t\in Z(\bH')$. If 
$t\in Z(\bH')\cap \cC$, then $F(t)=t^{-1}=\dot{d}_2^{-1}t\dot{d}_2 \in 
\cC$, as required.
\end{abs}

\begin{abs} {\bf The subsystem $\Phi'$ of type $A_3\times A_1$.} 
\label{a3a1} In this case, we have 
\[Z(\bH')=\{h(1,1,\xi^2,\xi)\mid \xi \in k^\times,\xi^4=1\}.\]
Given $t=h(1,1,\xi^2,\xi)\in Z(\bH')$ (where $\xi^4=1$), we have 
$C_\bG(t)=\bH'$ if and only if $\xi^2\neq 1$. (Note that the element
$h(1,1,1,-1)$ has a centraliser of type~$B_4$.) Furthermore, one checks
that $\dot{d}_2^{-1}t\dot{d}_2=h(1,1,\xi^2,\xi^{-1})=t^{-1}$. Hence, 
the two elements $t\in Z(\bH')$ such that $C_\bG(t)=\bH'$ are conjugate 
in $\bG$. If $\cC$ denotes the $\bG$-conjugacy class of these elements, 
then $\cC=\cC^{-1}$; furthermore, $F(\cC)=\cC$. Indeed, if $q\equiv 
1\bmod 4$, then $F(t)=t$ for all $t\in Z(\bH')$. On the other hand, if 
$q\equiv 3\bmod 4$, then $F(t)=t^{-1}$ for all $t\in Z(\bH')$. If 
$t\in Z(\bH')\cap \cC$, then $F(t)=t^{-1}=\dot{d}_2^{-1}t\dot{d}_2 \in 
\cC$, as required.
\end{abs}

\begin{abs} {\bf The subsystem $\Phi'$ of type $B_4$.} \label{b4}
In this case, we have 
\[Z(\bH')=\{h(1,1,\xi,1)\mid \xi \in k^\times,\xi^2=1\}\cong \Z/2\Z.\]
Hence, if $s_1:=h(1,1,-1,1)\in Z(\bH')$, then $C_\bG(s_1)=\bH'$. (This
element $s_1$ is conjugate to the element $h(1,1,1,-1)$ mentioned above.)
\end{abs}

\begin{abs} {\bf The subsystem $\Phi'$ of type $C_3\times A_1$.} 
\label{c3a1} In this case, we have 
\[Z(\bH')=\{h(1,\xi,1,\xi)\mid \xi \in k^\times,\xi^2=1\}\cong \Z/2\Z.\]
Hence, if $s_1:=h(1,-1,1,-1)\in Z(\bH')$, then $C_\bG(s_1)=\bH'$. 
\end{abs}

\begin{abs} {\bf The unipotent class $F_4(a_3)$.} \label{f4a3} There are
seven cuspidal character sheaves on $\bG$; they are all unipotent
and $F$-invariant; see \cite[1.7]{L7} and \cite[\S 6, \S 7]{S2}. Let
$A$ be any of these seven cuspidal character sheaves and write
$A=\mbox{IC}(\Sigma,\cE)[\dim \cE]$ where $\Sigma$ is an $F$-stable
conjugacy class of $\bG$ and $\cE$ is an $F$-invariant irreducible local 
system on $\Sigma$. In all cases, $\cE$ is one-dimensional, so condition 
($*$) in \S \ref{charf} is satisfied. Furthermore, let $\cO_0$ be 
the conjugacy class of the unipotent part of an element in $\Sigma$. 
Then $\cO_0$ is the unipotent class denoted by $F_4(a_3)$ in 
\cite[\S 5]{Spa1}. (The identification of $\cO_0$ follows from 
\cite[Prop.~1.16]{L7} if $p$ is sufficiently large; by Taylor \cite{Tay15},
it is enough to assume that $p>3$. For small values of $p$, one 
can also use explicit computations in a matrix realisation of~$\bG$ 
and the results of Lawther \cite{law}.) We have $A_\bG(u)\cong \fS_4$ 
for $u\in\cO_0$, and there exists some $u\in \cO_0^F$ such that $F$ acts
trivially on $A_\bG(u)$; see Shoji \cite{Shof4}. Thus, $\cO_0^F$ splits
into five conjugacy classes in $\bG^F$, corresponding to the five conjugacy
classes of~$\fS_4$. As in \cite{Shof4}, we denote representatives of 
those five $\bG^F$-conjugacy classes by $x_{14},\ldots, x_{18}$. We 
have $|C_\bG^\circ(x_i)^F|=q^{12}$ in each case; furthermore, 
\begin{alignat*}{2}
A_\bG(x_{14})^F &\cong \fS_4, && \qquad \mbox{(cycle type $(1111)$)},\\
A_\bG(x_{15})^F &\cong D_8,&& \qquad \mbox{(cycle type $(22)$)},\\
A_\bG(x_{16})^F &\cong \Z/2\Z\times \Z/2\Z,&& \qquad \mbox{(cycle 
type $(211)$)},\\\ A_\bG(x_{17})^F &\cong \Z/4\Z,&& \qquad \mbox{(cycle 
type $(4)$)},\\ A_\bG(x_{18})^F &\cong \Z/3\Z,&& \qquad \mbox{(cycle 
type $(31)$)}.
\end{alignat*}
Thus, the five representatives $x_i$ ($i=14,\ldots,18$) can be 
distinguished from each other by the structure of the group $A_\bG(x_i)^F$.
Now let $n\in\Z$ be such that $p\nmid n$. Since $\cO_0$ is uniquely 
determined by its dimension, each $u\in\cO_0$ is conjugate to $u^n$ 
in~$\bG$; it then also follows that each $x_i$ is conjugate to $x_i^n$ 
in $\bG^F$, for $i=14,\ldots, 18$. We shall make use of this remark for
$n=2$ in the discussion below.
\end{abs}

Now we turn to the detailed description of the seven cuspidal character 
sheaves of $\bG$, where we follow Lusztig \cite[\S 20, \S 21]{L2d} and 
Shoji \cite[\S 6]{S2}. In each case, we will determine the scalar
$\zeta$ in the identity ($\clubsuit^\prime$); see \S \ref{ualm}.
We deal with the various cases in order of increasing difficulty.

\begin{abs} {\bf The cuspidal character sheaves $A_3$, $A_4$.} 
\label{csha34} Let $s_1\in\bG^F$ be semisimple such that $\bH_1'=
C_\bG(s_1)$ has a root system $\Phi'$ of type $A_2\times A_2$; recall 
from \S \ref{a2a2} that $Z(\bH_1') \cong \Z/3\Z$ and this is 
generated by~$s_1$. Let $u_1\in\bH_1'^F$ be a regular unipotent 
element and $\Sigma$ be the conjugacy class of $g_1:=s_1u_1$. As in 
\S \ref{se6}, one sees that $\Sigma=\Sigma^{-1}$ and that 
$A_\bG(g_1)\cong \Z/3\Z$ is generated by the image $\bar{g}_1$ of $g_1$ 
in $A_\bG(g_1)$. By \cite[(6.2.4)(c)]{S2}, there are two cuspidal character
sheaves $A_i=\mbox{IC}(\Sigma,\cE_i)[\dim \Sigma]$ where $i=3,4$. 
Let $1\neq \theta\in \K^\times$ be a fixed third root of unity. Then 
$\cE_3$ corresponds to the linear character $\psi_3 \colon A_\bG(g_1)
\rightarrow \K^\times$ such that $\psi_3(\bar{g}_1)=\theta$ and $\cE_4$
corresponds to the linear character $\psi_4\colon A_\bG(g_1)\rightarrow 
\K^\times$ such that $\psi_4(\bar{g}_1)=\theta^2$. By \cite[(6.4.1)]{S2}, 
$A_3$ is parametrised by $(g_3,\theta)\in X(\bW)$ and $A_4$ is
parametrised by $(g_3,\theta^2)\in X(\bW)$. By the main result of 
\cite[\S 6]{S2}, there are scalars $\zeta,\zeta'\in\K$ of absolute 
value~$1$ such that $R_{(g_3,\theta)}=\zeta\chi_{g_1,\psi_3}$ and 
$R_{(g_3,\theta^2)}=\zeta'\chi_{g_1,\psi_4}$, where the almost characters
$R_{(g_3,\theta)}$ and $R_{(g_3,\theta^2)}$ are defined as the following
linear combinations of unipotent characters:
\begin{align*}
R_{(g_3,\theta)} & :=\textstyle\frac{1}{3}\bigl([\phi_{12,4}]+
F_4^{\rm I\!I}[1]-[\phi_{6,6}']-[\phi_{6,6}'']+2F_4[\theta]-
F_4[\theta^2]\bigr),\\
R_{(g_3,\theta^2)} & :=\textstyle\frac{1}{3}\bigl([\phi_{12,4}]+
F_4^{\rm I\!I}[1]-[\phi_{6,6}']-[\phi_{6,6}''] 
-F_4[\theta]+2F_4[\theta^2]\bigr).
\end{align*}
Here, we use the notation in \cite[p.~479]{Ca2}, with analogous conventions
as in the previous sections. Thus, $\phi_{12,4}$ etc.\ are irreducible
characters of $\bW$; then $[\phi_{12,4}]$ etc.\ are the corresponding
irreducible constituents of $\mbox{Ind}_{\bB^F}^{\bG^F}(1)$; characters
denoted like $F_4^{\rm I\!I}[1]$ are cuspidal unipotent. (In this section
we refer to \cite{Ca2} instead of \cite{LuB}, because the full $21\times 
21$ Fourier matrix related to type $F_4$ is printed on \cite[p.~456]{Ca2}, 
and that matrix will be needed for several arguments below.) By 
Lemma~\ref{lemreg3}(b), we can choose $g_1\in \Sigma^F$ to be conjugate to 
$g_1^{-1}$ in $\bG^F$. By an argument analogous to that in 
\S \ref{ue6}, one sees that $R_{(g_3,\theta)}$ and $R_{(g_3,
\theta^2)}$ are complex conjugate to each other. So we conclude that 
\[\zeta=\zeta'=\pm 1\qquad\mbox{and}\qquad R_{(g_3,\theta)}(g_1)=
R_{(g_3,\theta^2)}(g_1)=\zeta q^2.\]
Inverting the matrix relating unipotent characters and unipotent almost
characters, we obtain the following relation:
\[F_4[\theta]=\textstyle\frac{1}{3}\bigl(R_{(12,4)}+R_{(1,\lambda^3)}
-R_{(6,6)'}-R_{(6,6)''}+2R_{(g_3,\theta)}-R_{(g_3,\theta^2)}\bigr),\]
where we just write, for example, $R_{(12,4)}$ instead of $R_{\phi_{(12,
4)}}$. Using {\sf CHEVIE} and the formula in Example~\ref{reguni}, we 
find that 
\[ R_{(12,4)}(g_1)=R_{(6,6)''}(g_1)=0, \qquad R_{(6,6)'}(g_1)=1.\]
By \cite[(6.2.2)]{S2}, the pair $(1,\lambda^3)$ also parametrises
a cuspidal character sheaf, which will be supported on a conjugacy
class distinct from $\Sigma$. By the main result of \cite[\S 6]{S2}, 
a characteristic function of that character sheaf equals $R_{(1,
\lambda^3)}$, up to multiplication by a scalar. Hence, we have
$R_{(1,\lambda^3)}(g_1)=0$ and we obtain $F_4[\theta](g_1)=
\frac{1}{3}(-1+\zeta q^2)$. Since the left hand side is an algebraic 
integer, this forces that $\zeta=1$. Thus, we have shown that 
\begin{equation*}
R_{(g_3,\theta)}=\chi_{g_1,\psi_3}\qquad\mbox{and}\qquad 
R_{(g_3,\theta^2)}=\chi_{g_1,\psi_4};\tag{a}
\end{equation*}
recall that, here, we fixed $g_1\in\Sigma^F$ such that $g_1$ is conjugate
to $g_1^{-1}$ in $\bG^F$. The values of $F_4[\theta]$ and $F_4[\theta^2]$
on $\Sigma^F$ are given by the following table.
\begin{center}
$\renewcommand{\arraystretch}{1.2} \renewcommand{\arraycolsep}{4pt} 
\begin{array}{|c|c|c|c|} 
\hline & \bar{1} & \bar{s}_1 & \bar{s}_1^2 \\ \hline F_4[\theta] & 
\frac{1}{3}(q^2-1) & \frac{1}{3}(q^2-1)+q^2\theta & \frac{1}{3}(q^2-1)+
q^2\theta^2 \\ \hline 
F_4[\theta^2] & \frac{1}{3}(q^2-1) & \frac{1}{3}
(q^2-1)+q^2\theta^2 & \frac{1}{3}(q^2-1)+q^2\theta \\ \hline
\end{array}$
\end{center}
This table also shows that $g_1\in\Sigma^F$ is uniquely determined
(up to $\bG^F$-conjugation) by the property that $g_1$ is conjugate
to $g_1^{-1}$ in $\bG^F$.
\end{abs}

\begin{abs} {\bf The cuspidal character sheaves $A_5$, $A_6$.} 
\label{csha56} Let $s_1\in\bG^F$ be semisimple such that $\bH_1'=
C_\bG(s_1)$ has a root system $\Phi'$ of type $A_3\times A_1$; 
recall from \S \ref{a3a1} that $Z(\bH_1') \cong \Z/4\Z$ and
this is generated by~$s_1$. Let $u_1\in\bH_1'^F$ be a regular unipotent 
element and $\Sigma$ be the conjugacy class of $g_1:=s_1u_1$. As above, 
one sees that $\Sigma=\Sigma^{-1}$ and that $A_\bG(g_1)\cong \Z/4\Z$ is 
generated by $\bar{g}_1\in A_\bG(g_1)$. By \cite[(6.2.4)(d)]{S2}, there 
are two cuspidal character sheaves $A_i=\mbox{IC}(\Sigma,\cE_i)[\dim 
\Sigma]$ where $i=5,6$; here, $\cE_5$ corresponds to the linear 
character $\psi_5 \colon A_\bG(g_1) \rightarrow \K^\times$ such that
$\psi_5(\bar{g}_1)=\ii$ (where $\ii^2=-1$ in $\K$) and $\cE_6$ 
corresponds to the linear character $\psi_6\colon A_\bG(g_1) \rightarrow 
\K^\times$ such that $\psi_6(\bar{g}_1)=-\ii$. By \cite[(6.4.1)]{S2}, 
$A_5$ is parametrised by $(g_4,\ii)\in X(\bW)$ and $A_6$ is parametrised 
by $(g_4,-\ii)\in X(\bW)$. By the main result of \cite[\S 6]{S2}, there 
are scalars $\zeta,\zeta'\in\K$ of absolute value~$1$ such that 
$R_{(g_4,\ii)}=\zeta\chi_{g_1,\psi_5}$ and $R_{(g_4,-\ii)}=\zeta'
\chi_{g_1,\psi_6}$, where
\begin{align*}
R_{(g_4,\ii)} & :=\textstyle\frac{1}{4}\bigl([\phi_{12,4}]-[\phi_{9,6}']
+[\phi_{1,12}']-F_4^{\rm I\!I}[1]-[\phi_{9,6}'']-F_4^{\rm I}[1]\\
&\qquad\qquad\qquad\qquad + [\phi_{1,12}'']+ [\phi_{4,8}]+2F_4[\ii]-
2F_4[-\ii]\bigr);
\end{align*}
there is a similar expression for $R_{(g_4,-\ii)}$ where the
roles of $F_4[\ii]$ and $F_4[-\ii]$ are interchanged.
By Lemma~\ref{lemreg3}(b), we can choose $g_1\in\Sigma^F$ to be conjugate 
in $\bG^F$ to $g_1^{-1}$. As in \S \ref{csha34}, we conclude 
that $\zeta=\zeta'=\pm 1$. Inverting the matrix relating 
unipotent characters and unipotent almost characters, we obtain:
\begin{align*}
F_4[\ii]&=\textstyle\frac{1}{4}\bigl(R_{(12,4)}-R_{(9,6)'}
+R_{(1,12)'}-R_{(1,\lambda^3)}-R_{(9,6)''}\\&\qquad\qquad\qquad
-R_{(g_2',\varepsilon)}+ R_{(1,12)''}+ R_{(4,8)}+2R_{(g_4,\ii)}-
2R_{(g_4,-\ii)}\bigr).
\end{align*}
Using Example~\ref{reguni}, we find that $R_\phi(g)=0$ for all $g\in
\Sigma^F$ and all $\phi\in\Irr(\bW)$ occurring in the sum on the right 
hand side. Again, by \cite[(6.2.2)]{S2}, the pair $(g_2',\varepsilon)$ 
also parametrises a cuspidal character sheaf, which will be supported 
on a conjugacy class distinct from $\Sigma$. By the main result of 
\cite[\S 6]{S2}, a characteristic function of that character sheaf 
equals $R_{(g_2',\varepsilon)}$, up to multiplication by a scalar. Hence, 
we also have $R_{(g_2',\varepsilon)}(g)=0$ for all $g\in \Sigma^F$. 
A similar argument shows that $R_{(1,\lambda^3)}(g_1)=0$. This yields 
the following table for the values of $F_4[\ii]$ on $\Sigma^F$:
\begin{center}
$\renewcommand{\arraystretch}{1.2} \renewcommand{\arraycolsep}{10pt} 
\begin{array}{|c|c|c|c|c|} 
\hline & \bar{1} & \bar{g}_1^2 & \bar{g}_1 &\bar{g}_1^{-1} \\ \hline
F_4[\ii] & 0 & 0 & \zeta \ii q^2 & -\zeta \ii q^2 \\ \hline 
F_4[-\ii] & 0 & 0 & -\zeta \ii q^2 & \zeta \ii q^2 \\ \hline 
\end{array}$
\end{center}
We can now draw the following conclusions. Let $C_1,C_2,C_3,C_4$ be 
the four conjugacy classes of $\bG^F$ into which $\Sigma^F$ splits (not 
necessarily ordered as in the above table). We have $\Sigma=\Sigma^{-1}$, 
so taking inverses permutes the four classes. The table shows that we 
can arrange the notation such that $C_4=C_3^{-1}$, where $C_3=
\mbox{Sh}_\bG(C_1)$ and $C_4=\mbox{Sh}_\bG(C_2)$; see 
\S \ref{paracl}(a).  Since $g_1\in\Sigma^F$ 
is conjugate in $\bG^F$ to $g_1^{-1}$, this forces that $C_1=C_1^{-1}$, 
$C_2=C_2^{-1}$ and $g_1\in C_1\cup C_2$. By Lemma~\ref{lemreg3}(b), we 
can further fix the notation such that $C_1 \cap \bB^F\dot{w}_c
\bB^F\neq \varnothing$ and $C_2\cap \bB^F\dot{w}_c\bB^F=
\varnothing$. Then we claim:
\begin{equation*}
\zeta=\left\{\begin{array}{rl} 1 & \qquad \mbox{if $g_1\in C_1$},\\
-1 & \qquad \mbox{if $g_1\in C_2$}.\end{array}\right.\tag{a}
\end{equation*}
This is seen by an argument entirely analogous to the proof of
Proposition~\ref{hecke7}, based on the formula in 
Remark~\ref{hecke}(b). The data required for that argument
(that is, the constants $\alpha_1(\phi)$, $\alpha_2(\phi)$ and
the values $\phi^{(q)}(T_{w_c})$) are now given as follows. 
We have $\alpha_1(\phi)=\frac{1}{4}$ for $\phi\in\{\phi_{1,12}', 
\phi_{1,12}'', \phi_{4,8}, \phi_{12,4}\}$, and $\alpha_1(\phi)=0$ 
otherwise; furthermore, if $\alpha_1(\phi)\neq 0$, then 
$\phi^{(q)}(T_{w_c})=q^2$; we omit further details. 
\end{abs}

\begin{abs} {\bf The cuspidal character sheaf $A_1$.} \label{csha1}
Let $\Sigma$ be the unipotent class of $\bG$ denoted by $F_4(a_3)$,
as already introduced in \S \ref{f4a3}. We take $g_1:=x_{14}
\in\Sigma^F$; hence, $F$ acts trivially on $A_\bG(g_1) \cong \fS_4$. We 
also remarked in \S \ref{f4a3} that $g_1$ is conjugate in $\bG^F$ 
to $g_1^{-1}$. As in \cite[(6.2.4)(a)]{S2}, there is a cuspidal 
character sheaf $A_1=\mbox{IC}(\Sigma,\cE)[\dim \Sigma]$ where $\cE$ 
corresponds to the sign character $\mbox{sgn}\in \Irr(A_\bG(g_1))$. 
In \cite[\S 6]{S2}, it is not stated explicitly to which parameter in 
$X(\bW)$ the character sheaf $A_1$ corresponds, but this is easily 
found as follows, using the information already available from 
\S \ref{csha34}. (See also the argument in 
\cite[Lemma~8.8]{L3}.) We claim that $A_1$ is parametrised by 
$(1,\lambda^3) \in X(\bW)$. Assume, if possible, that this is not the 
case. By Shoji's results \cite{Sh82} on the Green functions of
$\bG^F$, we can compute $R_\phi(g_1)$ for any $\phi\in \Irr(\bW)$.
(These results are known to hold whenever $p\neq 2,3$; see 
\cite[Theorem~5.5]{S3} and \cite[\S 3]{myav}.) In  particular, we 
obtain $R_{(6,6)''}(g_1)=0$, $R_{(6,6)'}(g_1)=2q^4$ and $R_{(12,4)}
(g_1)=q^4$. Since $R_{(g_3,\theta)}$ and $R_{(g_3,\theta^2)}$ are zero 
on unipotent elements (see \S \ref{csha34}), we conclude that 
$F_4[\theta](g_1)=\frac{1}{3}(q^4-2q^4)=-q^4/3$, contradiction since 
$p\neq 3$.  Hence, $A_1$ is parametrised by $(1,\lambda^3)$. By the main 
result of \cite[\S 6]{S2}, there is a scalar $\zeta\in\K$ of absolute 
value~$1$ such that $R_{(1,\lambda^3)}=\zeta\chi_{g_1,\text{sgn}}$.
The exact expression of $R_{(1,\lambda^3)}$ as a linear combination
of $21$ unipotent characters is obtained from the Fourier matrix on
\cite[p.~456]{Ca2} and the list of labels for unipotent characters on
\cite[p.~479]{Ca2}; we will not print it here. 

It was first shown by Kawanaka \cite[\S 4]{K2} that $\zeta=1$, assuming
that $p,q$ are sufficiently large; Lusztig \cite[8.6, 8.12]{L3} shows this 
assuming that $q$ satisfies a certain congruence condition. Since Kawanaka's
results on generalised Gelfand--Graev representations are now known to hold 
whenever $q$ is a power of a good prime~$p$ (see Taylor \cite{Tay15}), 
we can conclude that $\zeta=1$ holds unconditionally (but recall our
standing assumption that $p>3$). 

We can also argue as follows. Consider again the formula for 
$F_4[\ii]$ in \S \ref{csha56}. Using Shoji's results
on Green functions, we can compute the values of $R_\phi$ on unipotent 
elements, for all~$\phi$ occurring in that formula. Furthermore, we 
have $R_{(g_2',\varepsilon)}(g_1)=R_{(g_4,\pm \ii)}(g_1)=0$. This 
yields that $F_4[\ii](g_1)=-\frac{1}{4}q^4(\zeta q^2-1)$. Since $g_1$ 
is $\bG^F$-conjugate to $g_1^{-1}$, we have $\zeta=\overline{\zeta}$. 
Since $F_4[\ii](g_1)$ is an algebraic integer, we must have $\zeta=1$.
\end{abs}

\begin{table}[htbp] \caption{Some character values on the unipotent
class $F_4(a_3)$} \label{tabf4a3}
\begin{center}
$\renewcommand{\arraystretch}{1.3} \renewcommand{\arraycolsep}{1pt} 
\begin{array}{|c|c|c|c|c|c|} \hline & x_{14} \;(1111) & x_{15} \;(22) 
& \;x_{16} \;(211) & x_{17} \;(4)& x_{18}\; (31)\\ \hline 
[\phi_{1,12}'] & \frac{1}{8}q^4(q^2{-}1){+}3q^4 &\frac{1}{8}q^4(q^2{-}1) & 
\frac{1}{8} q^4(1{-}q^2) & \frac{1}{8}q^4(1{-}q^2) & \frac{1}{8}q^4(q^2{-}1) 
\\ \hline [\phi_{1,12}''] & \frac{1}{8}q^4(q^2{-}1) &\frac{1}{8}q^4
(q^2{-}1){+} q^4 & \frac{1}{8} q^4(1{-}q^2) & \frac{1}{8}q^4(1{-}q^2) & 
\frac{1}{8}q^4(q^2{-}1)\\ \hline F_4[{-}1] & \frac{1}{4}q^4(1{-}q^2) 
&\frac{1}{4}q^4(1{-}q^2) & \frac{1}{4}q^4(q^2{-}1){+}q^4 & \frac{1}{4}
q^4(q^2{-}1) & \frac{1}{4}q^4(1{-}q^2) \\ \hline F_4[\ii] & 
\frac{1}{4}q^4(1{-}q^2) &\frac{1}{4}q^4(1{-}q^2) & 
\frac{1}{4}q^4(q^2{-}1) & \frac{1}{4}q^4(q^2{-}1){+}q^4 & 
\frac{1}{4}q^4(1{-}q^2) \\ \hline \end{array}$
\end{center}
\end{table}

\begin{abs} {\bf Character values on $F_4(a_3)$.} \label{f4a3b}
Once $R_{(1,\lambda^3)}$ has been determined, we can determine all 
character values on $\bG_{\text{uni}}^F$, where $\bG_{\text{uni}}$ 
denotes the unipotent variety of $\bG$. Indeed, the $25$ unipotent almost 
characters $R_\phi$ (for $\phi\in \Irr(\bW)$) remain linearly 
independent upon restriction to $\bG_{\text{uni}}^F$; they are 
explicitly computed by Shoji \cite{Sh82}. (As mentioned above, Shoji's 
results remain valid whenever $p\neq 2,3$.) Hence, together with the 
``cuspidal'' almost character $R_{(1,\lambda^3)}$, we obtain $26$ 
linearly independent functions on $\bG_{\text{uni}}^F$. Since there 
are also $26$ unipotent conjugacy classes of $\bG^F$ (see 
\cite[Theorem~2.1]{Shof4}), we obtain a basis for the space of class 
functions on $\bG_{\text{uni}}^F$. Note that all the remaining unipotent
almost characters are orthogonal to the functions in that basis, which
implies that they are identically zero on $\bG_{\text{uni}}^F$. In 
\S \ref{csha2} below, we shall need the values of some 
unipotent characters on~$\cO_0^F$, with $\cO_0$ as in \S \ref{f4a3}.
The values displayed in Table~\ref{tabf4a3} will allow us to distinguish 
the various $\bG^F$-conjugacy classes contained in $\cO_0^F$. These 
values are easily obtained from the functions {\tt UnipotentCharacters} 
and {\tt ICCTable} in Michel's version of {\sf CHEVIE} \cite{jmich}. 
\end{abs}

\begin{abs} {\bf The cuspidal character sheaf $A_7$.} \label{csha7}
Let $s_1:=h(1,1,-1,1)\in\bT_0^F$ and $\bH_1':=C_\bG(s_1)$; then $\bH_1'$
has a root system $\Phi'$ of type $B_4$ (see \S \ref{b4}); recall 
that $Z(\bH_1') \cong \Z/2\Z$ and this is generated by~$s_1$. Consider 
the natural isogeny $\beta\colon \bH_1' \rightarrow \overline{\bH}_1':=
\mbox{SO}_9(k)$ (defined over $\F_q$). Let $\cO$ be the unipotent class
of $\bH_1'$ such that the elements $\beta(u)\in \overline{\bH}_1'$, for 
$u\in \cO$, have Jordan type $(5,3,1)$. Let $\Sigma$ be the conjugacy 
class of $s_1u$, where $u\in \cO$. Now $\cO$ is $F$-stable and so 
$\Sigma$ is also $F$-stable. By Shoji \cite[Table~4]{Shof4}, and the
correction discussed by Fleischmann--Janiszczak \cite[p.~233]{flja}, we 
have:
\begin{equation*}
\begin{array}{l} \mbox{There exists an element $g_1\in \Sigma^F$ such 
that $A_\bG(g_1)$ is dihedral of}\\\mbox{order $8$ and $F$ acts trivially 
on $A_\bG(g_1)$; we have $|C_\bG(g_1)^F|=8q^8$.}\end{array} \tag{a}
\end{equation*}
Thus, the set $\Sigma^F$ splits into five
conjugacy classes in $\bG^F$, with centraliser orders $8q^8,8q^8,4q^4,4q^4,
4q^4$. So there are two possibilities for the $\bG^F$-conjugacy class
of~ $g_1$ as in (a). (We just choose one of them; this choice does not
affect the result at the end. By \S \ref{paracl}(a) and 
\cite[Chap.~II, Prop.~7.2]{DiMi0}, we also see that the two classes are
interchanged by the Shintani map $\mbox{Sh}_\bG$.) Now, by 
\cite[(6.2.4)(e)]{S2}, there is a cuspidal character sheaf $A_7=
\mbox{IC}(\Sigma,\cE)[\dim \Sigma]$ where $\cE$ corresponds to the 
sign character $\mbox{sgn}\in \Irr(A_\bG(g_1))$. By \cite[(6.2.2)]{S2}, 
$A_7$ is parametrised either by the pair $(g_2,\varepsilon)$ or by the
pair $(g_2',\varepsilon)$ in $X(\bW)$. We can easily fix this as 
follows. We note that the eigenvalue $\lambda_{A_7}=\mbox{sgn}
(\bar{g}_1)$ in \S \ref{charf} must be~$1$ since $\bar{g}_1$ 
is in the center of $A_\bG(g_1)$. But there are also certain eigenvalues 
for the almost characters, where $\lambda_{R_x}=-1$ for $x=(g_2,
\varepsilon)$, and $\lambda_{R_x}=1$ for $x=(g_2',\varepsilon)$; see 
\cite[(6.2.2)]{S2}. By the main result of \cite[\S 6]{S2}, there is 
a scalar $\zeta\in\K$ of absolute value~$1$ such that $R_x=\zeta
\chi_{(g_1,\text{sgn})}$ where $x\in\{(g_2,\varepsilon), (g_2',
\varepsilon)\}$ and where the eigenvalues of the character sheaves do 
match those of the almost characters (see \cite[4.6]{S3}). So we 
conclude that 
\begin{equation*}
\mbox{$A_7$ is parametrised by $(g_2',\varepsilon)\in X(\bW)$ and 
$R_{(g_2',\varepsilon)}=\zeta\chi_{g_1,\text{sgn}}$}.\tag{b}
\end{equation*} 
The exact expression of $R_{(g_2',\varepsilon)}$ as a linear combination
of $18$ unipotent characters is obtained from the Fourier matrix on
\cite[p.~456]{Ca2} and the list of labels for unipotent characters on
\cite[p.~479]{Ca2}; we will not print it here. We claim:
\begin{equation*}
\mbox{With $g_1$ as in (a), we have $\zeta=1$}.\tag{c}
\end{equation*}
This is seen as follows. We use again the following identity from 
\S \ref{csha56}:
\begin{align*}
F_4[\ii]&=\textstyle\frac{1}{4}\bigl(R_{(12,4)}-R_{(9,6)'}
+R_{(1,12)'}-R_{(1,\lambda^3)}-R_{(9,6)''}\\&\qquad\qquad\qquad
-R_{(g_2',\varepsilon)}+ R_{(1,12)''}+ R_{(4,8)}+2R_{(g_4,\ii)}-
2R_{(g_4,-\ii)}\bigr).
\end{align*}
Now all $R_\phi$ ($\phi\in \Irr(\bW)$) are rational-valued. Since 
$\overline{F_4[\ii]}=F_4[-\ii]$ and $\overline{R_{(g_4,\ii)}}=
R_{(g_4,-\ii)}$, we conclude that $R_{(g_2',\varepsilon)}$ is invariant 
under complex conjugation and, hence, $\zeta=\pm 1$. Now evaluate 
$F_4[\ii]$ on $g_1\in\Sigma^F$. Note that $R_{(1,\lambda^3)}$ and
$R_{(g_4,\pm \ii)}$ have support on conjugacy classes that are 
distinct from~$\Sigma^F$ and, hence, their value is zero on~$g_1$; 
see \S \ref{csha56} and \S \ref{csha1}. By Example~\ref{secf4g}, 
we obtain  
\begin{gather*}
R_{(9,6)''}(g_1)=R_{(4,8)}(g_1)=R_{(1,12)''}(g_1)=q^2,\\
R_{(12,4)}(g_1)= R_{(9,6)'}(g_1)= R_{(1,12)'}(g_1)=0.
\end{gather*}
(Recall that $g_1$ is chosen such that $|C_\bG(g_1)^F|=8q^8$.) Since
$R_{(g_2',\varepsilon)}(g_1)=\zeta q^4$, we obtain $F_4[\ii](g_1)=
\frac{1}{4}q^2(1-\zeta q^2)$. Since the left hand side is an algebraic 
integer, we deduce that $\zeta=1$. Thus, (c) is proved. Finally, we note:
\begin{equation*}
\mbox{If $g_1=s_1u_1$ is as in (a), then $u_1$ is $\bG^F$-conjugate
to $x_{14}$ or $x_{15}$}.\tag{d}
\end{equation*} 
Indeed, since $C_\bG(g_1)\subseteq C_\bG(g_1^2)=C_\bG(u_1^2)$ and
$u_1^2$ is $\bG^F$-conjugate to $u_1\in \cO_0$, we conclude that $8$ 
divides $|C_\bG(u_1)^F|$. Hence, the only possibilities are that
$u_1$ is $\bG^F$-conjugate to $x_{14}$ or $x_{15}$. I conjecture that
for one of the two possibilities of $g_1=s_1u_1$ as in (a), we do have
that $u_1$ is $\bG^F$-conjugate to $x_{14}$ (but the choice of that
$g_1$ may depend on $q\bmod 4$).
\end{abs}

\begin{abs} {\bf The cuspidal character sheaf $A_2$.} \label{csha2}
Let $s_1\in\bG^F$ be semisimple such that $\bH_1'=C_\bG(s_1)$ has a 
root system $\Phi'$ of type $C_3\times A_1$; recall from 
\S \ref{c3a1} that $Z(\bH_1') \cong \Z/2\Z$ and this is 
generated by~$s_1$. Now we have a natural isogeny $\beta \colon
\mbox{Sp}_4(k) \times \mbox{SL}_2(k)\rightarrow \bH_1'$ (defined over 
$\F_q$). Let $\cO$ be the unipotent class of $\bH_1'$ that corresponds 
to unipotent elements of Jordan type $(4,2)\times (2)$ under~$\beta$.
We start by picking any element $u_1\in\cO^F$ and let $\Sigma$ be the 
conjugacy class of $g_1:=s_1u_1$. We have $\dim \bG-\dim \Sigma=6$ 
and $|C_\bG(g_1)^F|=4q^6$; one easily sees that $\Sigma=\Sigma^{-1}$. 
Now there is some $1\neq a\in A_\bG(g_1)$ such that 
\[A_\bG(g_1)=\langle \bar{g}_1\rangle\times \langle a\rangle\cong
\Z/2\Z\times \Z/2\Z\qquad \mbox{(with trivial $F$-action)}.\]
By \cite[(6.2.4)(b)]{S2}, there is a cuspidal character sheaf $A_2=
\mbox{IC}(\Sigma,\cE)[\dim \Sigma]$ where $\cE$ corresponds to a
non-trivial $\psi \in \Irr(A_\bG(g_1))$ (further specified below). By 
\cite[(6.2.2)]{S2}, $A_2$ is parametrised either by the pair $(g_2,
\varepsilon)$ or by the pair $(g_2',\varepsilon)$ in $X(\bW)$. By 
\S \ref{csha7}(b), we conclude that $A_2$ must be parametrised 
by $(g_2,\varepsilon)$; in particular, $\psi(\bar{g}_1)=\lambda_{A_2}=-1$.
We can now also fix the element $a\in A_\bG(g_1)$ such that $\psi(a)=1$. 
By the main result of \cite[\S 6]{S2}, there is a scalar $\zeta\in\K$ of 
absolute value~$1$ such that $R_{(g_2,\varepsilon)}=\zeta\chi_{g_1,\psi}$, 
where
\begin{align*}
R_{(g_2,\varepsilon)}&:=\textstyle\frac{1}{4}\bigl([\phi_{12,4}]+
[\phi_{9,6}']-[\phi_{1,12}']-F_4^{\rm I\!I}[1]-2[\phi_{16,5}]
\\&\qquad\quad +2F_4[-1] +[\phi_{9,6}'']-F_4^{\rm I}[1]-
[\phi_{1,12}'']+ [\phi_{4,8}]\bigr).
\end{align*}
Let $C_1,C_2,C_3,C_4$ be the four $\bG^F$-conjugacy classes into which
$\Sigma^F$ splits (initially ordered in no particular way). For each~$i$,
we denote $C_i^{[2]}:=\{g^2\mid g\in C_i\}$. Writing $g_1=s_1u_1$ as 
above, we have $u_1\in\cO_0$; furthermore, $u_1,u_1^2$ are 
$\bG^F$-conjugate and so $g_1^2=u_1^2\in \cO_0$ (see 
\S \ref{f4a3}). We claim that the notation can be arranged such that 
\begin{equation*}
x_{14}\in C_1^{[2]}, \qquad x_{15}\in C_2^{[2]} \qquad \mbox{and}
\qquad x_{16}\in C_3^{[2]}= C_4^{[2]}.\tag{a}
\end{equation*}
Depending on how we choose $g_1\in \Sigma^F$, the scalar $\zeta$
is then determined as follows.
\begin{equation*}
\zeta=\left\{\begin{array}{rl} 1 & \qquad \mbox{if $g_1\in C_1\cup
C_2$},\\ -1 & \qquad \mbox{if $g_1\in C_3\cup C_4$}.
\end{array}\right.\tag{b}
\end{equation*}
This is proved as follows. Inverting the matrix relating unipotent 
characters and unipotent almost characters, we obtain:
\begin{align*}
F_4[-1]&=\textstyle\frac{1}{4}\bigl(R_{(12,4)}+R_{(9,6)'}
-R_{(1,12)'} -R_{(1,\lambda^3)}-2R_{(16,5)} \\&\qquad\quad 
+2R_{(g_2,\varepsilon)} +R_{(9,6)''}-R_{(g_2',\varepsilon)}
-R_{(1,12)''}+ R_{(4,8)}\bigr).
\end{align*}
Now we evaluate this on $g_1\in \Sigma^F$. By \S \ref{csha1} and \S 
\ref{csha7}, we have $R_{(1,\lambda^3)}(g_1)=R_{(g_2',\varepsilon)}(g_1)=0$.
By a computation entirely analogous to that in Example~\ref{secf4g}, 
we obtain $R_{(16,5)}(g_1)=q$ and 
\[ R_{(12,4)}(g_1)= R_{(9,6)'}(g_1)= R_{(9,6)''}(g_1)= R_{(1,12)'}(g_1)=
R_{(1,12)''}(g_1)=0;\]
this does not depend on how we choose $g_1\in\Sigma^F$. Since 
$R_{(g_2,\varepsilon)}$ takes the values $\zeta q^3,\zeta q^3, -\zeta 
q^3,-\zeta q^3$ on the representatives in $\Sigma^F$ parametrised by
$\bar{1},a, \bar{g}_1,a\bar{g}_1$, this yields the following values for 
$F_4[-1]$ on $\Sigma^F$.
\begin{center}
$\renewcommand{\arraystretch}{1.2} \renewcommand{\arraycolsep}{5pt} 
\begin{array}{|c|c|c|c|c|} \hline & \bar{1}&a&\bar{g}_1&a\bar{g}_1\\
\hline F_4[-1] & \frac{1}{2}q(\zeta q^2-1) & \frac{1}{2}q(\zeta q^2-1) & 
-\frac{1}{2}q(\zeta q^2+1) &-\frac{1}{2}q(\zeta q^2+1)  \\ \hline 
\end{array}$
\end{center}
By \cite[Table~1]{my03}, the character $F_4[-1]$ is rational-valued,
so we must have $\zeta=\pm 1$. Regardless of whether $\zeta$ equals 
$1$ or $-1$, two of the above values are $\frac{1}{2}q(q^2-1)$,
and two of them are $-\frac{1}{2}q(q^2+1)$. Thus, two of the above 
values are even integers, and two of them are odd integers. 
Now compare with Table~\ref{tabf4a3}: 
\[ F_4[-1](x_{16})\equiv 1\bmod 2 \qquad\mbox{and}\qquad 
F_4[-1](x_{i}) \equiv 0 \bmod 2 \quad \mbox{for $i\neq 16$}.\]
By a well-known fact from the general character theory of finite 
groups, we have $F_4[-1](g_1^2)\equiv F_4[-1](g_1) \bmod 2$.
Hence, if $g_1\in\Sigma^F$ is such that $F_4[-1](g_1)$ is odd, 
then $g_1^2$ must be $\bG^F$-conjugate to $x_{16}$. Since there are 
two $\bG^F$-conjugacy classes in $\Sigma^F$ on which the value of 
$F_4[-1]$ is odd, we conclude that $x_{16}\in C_i^{[2]}$ for two 
values of $i\in\{1,2,3,4\}$; we arrange the notation such that these 
two values are $i=3$ and $i=4$. Now choose $g_1\in\Sigma^F$ such 
that $g_1\in C_3\cup C_4$. Since $F_4[-1](g_1)$ is given by the
entry corresponding to $\bar{1}\in A_\bG(g_1)$ in the above table,
we conclude that $\frac{1}{2}q(\zeta q^2-1)$ must be odd and so 
$\zeta=-1$. Thus, (a) and (b) are proved as far as 
$C_3$ and $C_4$ are concerned. On the other hand, let us choose $g_1
\in\Sigma^F\setminus (C_3\cup C_4)$. Then $F_4[-1](g_1)=\frac{1}{2}q
(\zeta q^2-1)$ must be even and so $\zeta=1$. So all that remains to 
be done is to identity $i,j\in\{14,\ldots,18\}$ such that 
$x_i\in C_1^{[2]}$ and $x_j\in C_2^{[2]}$. For this purpose, we 
consider the characters $[\phi_{1,12}']$ and $[\phi_{1,12}'']$. 

Using the ingredients of the {\sf CHEVIE} function {\tt LusztigMapb} 
explained in \cite[\S 7]{jmich} (which relies on the theoretical fact 
that the indicator function of a $\bG^F$-conjugacy class is ``uniform'', 
see \cite[\S 8]{mylaus}), we can compute $\sum_{g \in \Sigma^F} \rho(g)$
for any $\rho\in \Unip(\bG^F)$. Since all elements in $\Sigma^F$
have the same centraliser order, we can actually compute the sum
of the four values of $\rho$ on $C_1,C_2,C_3,C_4$. Applying this to 
$\rho=[\phi_{1,12}']$, we find that the result is~$-q$. Consequently, 
the four values of $[\phi_{1,12}']$ on $C_1,C_2,C_3,C_4$ cannot all 
have the same parity. Hence, there exists some $g\in \Sigma^F$ such 
that $[\phi_{1,12}'](g)\equiv [\phi_{1,12}'](x_{14}) \bmod 2$. But 
then we also have 
\[[\phi_{1,12}'](g^2) \equiv [\phi_{1,12}'](g)\equiv 
[\phi_{1,12}'](x_{14}) \bmod 2.\]
Since $[\phi_{1,12}'](x_i) \not\equiv [\phi_{1,12}'](x_{14}) \bmod 2$
for $i\neq 14$ (see Table~\ref{tabf4a3}), we conclude that $g^2$ 
is $\bG^F$-conjugate to $x_{14}$. Thus, we can arrange the notation such 
that $x_{14} \in C_1^{[2]}$. Then a completely analogous argument using 
the character $[\phi_{1,12}'']$ shows that $x_{15} \in C_2^{[2]}$. Thus, 
(a) and (b) are proved. The above table of values also shows that the 
values of $F_4[-1]$ on the classes parametrised by $\bar{1}$ and 
$\bar{g}_1$ have a different parity; similarly for~$a$ and~$a\bar{g}_1$. 
Hence, we can fix the notation for $C_3$ and $C_4$ such that $C_3=
\mbox{Sh}_\bG(C_1)$ and $C_4=\mbox{Sh}_\bG(C_2)$ (see 
\S \ref{paracl}(a)). 

Finally, we remark that we can also obtain an explicit representative
in $\Sigma^F$. Indeed, using {\sf CHEVIE}, we can easily compute the
full $\bW$-orbit of $s_1$; by inspection, $s_1':=h(-1,-1,1,1) \in
\bT_0^F$ belongs to that orbit, that is, $s_1'$ is conjugate to~$s_1$ 
in~$\bG^F$. Using the explicit expression (in terms of Chevalley 
generators of~$\bG^F$) for $x_{16}$ in \cite[Table~5]{Shof4}, we can
check that $s_1'$ commutes with~$x_{16}$. Hence, we have $g_1:=
s_1'x_{16}\in \Sigma^F$; since $g_1=x_{16}^2$, we must have $\zeta=-1$ 
for this choice of~$g_1$.
\end{abs}

\begin{abs} {\bf The cases where $p=2,3$.} \label{f4p23} In the above
discussion, we assumed that $p\neq 2,3$. For $p=2$, the scalars
$\zeta$ in the identities $R_x=\zeta \chi_A$ have been determined 
by Marcelo--Shinoda \cite[\S 4]{MaSh} and \cite[\S 5]{padua}. Now 
assume that $p=3$. For those cuspidal character sheaves $A$ where the
corresponding conjugacy class $\Sigma$ is unipotent (there are three 
of them), the scalars $\zeta$ are also determined by 
\cite[\S 4]{MaSh}. By \cite[\S 7.2]{S2}, the remaining four cuspidal 
character sheaves are analogous to those denoted above by $A_2$, $A_5$, 
$A_6$ and $A_7$. One checks that the discussions in 
\S \ref{csha56}, \ref{csha7}, \ref{csha2} can be applied 
almost verbatim to the case $p=3$, and yield the same results. The
Green functions for $p=3$ are known by \cite{MaSh} (see also
\cite[\S 5]{ekay})).
\end{abs}

\medskip
\noindent {\bf Acknowledgements}. I am indebted to Jean Michel for much
help with his programs \cite{jmich}, for independently verifying some of
my computations, and for pointing out the reference Bonnaf\'e \cite{Bo2}. 
Many thanks are due to Lacri Iancu and Gunter Malle for a careful 
reading of the manuscript, which led to the correction of several
inaccuracies and to a number of improvements of the exposition. This 
article is a contribution to Project-ID 286237555~–~TRR 195~--~by the 
Deutsche Forschungsgemeinschaft (DFG, German Research Foundation).


\bigskip
\noindent Meinolf Geck\\
    FB Mathematik\\ Universit\"at Stuttgart\\
    Pfaffenwaldring 57\\ 70569 Stuttgart, Germany\\
     Email: meinolf.geck@mathematik.uni-stuttgart.de\\
\end{document}